\documentclass[11pt,a4paper]{article}
\usepackage{amsmath,cite}
\usepackage{amssymb}
\usepackage{epsfig}
\usepackage{psfrag}
\usepackage{latexsym}
\usepackage[mathcal]{eucal}
\usepackage{lscape}
\usepackage{comment}
\usepackage{color}
\usepackage{hyperref}
\usepackage{graphicx}
\usepackage{caption}
\usepackage{subcaption} 
\usepackage{titlesec}
\usepackage{todonotes}
\usepackage{atveryend}
\usepackage{float}
\usepackage[normalem]{ulem}
\makeatletter
\let\origcitation\citation
\AtEndDocument{\def\mycites{\@gobble}%
  \def\citation#1{\g@addto@macro\mycites{,#1}\origcitation{#1}}}
\AtVeryEndDocument{\typeout{***^^JCited keys: \mycites^^J***}}
\makeatother
\newcommand*\circled[1]{\tikz[baseline=(char.base)]{
            \node[shape=circle,draw,inner sep=0.5pt] (char) {#1};}}

\usepackage{comment}
\hypersetup{
    colorlinks=true,
    linkcolor=blue,
    filecolor=magenta,
    urlcolor=cyan,
    citecolor=red,
}
\newcommand{\ds}{\displaystyle}

\newcommand{\nexto}{\kern -0.54em}
\newcommand{\R}{\mathbb{R}}

\newcommand{\proofbox}{\hspace{\fill}{$\Box$}}
\newtheorem{algorithm}{Algorithm}[section]
\newtheorem{lemma}{Lemma}[section]
\newtheorem{theorem}{Theorem}[section]
\newtheorem{corollary}{Corollary}[section]

\newtheorem{remark}{Remark}[section]

\newenvironment{proof}{Proof.}{\proofbox}
\renewcommand{\theequation}{\arabic{equation}}

\newcommand{\argmin}{\operatornamewithlimits{argmin}}

\DeclareMathOperator\sign{sign}

\begin{document}

\title{\bf A Primal--Dual Penalty Algorithm and Optimal Control of the Double Integrator}

\author{Regina S. Burachik\thanks{School of Mathematical Sciences, Adelaide University, City Campus East, North Terrace, Adelaide, SA~5005, Australia.
Email: \texttt{regina.burachik@adelaide.edu.au}, \texttt{yalcin.kaya@adelaide.edu.au}, \texttt{xuemei.liu@mymail.unisa.edu.au}.} \quad\  C. Yal{\c c}{\i}n Kaya\footnotemark[1] \quad\ Xuemei Liu\footnotemark[1]}

\date{June 4, 2026}

\maketitle

\begin{quotation}
\begin{abstract}
\noindent We investigate the behaviour of a primal--dual penalty scheme applied to all possible instances, i.e., boundary conditions, of an optimal control problem involving the double integrator. The proposed duality-based framework enables us to derive closed-form expressions for the associated dual functions as well as for the optimal control variables. Furthermore, we provide a complete characterization of the least exact penalty parameter for this problem. The iteration dynamics of the algorithm are examined by constructing and analyzing the trajectories of the constraint functions for every admissible instance of the problem. Finally, we compare the two step-size rules employed in our primal--dual algorithm with two alternative Polyak-type step-size strategies.
\end{abstract}
\end{quotation}

\begin{verse}
{\em Key words}\/: {\sf Primal--dual algorithm, Optimal control, Double integrator, Augmented Lagrangian, Duality scheme, Nonsmooth optimization, Subgradient methods, Penalty function methods, Least exact penalty parameter.}
\end{verse}

\begin{verse} 
{\bf 2020 Mathematics Subject Classification:} {\sf 49M29; 49M30; 90C26; 90C90.}
\end{verse}

\pagestyle{myheadings}
\markboth{}{\sf\scriptsize A Primal--Dual Penalty Algorithm and Optimal Control of the Double Integrator\ \ by R.~S.~Burachik, C.~Y.~Kaya and X.~Liu}

\section{Introduction}

The {\em double integrator} is one of the simplest ODEs, given by $\ddot y(t) = u(t)$, where $\ddot{y} := d^2 y/dt^2$ and $u(t)$ is a specified ``forcing term''.  The solution to this ODE can be obtained by integrating $u(t)$ twice, hence the name ``double integrator''.  For a mechanical system, $y(t)$, $\dot{y}(t) := dy(t)/dt$ and $\ddot{y}(t)$ represent the position,  velocity, and acceleration at time $t$, respectively, of a unit point mass in rectilinear motion on a frictionless horizontal plane.  The double integrator also models the analogous rotational-mechanical and electrical systems~\cite{Wellstead2005}, and acts as a building block for cubic splines, which are variational curves $y(t)$ that minimize the squared ${\cal L}^2$-norm of their acceleration $\ddot{y}$~\cite{Dontchev1993, OpfObe1988}.

Because of its simplicity, the double integrator has long been used to illustrate both basic and new concepts in optimal control theory and to test the performance of numerical algorithms in solving optimal control problems; see, for example,~\cite{BauBurKay2019, BurCalKayMou2024, BurCalKayMou2026, BurKayMou2024, Kaya2020}, and the research-level text book~\cite{Locatelli2017} that is solely devoted to this model.  The role of the double integrator in optimal control is arguably similar to the role of the {\em Dahlquist test problem} in numerical analysis: the simple initial value problem $\dot y(t) := \lambda\,y(t)$, $y(0) = y_0$, whose solution can be written immediately as $y(t) = y_0\,e^{\lambda\,t}$ with $\lambda$ a constant parameter, is typically used to evaluate the stability of ODE solvers (e.g., Runge--Kutta methods) and perform error analysis; see, for example,~\cite[Section~5.11]{BurFai2011} and \cite{CorKay2026, CorKayMoi2019}.

Our aim in the present paper is to study the behaviour of the {\em primal--dual penalty} (PDP) {\em algorithm} applied to the optimal control of the double integrator.  The PDP algorithm has recently been proposed in~\cite{BurKayLiu2023} (also see~\cite{Liu2022}) to solve (general) constrained infinite-dimensional optimization problems and, in particular, optimal control problems.  Although the working and efficiency of the PDP algorithm were illustrated via some challenging optimal control problems in~\cite{BurKayLiu2023}, motivated by the aforementioned practices, our objective here is to study in utmost detail the behaviour of the PDP algorithm applied to solve a test optimal control problem involving the double integrator.

Let $x_1 := y$ and $x_2 := \dot{y}$, and consider the {\em boundary conditions} $y(0) = s_0$, $y(1) = s_f$, $\dot{y}(0) = v_0$, $\dot{y}(1) = v_f$, for the double integrator over the time horizon $[0,1]$.  We aim to use the following {\em optimal control problem}, which is given in standard form, as a test problem for the PDP algorithm.
\[
\mbox{(P)}\,
  \left\{
    \begin{array}{rlll}
      \ds\min_{u(\cdot)} & \ds\frac{1}{2}\int_0^1 u^2(t)\,dt & \!\!\!=: \ds\frac{1}{2}\,\|u\|_{{\cal L}^2}^2 & \\[4mm]
      {\rm subject \, to} & \dot x_1(t)=x_2(t)\,, & x_1(0)=s_0\,, & x_1(1)=s_f\,, \\[2mm]
       & \dot x_2(t)=u(t)\,, & x_2(0)=v_0\,, & x_2(1)=v_f\,,
    \end{array}
  \right.
\]
where the functions $x_1$ and $x_2$ are called the {\em state variables} and the function $u$ the {\em control variable}.  Problem~(P) is concerned with finding the {\em optimal control} $u$ that minimizes the {\em functional} expressed as the squared ${\cal L}^2$-norm of $u$, denoted by $\|u\|_{{\cal L}^2}^2$, subject to the double integrator (dynamics) and boundary conditions.  Problem~(P) is traditionally referred to as a {\em minimum-energy control problem} in the optimal control literature 
and is a favourite example in lecture courses and text books on optimal control.  In fact, the solution to Problem~(P) can be easily written down via the calculus of variations: the resulting optimal control $u(t)$ is linear in $t$ and so the corresponding $x_1(t)$ (or $y(t)$) after integrating $u(t)$ twice is cubic in $t$.  In fact, $x_1(t)$ obtained by solving~(P) is an example of a {\em variational curve} (albeit simple) in the approximation theory and differential geometry (see Section~\ref{sec:solution} for the detailed solution).

The PDP algorithm exploits a generalized Lagrangian duality scheme for infinite-dimensional constrained optimization problems, a scheme where the classical Lagrangian is augmented by a general penalty term, as described and studied in \cite{BurachikXuemei2023, BIMinexact2013}.  More details on augmented Lagrangian duality and its properties can be found in \cite{BurachikXuemei2023, BurKayLiu2023,BIMinexact2013}, as well as in \cite{Liu2022}.  When the PDP algorithm is applied to solve Problem~(P), for which we have the analytical solution, the steps of the algorithm are likely to reveal new properties of the problem, especially those properties arising from duality (see Contributions C1--C3 below). Moreover, we can also better understand the behaviour of the algorithm itself (see Contribution C4 below).

Next, we describe the theoretical framework for studying Problem~(P).  Let $H$ be a Hilbert space. Consider the following infinite-dimensional equality-constrained optimization problem.
\begin{equation}\label{eq:generalPr}
\min_{u\in H}\ \varphi(u)\ \ {\rm  s.t. }\ \ h(u)=0\,,
\end{equation}
where $u$ is the {\em primal variable}, the functional $\varphi:H\to\R$ is lower semi-continuous, and $h:H\to H$ is some given continuous mapping.  We will show in Section~\ref{sec:reformulation} that Problem~(P) can be reformulated as the general problem expressed in~\eqref{eq:generalPr}.  

As in \cite{BurKayLiu2023}, the type of augmented Lagrangian we consider for~\eqref{eq:generalPr} is an extension of the augmented Lagrangian in \cite{BFK2014,BKdsg} to infinite dimensions. The infinite-dimensional problem in~\eqref{eq:generalPr} has recently been studied in \cite{BurachikXuemei2023}, where  optimal control problems are addressed via a primal--dual scheme and the augmented Lagrangian $\widetilde{L}:H\times H\times\R_+\to\R$ is defined for~\eqref{eq:generalPr} as follows:
\begin{equation}\label{eq:generalLagrn}
    \widetilde{L}(u,z,c):= \varphi(u)-\langle z,A(h(u))\rangle+ c\,\sigma(h(u))\,,
\end{equation}
where $z$ and $c$ are the {\em dual variables}, $A: H\to H$ is a suitable map, and $\sigma:H\to\R_+$ verifies $\sigma(v)=0$ if and only if $v=0$.  The resulting primal--dual scheme is paired in \cite{BurachikXuemei2023} with an epsilon subgradient technique that solves the dual problem.  An important advantage of the Lagrangian $ \widetilde{L}$ defined in \eqref{eq:generalLagrn} is that, unlike other available versions of the augmented Lagrangian (such as those proposed in \cite{Gdsg,BGKIdsg,BKMinexact2010,BIMdsg,BIMinexact2013, BurKay2012}), it induces a penalty function method for the choice $A=0$; also see~\cite{BurKayPri2022} for a penalty method emanating from the duality of~\eqref{eq:generalLagrn}, although it is in finite dimensions.

Indeed, we will take $A=0$ and $\sigma$ as a suitable norm in $H$. In this situation, $\sigma$ becomes the {\em penalty term function} and $c$ the {\em penalty parameter}.  Therefore, we redefine the augmented Lagrangian as $L:H\times \R_+\to\R$ such that
\begin{equation}\label{eq:pen_fun}
    L(u,c):= \varphi(u) + c\,\sigma(h(u))\,,
\end{equation}
where $c$ is the only dual variable.  The fact that the method constitutes a penalty approach justifies the term primal--dual penalty (PDP) algorithm for the resulting epsilon-subgradient method.  We thus can interpret the Lagrangian $L$ in~\eqref{eq:pen_fun} is a {\em penalty function}. 
The {\em dual function} corresponding to the Lagrangian $L$ is defined as
\begin{equation}\label{eq:dualFunction_P0}
    q(c):=\inf_{u\in H}\, L(u,c)\,,
\end{equation}
and the {\em dual problem} induced by these choices is
\begin{equation}\label{eq:dualProblem_P0}
  \max_{c\ge 0}\, q(c)\,.
\end{equation}

As mentioned above, the PDP algorithm uses an 
epsilon subgradient direction in its update rule (for the penalty parameter $c$) and improves (i.e., increases) the dual values in each iteration.  The main aim of the present paper is to further exploit this {\em duality induced} penalty update and to use the resulting method to better understand and discover new aspects of the PDP algorithm, as well as to establish the resulting duality features of the double integrator problem. The contributions of this paper can be summarized as follows.  
\begin{itemize}
\item[C1] For the primal--dual framework that we use, we first rewrite the differential equation constraints in Problem~(P) as integral constraints.  In particular, we derive and define the vector of constraint functions $h(u)$ (see~\eqref{eq:h_p1} below), and thus express the generalized augmented Lagrangian (or the penalty function) specific for Problem~(P) in~\eqref{eq:LagrnCarUnconstr}.  Subsequently, by minimizing this Lagrangian, we obtain and present closed-form expressions for the dual function in Theorem~\ref{th:dualCarUnconstrIFF}, on a case-by-case basis. The different cases arise by considering all instances of Problem~(P), classified by the possible choices of the initial data $s_0,s_f,v_0,v_f$.

\item[C2] A {\em least exact penalty parameter} is the smallest positive number $\overline{c}$ such that for every $c\ge\overline{c}$ a minimizer of the Lagrangian in~\eqref{eq:pen_fun} is a solution to Problem~(P).  In Corollary~\ref{cor:solution_exactParameter_p1}, we provide a closed form expression (given only in terms of the boundary conditions of the double integrator) for the least exact penalty parameter $\overline{c}$ for Problem~(P).

\item[C3] In Section~\ref{sec:constraints_p1}, we look at the behaviour of the constraint function and compute the dual function, for all possible choices of the initial conditions. The resulting trajectories of the constraint function are visualized in Figure~\ref{fig:subfiguresz1z2} and interpreted within various regions that are uniquely defined by the boundary conditions.  

\item[C4] In Sections~\ref{sec:iterates_p1} and \ref{sec: PolyakComparison}, we compute PDP iterates and make comparisons for different choices of the step-size used to update $c$.
\end{itemize}

\section{Preliminaries}
\label{sec:reformulation}

\subsection{A reformulation of Problem (P)}

We now reformulate (P) as an equality constrained problem, which allows us to write down the Lagrangian in \eqref{eq:pen_fun}. Indeed, using the ODE constraints and the properties of definite integrals, we can rewrite Problem~(P) as
\[
\mbox{(P1)}\,
  \left\{
    \begin{array}{rl}
      \min & {\ds\frac{1}{2}\int_0^1 u^2(t)\,dt} \\[4mm]
      {\rm subject \, to} &x_1(1)-s_f=0\,, \\[2mm]
       & x_2(1)-v_f=0\,, \\
    \end{array}
  \right.
\]
where $x_1(1)$ and $x_2(1)$ are affine functions of $u$ expressed from the ODEs in Problem~(P) as
\begin{equation} \label{eq:xP1reformulation}
\begin{array}{ll}
& \ds x_2(1)=v_0+ \int_0^1 u(\tau)\,d\tau\,, \\[5mm]
& \ds x_1(1)=s_0+\int_0^1 x_2(\tau)\,d\tau
 =s_0+\int_0^1\left[v_0+\int_0^\tau u(s)ds\right]d\tau\,.
\end{array}
\end{equation}
Next, define the constants
\begin{equation}\label{eq:r1_r2}
    r_1:= v_0+s_0-s_f\quad \hbox{and}\quad r_2:= v_0-v_f\,, 
\end{equation}
and the vector $r:=(r_1,r_2)^T\in \R^2$.  Equations~\eqref{eq:xP1reformulation} and \eqref{eq:r1_r2} suggest that the equality constraints can be defined through the vector mapping $h:{\cal L}^2([0,1];\R)\to \R^2$ as
 \begin{equation}\label{eq:h_p1}
 h(u):=  \left[ 
 \begin{array}{l}
     h_1(u)  \\[2mm]
     h_2(u) \\
 \end{array}
 \right] := \left[ 
 \begin{array}{l}
     r_1+\ds\int_0^1\left[ \int_0^\tau u(s)ds \right]d\tau   \\[5mm]
      r_2+ \ds\int_0^1 u(\tau)d\tau\\
 \end{array}
 \right].
 \end{equation}
 With this definition, it is direct to check that the constraints in Problem (P1) can be rewritten as $h(u)=0$.
Taking now $\varphi(u) := (1/2)\|u\|_{{\cal L}^2}^2$, we have reformulated (P1) as a special case of \eqref{eq:generalPr}.

The penalty function $\sigma$ is chosen as the finite-dimensional $\ell_1$-norm, denoted as $\|\cdot\|_1:=|\cdot|+|\cdot|$, where $|\cdot|$ is the {\em absolute value function}.  Therefore, the augmented Lagrangian given in~\eqref{eq:pen_fun} can then be expressed for Problem~(P1) as
\begin{equation}\label{eq:LagrnCarUnconstr}
L(u,c)=\frac{1}{2}\int_0^1u^2(t)\,dt +
c\left(\, \left| r_1+\ds\int_0^1\left[ \int_0^\tau u(s)\,ds \right]d\tau \right|
+\left|r_2+ \ds\int_0^1 u(\tau)\,d\tau\right|\, \right).
\end{equation}

\subsection{The PDP algorithm}
\label{sec:algPDP}

In what follows, we provide a description of the PDP algorithm from~\cite{BurKayLiu2023}. 

\begin{algorithm} \label{alg:PDP}
{\bf (Primal--Dual Penalty (PDP) Algorithm)}
\begin{quote}\rm
Let $\alpha,\varepsilon>0$.  Choose a sequence $\{\alpha_k\}\subset (0,\alpha)$.\\[1mm]
{\bf Step \boldmath{$0$}}. (Initialization) Choose $c_0>0$ and let $k:=0$.

\noindent {\bf Step \boldmath{$1$}}. (Solution of Subproblem and Stopping Criterion)
\begin{itemize}
    \item[(a)] Find $\ds u_k\in \argmin_{u\in H}{L(u,c_k)}$.
            \item[(b)] If $\| h(u_k)\|_\infty < \varepsilon$, stop.
\end{itemize}

\noindent {\bf Step \boldmath{$2$}}. (Selection of step-size and Update of Dual Variables)\\
Choose $s_k>0$ and set $\tilde{s}_k = (\alpha_k+1)s_k$. Update the penalty parameter by
\[
c_{k+1}:= c_k + \tilde{s}_k\,\|h(u_k)\|_1\,.
\]
Set $k:=k+1$, go to Step $1$.
\end{quote}
\end{algorithm}
The step-size $s_k$ to be used in the PDP algorithm is chosen as in~\cite[Algorithm 2]{BIMinexact2013}, which gives rise to the versions PDP-1 and PDP-2 of the PDP algorithm.
\begin{itemize}
    \item Algorithm PDP-1: Take two parameters $\beta>\eta>0$. Let $u_k$ be as in Step 1(a). Consider the step-size
\begin{equation}\label{sk_DSG1}
s_k\in [\eta_k,\beta_k]\,,
\end{equation}
where $\eta_k:=\min\{\eta,\|h(u_k)\|_2\}$ and $\beta_k:=\max\{\beta,\|h(u_k)\|_1+\|h(u_k)\|_2\}$, where $\|\cdot\|_2$ is the finite-dimensional $\ell_2$ norm. 
    \item Algorithm PDP-2: Take  $\beta>0$ and a sequence $\{\theta_k\} \subset\R_+$  such that
$\sum_j\theta_j=\infty$, and $\theta_k\leq\beta$ for all $k$.
Let $u_k$ be as in Step 1(a). Consider the step-size
\begin{equation}\label{sk_DSG2}
s_k\in [\eta_k,\beta_k]\,,
\end{equation}
where $\eta_k:= \theta_k/\|h(u_k)\|_1$ and $\beta_k:= \beta/\|h(u_k)\|_1$.  
\end{itemize}

\subsection{Analytical solution to Problem~(P)}
\label{sec:solution}

The analytical solution to Problem~(P) can simply be given as (see, for example, \cite{Kaya2020})
 \begin{equation}\label{solnCarUnconstr}
\begin{array}{rcl}
u(t)&=& c_1t+c_2\,,\\[2mm]
x_1(t) &=& \dfrac{1}{6}c_1t^3+\dfrac{1}{2}c_2t^2+v_0t+s_0\,,\\[4mm]
 x_2(t)&=& \dfrac{1}{2}c_1t^2+c_2t+v_0\,,
\end{array}
\end{equation}
for all $t\in [0,1]$, where
\begin{equation}\label{cPenaltyCarUnconstr}
\begin{array}{rcl}
c_1 &=& -12(s_f-s_0)+6(v_0+v_f)\,,\\[2mm]
c_2 &=& 6(s_f-s_0)-2(2v_0+v_f)\,.
\end{array}
\end{equation}
For the case where $s_0=0$, $s_f=0$, $v_0=1$, and $v_f=0$, the solution $u(t) = 6\,t - 4$ is shown in Figure~\ref{fig:solnUnconstr}(a) and the corresponding $x_1(t) = t^3 - 2\,t^2 + t$ and $x_2(t) = 3\,t^2 - 4\,t + 1$ in Figure~\ref{fig:solnUnconstr}(b).

 \begin{figure}
  \centering
  \begin{subfigure}{.49\textwidth}
\centering
  \includegraphics[width=\textwidth]{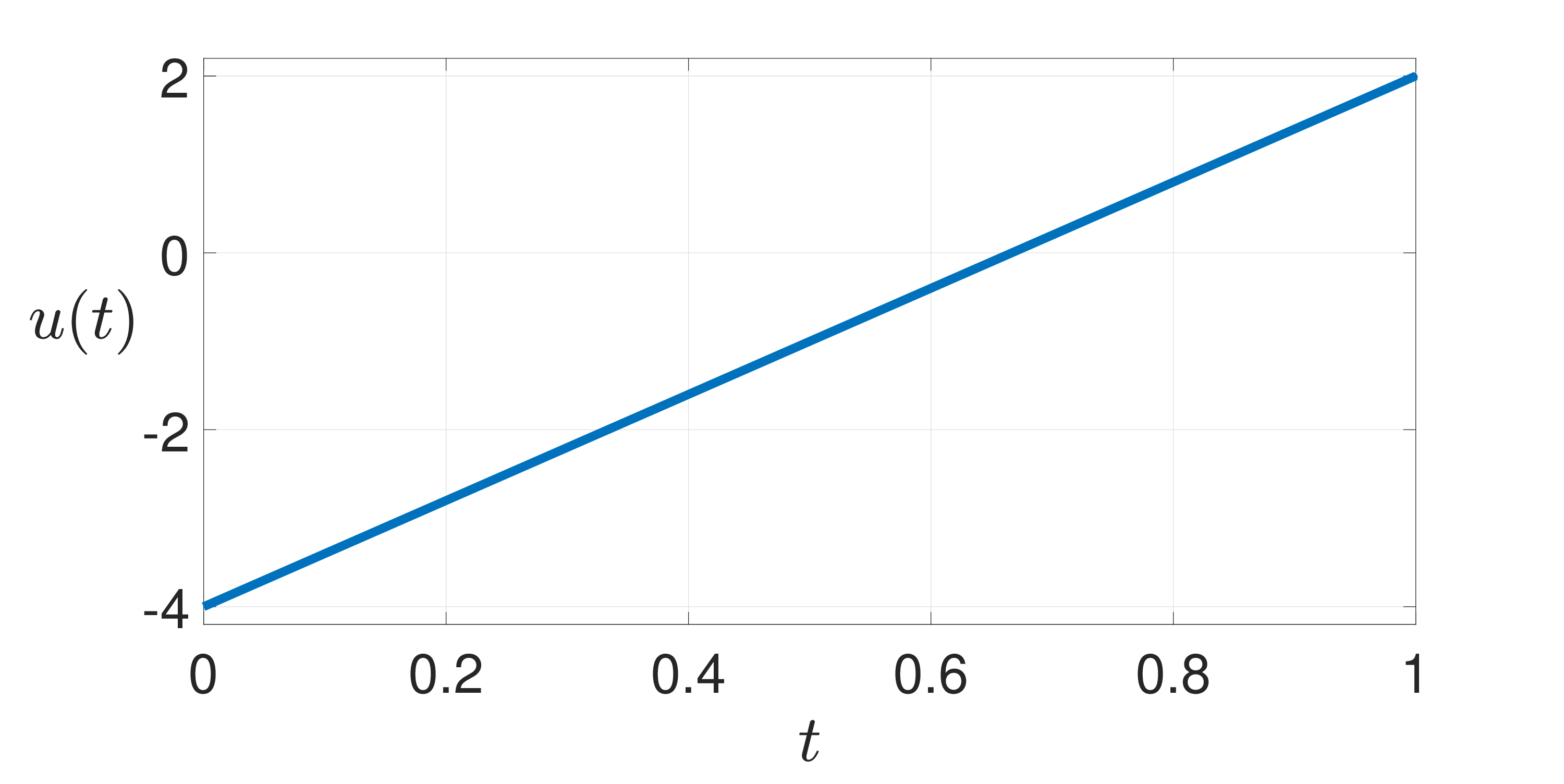}
  \caption{\small Optimal control variable.}
  \label{fig:xFigUnconstr}
\end{subfigure}
\hfill
\begin{subfigure}{.49\textwidth}
 \centering
  \includegraphics[width=\textwidth]{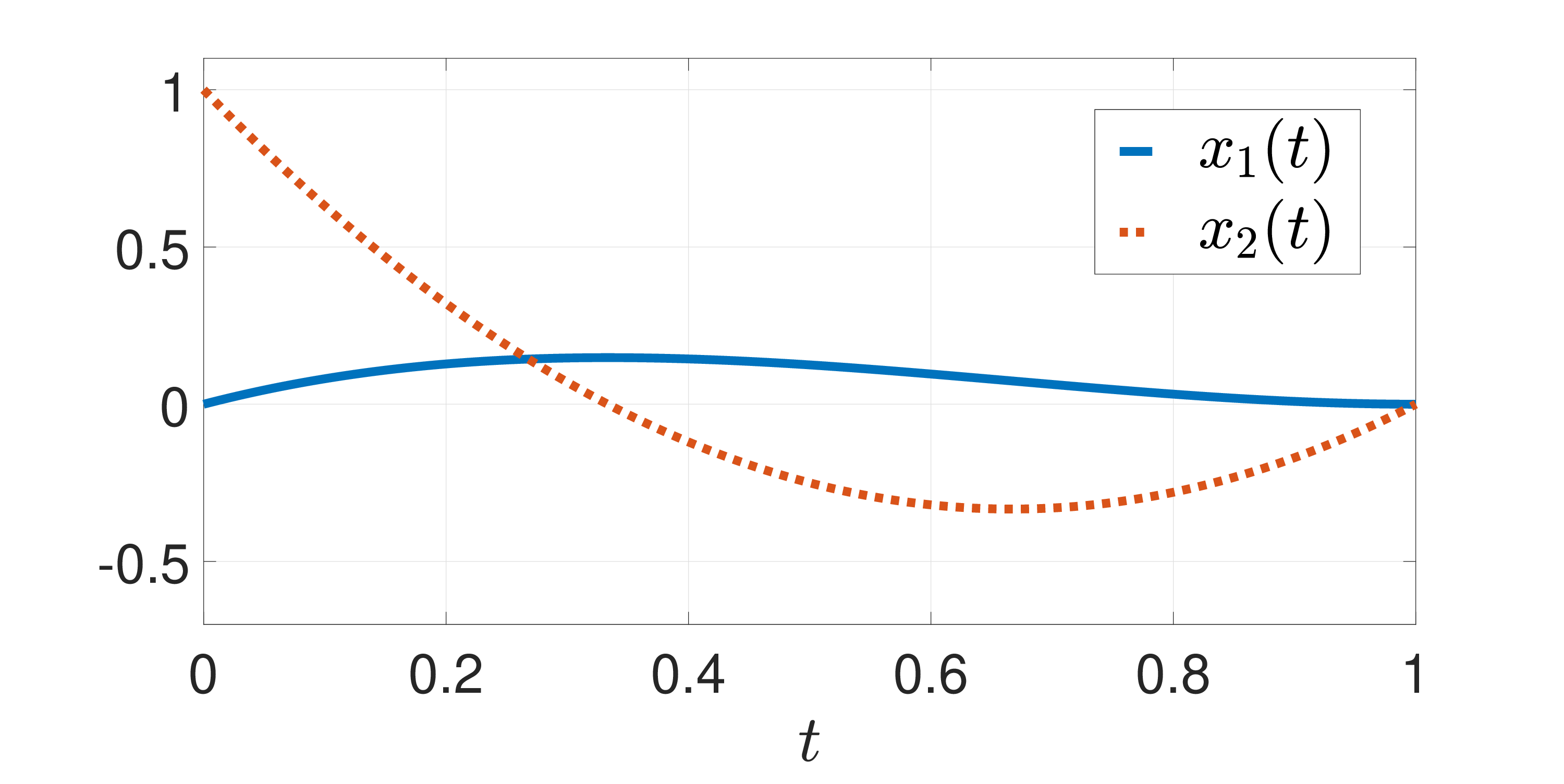}
  \caption{\small Optimal state variables.}
  \label{fig:uFigUnconstr}
  \end{subfigure}
  \caption{\sf Solution to Problem~(P) for $s_0$ = 0, $s_f$ = 0, $v_0$ = 1, and $v_f$ = 0.}
  \label{fig:solnUnconstr}
\end{figure}

\subsection{Generalized derivative}

For $a\in \R$, we define the {\em sign function} as follows.
\[
{\sign\, }(a):=\left\{
\begin{array}{rl}
1\,, & \hbox{ if }\ a>0\,,\\
-1\,, & \hbox{ if }\ a<0\,,\\
0\,, & \hbox{ if }\ a=0\,.
\end{array}
\right.
\]
Denote by $\beta:\R\to \R$ the {\em absolute value function}, i.e., $\beta(t)=|t|$ for every $t\in \R$. We now recall the {\em generalized derivative} of a function of the form $\theta(u):=|\eta(u)|$, where the function $\eta:{\cal L}^2([0,1];\R)\to \R$ is continuously differentiable. 
In Example 2.1.3 in \cite{Clarke}, the generalized derivative of $\beta(t)=|t|$ is given by
\begin{equation}\label{eq:abs}
    \partial \beta(t):=\left\{
    \begin{array}{cl}
        1\,, & \hbox{ if }\ t>0\,,  \\
        -1\,, & \hbox{ if }\ t<0\,,  \\
 \left[-1,1\right], & \hbox{ if }\ t=0\,.
    \end{array}
    \right.
\end{equation}
Since the function $\eta$ is continuously differentiable, then by \cite[Corollary in p.32]{Clarke}, the composition function $\theta$ satisfies the hypotheses of \cite[Theorem 2.3.10]{Clarke} which yields the following.
\[
\partial \theta(u) =\partial\beta(\eta(u))\,\dfrac{d \eta}{d u}(u)\,.
\]
Then using \eqref{eq:abs} we obtain the generalized derivative of $\theta$ as
\begin{equation}\label{eq:comp}
      \partial \theta(u):=\left\{
    \begin{array}{ll}
        \dfrac{d \eta}{d u}(u) & \hbox{ if }\ \eta(u)>0\,,  \\[4mm]
        -\dfrac{d \eta}{d u}(u) & \hbox{ if }\ \eta(u)<0\,,  \\[4mm]
 \left[-1,1\right]\,\dfrac{d \eta}{d u}(u) & \hbox{ if }\ \eta(u)=0\,.
    \end{array}
    \right. 
\end{equation}

\section{Main Results}
\label{sec:main}
We start this section by obtaining a closed expression for the control variables obtained by the PDP in Step 1(a). To preserve the flow of the paper, we have placed its proof in the Appendix. 

\begin{lemma}[Unique minimizer of the Lagrangian]
\label{lem:subgrad_p1}
Given $c>0$, re-write the Lagrangian in~\eqref{eq:LagrnCarUnconstr} in a concise form as
$L(\cdot,c):=\varphi(\cdot)+c\,|h_1(\cdot)|+c\,|h_2(\cdot)|$,
where  $h_1$ and $h_2$ are as in \eqref{eq:h_p1}, and 
$\varphi(u):=\ds\frac{1}{2}\int_0^1u^2(t)dt$.
Then the following holds.
\begin{itemize}
    \item[(i)] Define  the function $\gamma\in{\cal L}^2([0,1];\R)$ as $\gamma(t):=1-t$ for every $t\in [0,1]$. Given a fixed $u\in {\cal L}^2([0,1];\R)$, we have that
\begin{equation}\label{minimizerLagrnCarUnconstr}
u+c\,\alpha_1(u)\,\gamma+c\,\alpha_2(u) \in \partial_u L(u,c),
\end{equation}
where $\alpha_i(u)\in [-1,1]$, for $i=1,2$, are given by the following rule:
\begin{equation}\label{eq:alfa}
\left\{\begin{array}{lc}
  \alpha_i(u)={\rm sign\, }(h_i(u))   &  \hbox{ if }h_i(u)\neq 0, \\
 \alpha_i(u)\in [-1,1]    &  \hbox{ if }h_i(u)=0,
\end{array}\right.
\end{equation}
for $i=1,2$.
\item[(ii)]
Given $c>0$ there exists a unique minimizer $u_c$ of  $L(\cdot,c)$, and $u_c$ is given by
\begin{equation}\label{eq:minimizernCarUnconstr}
    u_c(t)=c\,\alpha_1(u_c) t-c(\alpha_1(u_c)+ \alpha_2(u_c)),
\end{equation}
where $\alpha_i$ are as in \eqref{eq:alfa}.
\end{itemize}
\end{lemma}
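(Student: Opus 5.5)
The plan is to compute the subdifferential of $L(\cdot,c)$ term by term, using convexity throughout, and then read off the minimizer from the optimality condition $0\in\partial_u L(u_c,c)$.

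For part~(i), the first step is to rewrite $h_1$ in a form whose derivative is visible. By Fubini's theorem (interchanging the order of integration over the triangle $0\le s\le\tau\le 1$),
\[
\int_0^1\left[\int_0^\tau u(s)\,ds\right]d\tau=\int_0^1 (1-s)\,u(s)\,ds=\langle \gamma,u\rangle_{{\cal L}^2}\,.
\]
Hence $h_1(u)=r_1+\langle\gamma,u\rangle$ and $h_2(u)=r_2+\langle 1,u\rangle$. Both are continuous affine functionals on ${\cal L}^2([0,1];\R)$, with constant Fr\'echet derivatives $\gamma$ and $1$ respectively. Applying \eqref{eq:comp} with $\eta=h_i$ gives $\alpha_1(u)\gamma\in\partial|h_1|(u)$ and $\alpha_2(u)\in\partial|h_2|(u)$, with $\alpha_i$ chosen as in \eqref{eq:alfa}.

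Next, $\varphi$ is continuously differentiable with gradient $u$. Each term of $L(\cdot,c)$ is convex and continuous: $\varphi$ is a squared norm, and $|h_i|$ is the absolute value composed with an affine map. The generalized derivative therefore coincides with the convex subdifferential. Since all terms are continuous and finite everywhere, the convex sum rule (Moreau--Rockafellar) holds with equality. Using that $c>0$ scales subdifferentials, we obtain
\[
\partial_u L(u,c)=u+c\,\partial|h_1|(u)+c\,\partial|h_2|(u)\ni u+c\,\alpha_1(u)\gamma+c\,\alpha_2(u)\,,
\]
which is \eqref{minimizerLagrnCarUnconstr}. If one prefers to stay in Clarke's framework, the inclusion ``$\subset$'' from \cite[Prop.~2.3.3]{Clarke} is reversed by regularity of convex functions \cite[Prop.~2.3.6]{Clarke}, so equality holds there as well.

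For part~(ii), existence and uniqueness come first. $L(\cdot,c)$ is the sum of the $1$-strongly convex continuous function $\varphi$ and convex continuous functions. It is therefore strongly convex, coercive and weakly lower semicontinuous on the Hilbert space ${\cal L}^2$, so it has exactly one minimizer $u_c$. By convexity, $u_c$ is characterized by $0\in\partial_u L(u_c,c)$. Using the exact sum rule from part~(i), there exist $\alpha_i(u_c)$ satisfying \eqref{eq:alfa} such that
\[
0=u_c+c\,\alpha_1(u_c)\gamma+c\,\alpha_2(u_c)\,.
\]
Thus $u_c(t)=-c\,\alpha_1(u_c)(1-t)-c\,\alpha_2(u_c)=c\,\alpha_1(u_c)\,t-c(\alpha_1(u_c)+\alpha_2(u_c))$, which is \eqref{eq:minimizernCarUnconstr}.

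The main obstacle is the subtlety in this last step. The formula is implicit: the $\alpha_i(u_c)$ must be consistent with the signs of $h_i(u_c)$. Moreover, when some $h_i(u_c)=0$, the value $\alpha_i(u_c)$ is not given by the sign rule but is a specific element of $[-1,1]$. The proof must therefore use the full equality in the sum rule, not merely the inclusion exhibited in part~(i), to guarantee that suitable multipliers exist. I would make this explicit by invoking the Moreau--Rockafellar theorem rather than relying only on \eqref{eq:comp}.
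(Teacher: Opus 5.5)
Your proposal is correct and follows essentially the same route as the paper: Fubini to get $h_1(u)=r_1+\langle\gamma,u\rangle$, the composition rule \eqref{eq:comp} for $|h_i|$, the convex sum rule with equality, and then solving $0\in\partial_u L(u_c,c)$ for $u_c$. Your explicit existence argument (coercivity and weak lower semicontinuity) and your remark that equality in the sum rule is what supplies the multipliers $\alpha_i(u_c)$ state precisely two points that the paper only asserts.
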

\begin{proof} See the proof of  Lemma~\ref{lem:subgrad_p1} in \hyperlink{prf:LEMsubgrad_p1}{Appendix}.
\end{proof}

\subsection{Partitioning the \boldmath{$r_1r_2$}-plane}

It will turn out that the analytical solution to Problem~(P) depends on the values of $r_1$ and $r_2$ uniquely defined in~\eqref{eq:r1_r2} in terms of the boundary conditions.  To study all possible cases based on the values of these parameters,
we consider the $r_1$- and $r_2$-axes and the lines $r_2=9\,r_1/5$, $r_2=3r_1$, which divide the $r_1r_2$--plane into 13 disjoint regions (Regions I--XIII) in the $r_1r_2$-plane. The descriptions of all 13 regions are given in Table~\ref{table:r1r2}.  Then we illustrate these regions in Figure~\ref{fig:r1r2relation}, with only eight of the (visually obvious) regions labelled. To complete the picture, Regions IX and X are the dashed lines in the first and third quadrants, respectively; Regions XI and XII are the positive and negative parts of the $r_1$-axis, respectively; Region XIII is the origin $(0,0)$.

  \begin{table}[t!]
    \centering
    {\small
    \begin{tabular}{cl|cl}
      Region & Set of $(r_1, r_2)$ & Region & Set of $(r_1, r_2)$\\[1mm]
       \hline &&& \\[-2mm]
        I & $r_1,r_2>0, \ r_2< 9r_1/5$ &
        V & $r_1,r_2<0, \ r_2>9r_1/5$ \\[2mm]
        II & $r_1,r_2>0, \ 9r_1/5<r_2\le 3r_1$ &
        VI & $r_1,r_2<0, \ 3r_1\le r_2<9r_1/5$ \\[2mm]
        III & $r_1,r_2>0, \ r_2>3r_1$ &
        VII & $r_1,r_2<0, \ r_2<3r_1$ \\[2mm]
        IV & $r_1\le 0, \ r_2>0$ &
        VIII & $r_1\ge 0,\, r_2<0$\\[2mm]
         \hline  &&& \\[-2mm]
        IX & $r_1>0,\, r_2=9r_1/5$&
        X & $r_1<0,\, r_2=9r_1/5$ \\[2mm]
        XI & $r_1> 0,\, r_2=0$&
        XII & $r_1< 0,\, r_2=0$ \\[2mm]
        XIII & $r_1= 0,\, r_2=0$\\[2mm]
        \hline
      \end{tabular}
      }
      \captionof{table}{\sf Regions I--XIII in the $r_1r_2$-plane.}
      \label{table:r1r2}
\end{table}
\begin{figure}[t!]
  \centering
    \includegraphics[width= 0.4\textwidth]{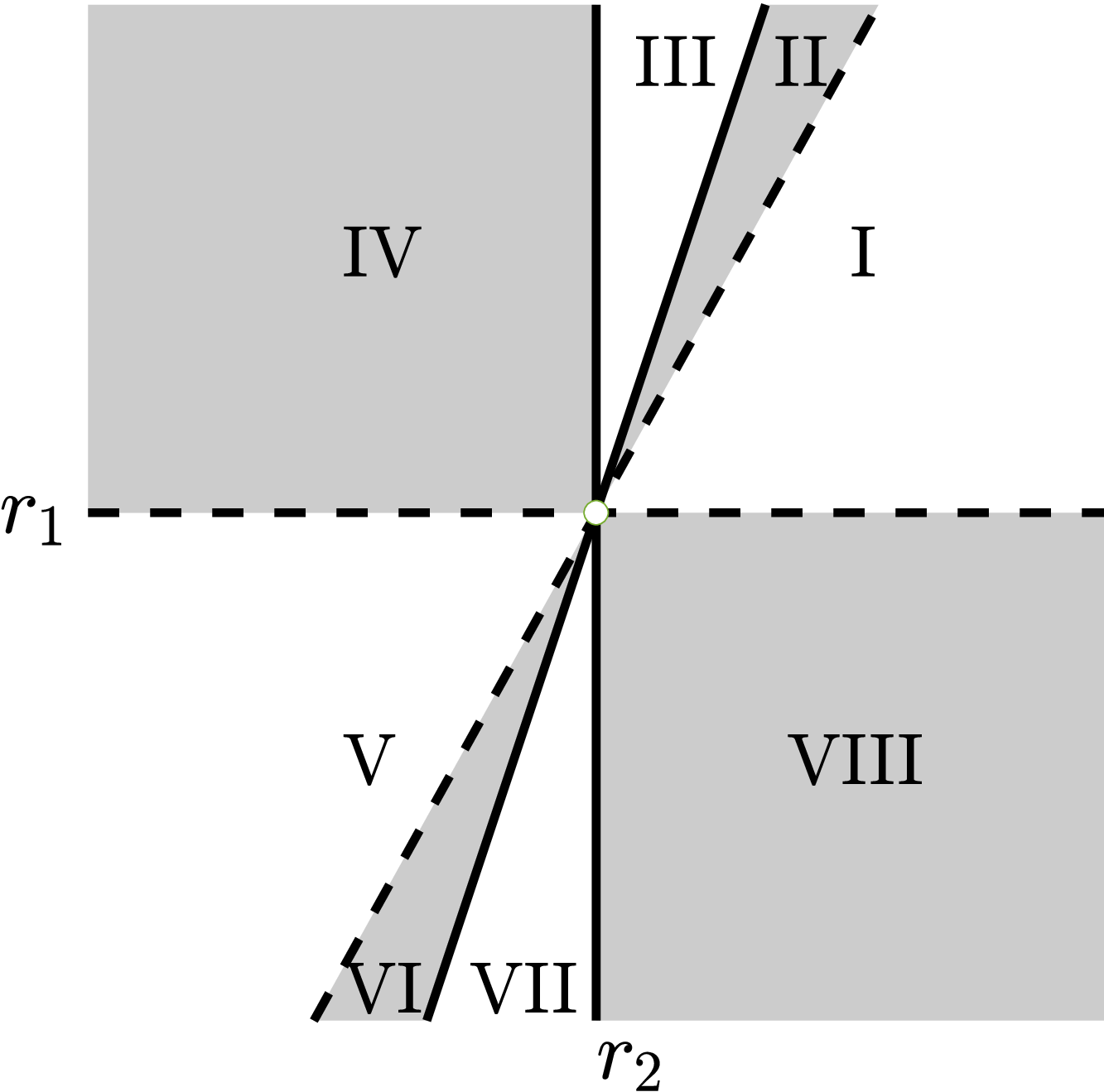}
    \captionof{figure}{\sf Regions I--VIII in the $r_1r_2$-plane. The solid boundary rays are contained by the grey regions II, IV, VI and VIII. Regions IX and X are the dashed lines in the first and third quadrants, respectively, and Regions XI and XII are the positive and negative parts of the $r_1$-axis, respectively. Region XIII is the origin $(0,0)$.}
    \label{fig:r1r2relation}
  \end{figure}
  
Define $M_1:=|12r_1-6r_2|$ and $M_2:=|-6r_1+4r_2|$. On each of the two lines $r_2=\dfrac{9}{5}r_1$ and $r_2=3r_1$, we have, respectively,
\begin{equation}\label{eq:maxIntersect}
 M_1 = M_2 =\bigg|\dfrac{6}{5}r_1\bigg| = \bigg|\dfrac{2}{3}r_2\bigg|
 \quad \hbox{and} \quad
 M_1 = M_2 =|6r_1| = |2r_2|.
\end{equation}
 We define
\[
    S_j:=\{(r_1,r_2)\in\R^2\,:\, (r_1,r_2)\ \mbox{is in  Region}\ j\}\,,\ \ \mbox{for}\,  j=I, II, III,\ldots, XIII.
\] 
It is trivial to check that the sets $S_j$ are mutually disjoint. Define $K:=\{(r_1,r_2,c)\in\R^3\,:\,c>0\}$.  Define also the following sets.
\begin{equation}
\begin{array}{l}\label{eq:r1r2cSet}
 V^1:=\left\{(r_1,r_2,c)\in K\,:\, (r_1,r_2)\in S_{I}\cup S_{II} \cup S_{III} \cup S_{IX},\,0<c<\min\left\{ \dfrac{6}{5}r_1,\dfrac{2}{3}r_2\right\}\right\}, \\[4mm]
 V^2:=\left\{(r_1,r_2,c)\in K\,:\, (r_1,r_2)\in S_{VII},\, -6r_1<c<-2r_2;\,\mbox{or}\,(r_1,r_2)\in S_{VIII},\,0<c<-2r_2\right\}, \\[3mm]
 V^3:=\left\{(r_1,r_2,c)\in K\,:\, (r_1,r_2)\in S_{III},\, 6r_1<c<2r_2;\, \mbox{or}\,(r_1,r_2)\in S_{IV},\,0<c<2r_2\right\}, \\[2mm]
 V^4:=\left\{(r_1,r_2,c)\in K\,:\, (r_1,r_2)\in S_{V}\cup S_{VI} \cup S_{VII} \cup S_{X}, \, 0<c<\min\left\{ -\dfrac{6}{5}r_1,-\dfrac{2}{3}r_2\right\}\right\}, \\[3mm]
 V^5:= \left\{(r_1,r_2,c)\in K\,:\, (r_1,r_2)\in S_{II},\, \dfrac{6}{5}r_1\le c<-6r_1+4r_2;\mbox{or}\,(r_1,r_2)\in S_{III},\,\dfrac{6}{5}r_1\le c\le6r_1\right\},\\[3mm]
V^6:=  \bigg\{(r_1,r_2,c)\in K\,:\, (r_1,r_2)\in S_{VI},\, -\dfrac{6}{5}r_1\le c<6r_1-4r_2; \mbox{or}\,(r_1,r_2)\in S_{VII},\,-\dfrac{6}{5}r_1\le c\le-6r_1\bigg\},\\[3mm]
V^7:=  \bigg\{(r_1,r_2,c)\in K\,:\, (r_1,r_2)\in S_{VII}\cup S_{VIII},\, -2r_2\le c<12r_1-6r_2; \\
\hspace{15mm} \mbox{or}\,(r_1,r_2)\in S_{I},\,\dfrac{2}{3}r_2\le c<12r_1-6r_2;\,\mbox{or}\,(r_1,r_2)\in S_{XI},\,0<c<12r_1-6r_2\bigg\},\\[3mm]
V^8:=  \bigg\{(r_1,r_2,c)\in K\,:\, (r_1,r_2)\in S_{III}\cup S_{IV},\, 2r_2\le c<-12r_1+6r_2; \\ 
\hspace{15mm} \mbox{or}\,(r_1,r_2)\in S_{V},\,-\dfrac{2}{3}r_2\le c<-12r_1+6r_2;\,\mbox{or}\,(r_1,r_2)\in S_{XII},\,0<c<-12r_1+6r_2\bigg\},\\[4mm]
V^9:=  \left\{(r_1,r_2,c)\in\R^2\times\R_{++}\,:\, c\ge\max\left\{|12r_1-6r_2|,|-6r_1+4r_2|\right\}\right\}.
\end{array}
\end{equation}

\begin{remark}[Union of \boldmath{$V^i$}]
\label{rk:cr_union}\rm
We can see from \eqref{eq:r1r2cSet} that $\bigcup\limits_{i=1}^{9} V^i\subseteq\R^2\times\R_{++}$.
\endproof
\end{remark}
In fact, the nine sets defined above are mutually disjoint. The proof of this fact is rather long, albeit elementary, so we have placed it in the Appendix. 
\begin{lemma}[Disjointness of \boldmath{$V^i$}]
\label{lm:crSet_disjoint}
The sets $V^i$s for $j=1,2,\ldots,9$ defined in Equation~\eqref{eq:r1r2cSet} are mutually disjoint, i.e., $V^i\cap V^j=\emptyset$ for all $i\not=j$, $i,j=1,\ldots,9$.
\end{lemma}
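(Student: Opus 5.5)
The plan is to exploit the fact that each $V^i$ is a union of ``slabs'' of the form $\{(r_1,r_2,c): (r_1,r_2)\in S_j,\ c\in I_j^i\}$, where $I_j^i$ is an interval in $c$ that depends on $(r_1,r_2)$. Since the regions $S_j$ are mutually disjoint, two sets $V^i$ and $V^k$ can meet only over a region $S_j$ that appears in the definitions of both. So for each $j\in\{I,\dots,XIII\}$ I will list the sets $V^i$ that involve $S_j$, together with their $c$-intervals. I then check that, for every fixed $(r_1,r_2)\in S_j$, these intervals are pairwise disjoint. Doing this fibrewise over each region proves $V^i\cap V^k=\emptyset$ for all $i\neq k$.

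The key steps, in order, are as follows.

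First I tabulate the fibres. Over $S_I$: $V^1$ gives $(0,\min\{\tfrac65r_1,\tfrac23r_2\})$, $V^7$ gives $[\tfrac23r_2,12r_1-6r_2)$, and $V^9$ gives $[\max\{M_1,M_2\},\infty)$. Over $S_{II}$: $V^1$, $V^5$ with $[\tfrac65r_1,-6r_1+4r_2)$, and $V^9$. Over $S_{III}$: $V^1,V^3,V^5,V^8,V^9$. Over $S_{IV}$: $V^3,V^8,V^9$. The third-quadrant regions $S_V$--$S_{VII}$ are the mirror images, and $S_{VIII}$ carries $V^2,V^7,V^9$. Over $S_{IX}$ and $S_{X}$ only $V^1$ or $V^4$ appears together with $V^9$; over $S_{XI}$ and $S_{XII}$ only $V^7$ or $V^8$ appears with $V^9$; over $S_{XIII}$ only $V^9$ appears.

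Second, I exploit the symmetry $(r_1,r_2)\mapsto(-r_1,-r_2)$. It maps $S_I,S_{II},S_{III},S_{IV}$ onto $S_V,S_{VI},S_{VII},S_{VIII}$, and it maps $V^1,V^3,V^5,V^8$ onto $V^4,V^2,V^6,V^7$ respectively, up to boundary rays of $S_{IV}$ and $S_{VIII}$. This halves the work, but each claimed correspondence must be checked rather than assumed.

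Third, within each region I resolve $\min$ and $\max$ explicitly from the region's inequalities.
\begin{itemize}
\item In $S_I$ ($r_2<\tfrac95r_1$) the minimum is $\tfrac23r_2$. Hence the $V^1$ and $V^7$ intervals abut at $\tfrac23r_2$ without overlapping.
\item Also in $S_I$, $M_1=12r_1-6r_2>0$ and $M_1\ge M_2$. So $V^7$ ends exactly where $V^9$ begins.
\item In $S_{II}$ the minimum is $\tfrac65r_1$ and the maximum is $M_2=-6r_1+4r_2$. The same abutting pattern follows.
\item In $S_{III}$ ($r_2>3r_1$) the chain is
\[
\tfrac65r_1\le c\le 6r_1,\quad 6r_1<c<2r_2,\quad 2r_2\le c<-12r_1+6r_2,\quad c\ge\max\{M_1,M_2\}.
\]
Here $M_1=6r_2-12r_1\ge M_2$ when $r_2\ge 3r_1$.
\end{itemize}
Each step uses only one or two linear inequalities, e.g.\ $6r_1<2r_2\iff r_2>3r_1$ and $M_1\ge M_2\iff r_2\ge3r_1$ in the first quadrant. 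For the regions with fewer sets, such as $S_{IV}$, $S_{VIII}$, $S_{XI}$ and $S_{XII}$, the sign information $r_1\le0<r_2$ and its analogues identifies which of $M_1,M_2$ is the maximum. That in turn shows the finite interval lies entirely below $\max\{M_1,M_2\}$.

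The main obstacle is bookkeeping at the boundary lines $r_2=\tfrac95r_1$ and $r_2=3r_1$, where closed and open endpoints (e.g.\ $\le 6r_1$ versus $6r_1<$) must interlock correctly, together with the comparison of $M_1$ and $M_2$ in each region. For this I will use \eqref{eq:maxIntersect} and the sign of $M_1^2-M_2^2$, which factors as $(18r_1-10r_2)(6r_1-2r_2)$ up to a positive constant. This factorization directly yields which of $M_1,M_2$ is larger in each sector bounded by the two lines.
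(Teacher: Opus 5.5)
Your argument is correct, but it is organized differently from the paper's.

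\textbf{The paper's route.} It works pair by pair. For each $i<j$ it dismisses $V^i\cap V^j$ either because the underlying regions are disjoint or because the $c$-ranges are visibly incompatible. The only real work is in the pairs involving $V^9$ and in $V^5\cap V^8$, $V^6\cap V^7$. There it extracts a one-sided bound such as $c\ge 12r_1-6r_2\ge \frac65 r_1$ from $c\ge\max\{M_1,M_2\}$ and the region's inequalities. It treats the first- and third-quadrant cases separately.

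\textbf{Your route.} You work fibrewise over each $S_j$. You resolve the $\min$ and $\max$ exactly; your identity $M_1^2-M_2^2=(18r_1-10r_2)(6r_1-2r_2)$ in fact holds with constant $1$. You then check that the fibre intervals interlock as an abutting chain, and use the central symmetry to halve the work. Your tabulation of which $V^i$ lie over each $S_j$ is right, and so are the open/closed endpoints. The symmetry is exact: $S_{IV}=\{r_1\le 0<r_2\}$ maps onto $S_{VIII}=\{r_1\ge 0>r_2\}$, and $V^1,V^3,V^5,V^8,V^9$ map exactly onto $V^4,V^2,V^6,V^7,V^9$. So your boundary-ray caveat is unnecessary.

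\textbf{What each buys.} For disjointness alone you only need the trivial bound that $\max\{M_1,M_2\}$ is at least the relevant endpoint. Pinning down which of $M_1,M_2$ is larger is therefore more than the lemma requires, and this is what the paper's route saves. Your extra work does pay off, though. Every chain starts at $0$, ends at $+\infty$, and has matching endpoints, so your argument also proves $\bigcup_{i}V^i=\R^2\times\R_{++}$: the $V^i$ partition the parameter space. The remark following the lemma actually uses this, since it needs existence as well as uniqueness of $i$. By contrast, Remark~\ref{rk:cr_union} states only an inclusion, and in the paper the covering otherwise has to be recovered from Step~1 of the proof of Theorem~\ref{th:dualCarUnconstrIFF}.
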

\begin{proof}
See the proof of  Lemma~\ref{lm:crSet_disjoint} in \hyperlink{prf:LEMcrSet_disjoint}{Appendix}.
\end{proof}

\begin{remark}[Nine possible cases for (P)] \rm 
Remark \ref{rk:cr_union} and Lemma \ref{lm:crSet_disjoint} imply that, for every instance $(r_1,r_2)$ of Problem (P), and every $c>0$, there exists a unique $i\in \{1,\ldots,9\}$ such that $(r_1,r_2,c)\in V^i$. Note also that since we have two constraints and three possible signs for each (counting sign zero when the constraint value is zero), there are nine possible sign cases for the constraints (see the first two columns of Table~\ref{table:thm_9cases}).  
It turns out that each of the nine inclusions $\{(r_1,r_2,c)\in V^i\}_{i=1,\ldots,9}$ corresponds to one and only one of the possible signs of the constraints.  Each of these nine cases, in turn, fully defines the formulas of the minimizer $u_c$ and the dual function (see the third and fourth columns of Table~\ref{table:thm_9cases}). We show these equivalences in Theorem~\ref{th:dualCarUnconstrIFF}. 
\proofbox
\end{remark}

The proof of the following theorem consists of the analysis of nine possible cases. Since each case follows a similar logical argument, we have placed the proof in the Appendix. 

\begin{theorem}[Dual function vs signs of constraint functions]
\label{th:dualCarUnconstrIFF}
Let $r_1$ and $r_2$ be the parameters associated with the boundary conditions in Problem (P), and let $V^i$s, $i = 1,2,\ldots,9$, be defined as in~\eqref{eq:r1r2cSet}. Given $c>0$, and $i = 1,2,\ldots,9$, call $u_c^{(i)}$ the (unique) minimizer of $L(\cdot,c)$ and call $q^{(i)}(c):=L(u_c,c)$ the dual function. Depending on the signs of $h_1(u_c)$ and $h_2(u_c)$, one and only one of the nine cases listed in Table~\ref{table:thm_9cases} holds.  For each case $i$, $i = 1,\ldots,9$, the statements in row $i$ of columns $(A)$, $(B)$ and $(C)$, referred to as $(iA)$, $(iB)$ and $(iC)$, are equivalent. 
\end{theorem}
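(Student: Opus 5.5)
The plan rests on Lemma~\ref{lem:subgrad_p1}(ii): the minimizer has the form $u_c(t)=c\,\alpha_1 t-c(\alpha_1+\alpha_2)$ with $\alpha_i=\alpha_i(u_c)$ as in \eqref{eq:alfa}. Substituting this into \eqref{eq:h_p1} gives the constraint values as affine functions of $(\alpha_1,\alpha_2)$. Since $\int_0^1u=-c(\alpha_1/2+\alpha_2)$ and $\int_0^1\!\int_0^\tau u\,ds\,d\tau=\int_0^1(1-t)u(t)\,dt=-c(\alpha_1/3+\alpha_2/2)$, we get
\[
h_1(u_c)=r_1-c\Big(\tfrac{\alpha_1}{3}+\tfrac{\alpha_2}{2}\Big),\qquad h_2(u_c)=r_2-c\Big(\tfrac{\alpha_1}{2}+\alpha_2\Big).
\]
Everything else is read off from these two identities, combined with the self-consistency requirement \eqref{eq:alfa}: $\alpha_i=\sign(h_i(u_c))$ when $h_i(u_c)\neq0$, and $\alpha_i\in[-1,1]$ when $h_i(u_c)=0$.

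\textbf{Step 1: $(iA)\Rightarrow(iB)$ and $(iA)\Rightarrow(iC)$.} For each of the nine sign patterns $(iA)$, the $\alpha_i$ are fixed or determined as follows.
\begin{itemize}
\item If both constraints are nonzero, then $\alpha_1,\alpha_2\in\{\pm1\}$. For example, the pattern $(+,+)$ gives $h_1=r_1-5c/6>0$ and $h_2=r_2-3c/2>0$, i.e.\ $c<\min\{6r_1/5,\,2r_2/3\}$. This forces $(r_1,r_2)\in S_I\cup S_{II}\cup S_{III}\cup S_{IX}$ and yields $V^1$.
\item If exactly one constraint vanishes, the corresponding $\alpha_i$ is obtained by solving a linear equation. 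The requirement $\alpha_i\in[-1,1]$, together with the sign condition on the other constraint, then becomes a pair of linear inequalities in $c$.
\item If both constraints vanish, solving the $2\times2$ system gives $\alpha_1=(12r_1-6r_2)/c$ and $\alpha_2=(-6r_1+4r_2)/c$. The constraint $|\alpha_i|\le1$ is then exactly $c\ge\max\{M_1,M_2\}$, i.e.\ $V^9$.
\end{itemize}
In every case I will rewrite the resulting inequalities as a condition on $(r_1,r_2)$, namely which of $S_I,\dots,S_{XIII}$ admit a nonempty $c$-interval, together with the interval of $c$. I will check that this coincides with the corresponding $V^i$ in \eqref{eq:r1r2cSet}; this requires the slopes $9/5$ and $3$ and the identities \eqref{eq:maxIntersect}. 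Once the $\alpha_i$ are known, the explicit formula for $u_c^{(i)}$ follows. The formula for $q^{(i)}(c)=\frac12\|u_c\|^2_{{\cal L}^2}+c|h_1(u_c)|+c|h_2(u_c)|$ follows by direct integration.

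\textbf{Step 2: the reverse implications.} For $(iB)\Rightarrow(iA)$ I will use uniqueness rather than redo the inequalities. For fixed $c>0$ the minimizer $u_c$ exists and is unique, so exactly one sign pattern $(jA)$ holds. By Step 1 this gives $(r_1,r_2,c)\in V^j$. If $(iB)$ holds as well, Lemma~\ref{lm:crSet_disjoint} forces $j=i$, so $(iA)$ holds.

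For $(iC)\Rightarrow(iA)$, the same argument applies once the nine formulas in column $(C)$ are known to be pairwise incompatible as functions of $(r_1,r_2,c)$. Concretely, the coefficients of $u_c^{(i)}$ (slope $c\alpha_1$, intercept $-c(\alpha_1+\alpha_2)$) identify $(\alpha_1,\alpha_2)$. Whenever some $\alpha_i\in\{\pm1\}$ forces $h_i\neq0$, or a fractional value forces $h_i=0$, this pins down the pattern; one then checks that $u_c^{(i)}$ coinciding with $u_c^{(j)}$ is impossible on $V^i$ for $j\neq i$.

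\textbf{Main obstacle.} I expect the main obstacle to be the bookkeeping in Step 1 for the mixed cases, where one constraint vanishes and the other has a sign. There the inequalities couple $r_1$, $r_2$ and $c$. One must verify, for instance, that the conditions for $h_1=0,\ h_2>0$ reduce exactly to $V^5$ or $V^8$, with the correct strict versus non-strict endpoints $6r_1/5\le c<-6r_1+4r_2$, and so on. Boundary rays such as $r_2=3r_1$ also need care, which is why they are assigned to the closed grey regions. I would handle this by first writing each case as two inequalities of the form $c\lessgtr \ell(r_1,r_2)$. I would then determine, quadrant by quadrant, the sign of each $\ell$ and the order of the two endpoints, using the lines $r_2=9r_1/5$ and $r_2=3r_1$ where the endpoints cross.
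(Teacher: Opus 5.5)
Your Step~1 is the same as the paper's Step~1: the same affine expressions for $h_1(u_c),h_2(u_c)$ in $\alpha_1,\alpha_2$, and the same case-by-case reduction to inequalities in $c$. The trouble is confined to Step~2, where the two reverse arguments are attached to the wrong columns.

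Lemma~\ref{lm:crSet_disjoint} can only be invoked once you know $(r_1,r_2,c)\in V^i$. That membership is the second half of $(iC)$; statement $(iB)$ asserts only the formula for $u_c$. So your paragraph labelled $(iB)\Rightarrow(iA)$ is a non sequitur as written. What it actually proves is $(iC)\Rightarrow(iA)$, and it is exactly the paper's argument for that implication. For $(iC)$ you need no ``pairwise incompatibility'' of the column-(C) formulas, and none holds: $q^{(1)}(c)-q^{(7)}(c)=-\tfrac{1}{8}(3c-2r_2)^2$ vanishes at $c=2r_2/3$. This is precisely why $V^i$ is bundled into (C).

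For $(iB)\Rightarrow(iA)$ you are then left with the coefficient-matching route of your last paragraph. It breaks at the step ``$\alpha_i\in\{\pm1\}$ forces $h_i\neq0$'': by \eqref{eq:alfa}, $\alpha_i=\pm1$ is admissible when $h_i(u_c)=0$, and this is exactly what happens at the switching values of $c$. Take $(r_1,r_2)=(1,1)$ and $c=2/3$, so that $(r_1,r_2,c)\in V^7$. The minimizer is $u_c(t)=\tfrac{2}{3}t-\tfrac{4}{3}$, which equals both $ct-(c+2r_2)/2$ and $ct-2c$. Thus $(1B)$ holds, but $h_2(u_c)=0$, so $(1A)$ fails. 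The same thing happens with $u_c^{(1)}=u_c^{(5)}$ at $c=6r_1/5$ and with $u_c^{(7)}=u_c^{(9)}$ at $c=12r_1-6r_2$.

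The paper's own proof of $(iB)\Rightarrow(iA)$ rests on the claim that the coefficient pairs $(a^{(j)},b^{(j)})$ are always distinct, so it fails at the same points. Hence $(iB)\Rightarrow(iA)$ is true only in two situations: if column (B) carries the same qualifier $(r_1,r_2,c)\in V^i$ as column (C), in which case your disjointness argument proves it; or if these switching values of $c$ are excluded. You should make that restriction explicit rather than claim the implication for the bare formula.
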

\begin{proof}
See the proof of  Theorem~\ref{th:dualCarUnconstrIFF} in \hyperlink{prf:th:dualCarUnconstrIFF}{Appendix}.
\end{proof}

\begin{table}[t!]
\centering
{\small
\begin{tabular}{c| c| c| c}
Case & $(A)$ & $(B)$ & $(C)$ \\[1mm]
     & $(\sign(h_1(u_c)),\sign(h_2(u_c)))$ & Minimizer $u_c^{(i)}(t)$ & Dual function $q^{(i)}(c)$ \\[1mm]
     &  &  & with $(r_1,r_2,c)\in V^i$ \\[1mm] \hline \\[-3mm]
1 & $(+, +)$ & $ct-2c$ & $-\dfrac{7}{6}c^2+(r_1+r_2)c$ \\[3mm]
2 & $(+, -)$ & $ct$ & $-\dfrac{1}{6}c^2+(r_1-r_2)c$ \\[3mm]
3 & $(-, +)$ & $-ct$ & $-\dfrac{1}{6} c^2+(-r_1+r_2)c$ \\[3mm]
4 & $(-, -)$ & $ct+2c$ & $-\dfrac{7}{6}c^2-(r_1+r_2)c$ \\[3mm]
5 & $(0, +)$ & $\dfrac{-3c+6r_1}{2}t+\dfrac{c-6r_1}{2}$ & $-\dfrac{1}{8}c^2+(-\dfrac{3}{2}r_1+r_2)c+\dfrac{3}{2}r_1^2$ \\[3mm]
6 & $(0, -)$ & $\dfrac{3c+6r_1}{2}t-\dfrac{c+6r_1}{2}$ & $-\dfrac{1}{8}c^2+(\dfrac{3}{2}r_1-r_2)c+\dfrac{3}{2}r_1^2$ \\[3mm]
7 & $(+, 0)$ & $ct-\dfrac{c+2r_2}{2}$ & $-\dfrac{1}{24}c^2+(r_1-\dfrac{1}{2}r_2)c+\dfrac{1}{2}r_2^2$ \\[3mm]
8 & $(-, 0)$ & $-ct+\dfrac{c-2r_2}{2}$ & $-\dfrac{1}{24}c^2+(\dfrac{1}{2}r_2-r_1)c+\dfrac{1}{2}r_2^2$ \\[3mm]
9 & $(0, 0)$ & $(12r_1-6r_2)t-6r_1+2r_2$ & $6r_1^2-6r_1r_2+2r_2^2$ \\[1mm] \hline
\end{tabular}}
\caption{\sf Signs of $h_1(u_c)$ and $h_2(u_c)$, and the corresponding minimizer $u_c^{(i)}(t)$ of $L(\cdot,c)$ and dual function $q^{(i)}(c)$ with $(r_1,r_2,c)\in V^i$, $i = 1,\ldots,9$.}
\label{table:thm_9cases}
\end{table}

\begin{remark}[Regions I--XIII and associated dual functions] \rm
In Table~\ref{table:r1r2Dual}, we list the dual functions associated with each of the 13 regions defined in Table~\ref{table:r1r2}. The first column of Table~\ref{table:r1r2Dual} lists the regions to which $(r_1,r_2)$ belongs, the second column describes the corresponding dual function, and the last column provides examples with certain boundary conditions. For example, in the first row of Table~\ref{table:r1r2Dual}, we have the boundary condition vector $(s_0,s_f,v_0,v_f)=(0,0,1,0)$. In this case, $(r_1,r_2) = (1,1)$ belongs to Region I, where the dual function $q(\cdot)$ for Problem~(P) is a concatenation of the three quadratic functions $q^{(1)}(\cdot)$, $q^{(7)}(\cdot)$ and $q^{(9)}(\cdot)$:  $q(c) = q^{(1)}(c)$ if $c\in(0,2r_2/3)$; $q(c) = q^{(7)}(c)$ if $c\in[2r_2/3,12r_1-6r_2)$  and $q(c) = q^{(9)}(c)$ if $c\in[12r_1-6r_2,+\infty)$. The following corollary illustrates how Theorem \ref{th:dualCarUnconstrIFF} is applied to determine the structure of the dual function for these initial conditions. All other rows in Table~\ref{table:r1r2Dual} are obtained and interpreted similarly. 
\proofbox
\end{remark}

\begin{corollary}[Solution for (P) with example boundary values]
\label{cor:boundary01_p1}
When Problem~(P) has boundary values $v_0=1$, $v_f=0$, $s_0=0$, and $s_f=0$, the solution for the control variable and the dual function for Problem (P) are given by
\begin{equation*}
\begin{split}
u_c(t) = \left\{
\begin{array}{ll}
ct-2c\,, \quad &  c\in\left(0,\dfrac{2}{3}\right), \\[4mm]
ct-\dfrac{c+2}{2}\,,  &  c\in\left[\dfrac{2}{3},6\right),\\[4mm]
6t-4\,,  &  c\in [6,+\infty),
\end{array}
\right.
\end{split}\quad
\begin{split}
q(c) = \left\{
\begin{array}{ll}
-\dfrac{7}{6}c^2+2c\,, \quad &  c\in\left(0,\dfrac{2}{3}\right), \\[4mm]
-\dfrac{c^2}{24}+\dfrac{c}{2}+\dfrac{1}{2}\,,  &  c\in\left[\dfrac{2}{3},6\right),\\[4mm]
2,  &  c\in [6,+\infty)\,.
\end{array}
\right.
\end{split}
\end{equation*}
Note that in this situation, $u_c(t) = 6\,t - 4$ solves (P) for $c\ge 6$.
\end{corollary}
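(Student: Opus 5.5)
The corollary follows by applying Theorem~\ref{th:dualCarUnconstrIFF} to the data $(s_0,s_f,v_0,v_f)=(0,0,1,0)$.

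\emph{Step 1: compute $r$ and locate the region.} From \eqref{eq:r1_r2} we get $r_1=v_0+s_0-s_f=1$ and $r_2=v_0-v_f=1$. Hence $r_1,r_2>0$ and $r_2=1<9/5=9r_1/5$, so $(r_1,r_2)\in S_{I}$. For later use we record the thresholds
\[
\tfrac{2}{3}r_2=\tfrac23,\qquad \tfrac65 r_1=\tfrac65,\qquad 12r_1-6r_2=6,\qquad |{-6r_1+4r_2}|=2 .
\]

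\emph{Step 2: find which $V^i$ contain $(1,1,c)$.} We scan the definitions in \eqref{eq:r1r2cSet} and keep only the clauses that mention $S_I$, or that, like $V^9$, mention no region.
\begin{itemize}
\item $V^1$ applies when $0<c<\min\{6/5,2/3\}=2/3$.
\item $V^7$, through its $S_I$ clause, applies when $2/3\le c<6$.
\item $V^9$ applies when $c\ge\max\{6,2\}=6$.
\end{itemize}
These three intervals cover $(0,\infty)$. Lemma~\ref{lm:crSet_disjoint} then guarantees that no other $V^i$ contains $(1,1,c)$. A direct glance at the other sets confirms this, since none of them lists $S_I$. So case $1$, $7$ or $9$ of Table~\ref{table:thm_9cases} holds on the three intervals respectively.

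\emph{Step 3: substitute into Table~\ref{table:thm_9cases}.} Using the equivalence $(iA)\Leftrightarrow(iB)\Leftrightarrow(iC)$ of Theorem~\ref{th:dualCarUnconstrIFF}, we read off $u_c^{(i)}$ and $q^{(i)}$ with $r_1=r_2=1$.
\begin{itemize}
\item Case $1$: $u_c=ct-2c$ and $q=-\tfrac76c^2+2c$.
\item Case $7$: $u_c=ct-\tfrac{c+2}{2}$ and $q=-\tfrac{c^2}{24}+\tfrac{c}{2}+\tfrac12$.
\item Case $9$: $u_c=6t-4$ and $q=6-6+2=2$.
\end{itemize}

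\emph{Step 4: the final claim.} For $c\ge6$ the minimizer $u_c=6t-4$ satisfies $h_1(u_c)=h_2(u_c)=0$ by $(9A)$, so it is feasible for (P1). It also coincides with the analytical solution \eqref{solnCarUnconstr}--\eqref{cPenaltyCarUnconstr}, which gives $c_1=6$ and $c_2=-4$. Alternatively, optimality follows because $u_c$ is a feasible minimizer of the penalty function: for any feasible $u$,
\[
\varphi(u_c)=L(u_c,c)\le L(u,c)=\varphi(u).
\]

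The main obstacle is only bookkeeping: making sure the boundary points $c=2/3$ and $c=6$ fall in the correct half-open intervals. This is settled by the $\le$/$<$ conventions in $V^7$ and $V^9$.
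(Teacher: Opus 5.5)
Your proposal is correct and follows the paper's proof. Like the paper, you compute $(r_1,r_2)=(1,1)\in S_I$, place $(1,1,c)$ in $V^1$, $V^7$ and $V^9$ on $(0,2/3)$, $[2/3,6)$ and $[6,+\infty)$ respectively, and read off $u_c^{(i)}$ and $q^{(i)}$ from Theorem~\ref{th:dualCarUnconstrIFF}. The only difference is the final optimality claim: there, your direct inequality $\varphi(u_c)=L(u_c,c)\le L(u,c)=\varphi(u)$ for every feasible $u$ is a cleaner justification than the paper's appeal to $q$ being concave and constant on $[6,+\infty)$.
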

\begin{proof}
When the values of $v_0$, $v_f$, $s_0$, $s_f$ are substituted one gets $r_1=v_0-s_f+s_0=1$ and $r_2=v_0-v_f=1$.
Hence, $(r_1,r_2)$ is in Region I. It is easy to check, from the definitions of the $V^i$s, that for $c\in (0,2/3)$ we have $(r_1,r_2,c)\in V^1$. This corresponds to Case 1 of Theorem~\ref{th:dualCarUnconstrIFF}. Therefore, we must have $q(c)=q^{(1)}(c)=-\dfrac{7}{6}
c^2 + (r_1 + r_2)c=-\dfrac{7}{6}
c^2 + 2c$ (because $r_1=r_2=1$). By Theorem~\ref{th:dualCarUnconstrIFF}, this dual function, together with the fact that $(r_1,r_2,c)\in V^1$, is equivalent to having case $(1A)$, namely that $h_1(u_c)>0$ and $h_2(u_c)>0$.
This implies that $\alpha_1(u_c)=1$ and $\alpha_2(u_c)=1$. Hence, \eqref{eq:minimizernCarUnconstr}  gives  $u_c(t)=ct-2c$.\\
For $c\in \left[\dfrac{2}{3},6\right)$, it is easy to check that $(r_1,r_2,c)\in V^7$, and hence Case 7 of Theorem~\ref{th:dualCarUnconstrIFF} holds. According to this theorem and substituting $r_1=r_2=1$ in the expression for $q$ in $(7C)$, we must have the dual function $q(c)=q^{(7)}(c)=-\dfrac{c^2}{24}+\dfrac{c}{2}+\dfrac{1}{2}$. Since $(7C)$ implies $(7A)$, we must have $h_1(u_c)>0$ and $h_2(u_c)=0$. Therefore, $\alpha_1(u_c)=1$. By \eqref{eq:minimizernCarUnconstr} we have $u_c(t)=ct-c-c\alpha_2(u_c)$. Using $c\alpha_2(u_c)=\dfrac{-c+2r_2}{2}$ from \eqref{eq:cr_case7_1} and $r_2=1$, we obtain $u_c(t)=ct-\dfrac{c+2}{2}$.\\
Finally, note that $(r_1,r_2,c)\in V^9$ whenever $c\ge 6$. Therefore, we are in Case 9 of Theorem~\ref{th:dualCarUnconstrIFF} and hence $h_1(u_c)=h_2(u_c)=0$. In this case, the dual function becomes constant, and substituting $r_1=r_2=1$ in the expression for $q$ in $(9C)$ gives
 $q(c)=q^{(9)}=2$ for $c\in [6,+\infty)$. Using \eqref{eq:uabUnconstr} and \eqref{eq:a&b_case9} in the proof of Theorem~\ref{th:dualCarUnconstrIFF}, we obtain the control variable $u_c(t)=6t-4$. Since $u_c$ is feasible and $q$ is a concave function that becomes constant for $c\ge 6$, $q$ has been maximized and therefore $u_c(t)=6t-4$ is the solution of (P).
\end{proof}

\begin{remark}[Case 9, \boldmath{$(r_1,r_2)=(0,0)$}, is equivalent to \boldmath{$u_c=0$} for all \boldmath{$c>0$}]
\label{rem:r1_r2_0} \rm
In\linebreak Theorem~\ref{th:dualCarUnconstrIFF}, Case 9 corresponds to $(r_1,r_2)=(0,0)$. Recall from Equation~\eqref{eq:r1_r2} that 
\[
r_1=v_0+s_0-s_f, \quad \mbox{and} \quad r_2=v_0-v_f,
\]
It is easy to check that the case $(r_1,r_2)=(0,0)$ corresponds to Case 9 in Theorem~\ref{th:dualCarUnconstrIFF}. Using column (B) in Table~\ref{table:thm_9cases} for $(r_1,r_2)=(0,0)$ 
gives $u_c(\cdot)=0$ for all $c>0$. This means no external force is imposed on the car. Physically, the car keeps the initial state from the beginning. Namely, if the car is moving with a certain speed in the beginning, it will keep that speed and keep moving; if the car is initially stationary, then it will remain so. Indeed, this can be interpreted from further computation of the final state. Using $r_1=0$ and the first equation in \eqref{eq:r1_r2}, we obtain $s_f-s_0=v_0\cdot 1$, which can be interpreted as the displacement is moving at the initial velocity for the whole time. Using $r_2=0$ and the second equation in \eqref{eq:r1_r2}, we obtain $v_0=v_f$, which can be interpreted as the initial velocity equals to the final velocity. 

Conversely, when $u_c(\cdot)=0$ for all $c>0$, we must have $(r_1,r_2)=(0,0)$. Indeed, let us refer to Table~\ref{table:thm_9cases}, and check the nine possible expressions for $u_c(\cdot)$. Since $c>0$, we have trivially that $u_c(\cdot)\neq0$ for cases 1--4 and cases 7, 8. We now show that $u_c(\cdot)\neq0$ for Case 5. In this case, $u_c(t)=\dfrac{-3c+6r_1}{2}t+\dfrac{c-6r_1}{2}$. Setting both coefficients of this affine function to $0$, we have $c=2r_1$ and $c=6r_1$. Since this equality holds for all $c>0$,  we have a contradiction. A similar argument shows that $u_c(\cdot)\not=0$ 
for Case 6. Finally, let us check whether $u_c(\cdot)$ can be $0$ in Case 9. In the latter case, we have $u_c(t)=(12r_1-6r_2)t-6r_1+2r_2$. This expression gives us the following simultaneous equations
\[
 \left\{
\begin{array}{rl}
12r_1-6r_2=0,\\
-6r_1+2r_2=0,
\end{array}
\right.
\]
which yield $r_1=0$ and $r_2=0$. Altogether, we have shown that the control variable $u_c(\cdot)=0$ for all $c>0$ if and only if $(r_1,r_2)=(0,0)$.
\endproof
\end{remark}

\begin{table}[!h]
\centering
{\small
\begin{tabular}{l | l| c}
 Region &  Dual function, $q(c)$ & Example B.C. \\
 \hline\hline &&\\[-4mm]
 $\begin{array}{l}
 \mbox{I} \\
 (r_1,r_2>0, \\
 \ r_2<9r_1/5)
 \end{array}$
 & 
 $ 
 \begin{array}{ll}
 q^{(1)}(c), &\mbox{ if } 0<c < 2r_2/3\,, \\
 q^{(7)}(c), &\mbox{ if } 2r_2/3\le c<12r_1-6r_2\,, \\
 q^{(9)}(c), &\mbox{ if } c\ge12r_1-6r_2\,.
 \end{array}$
& 
$\begin{array}{l}
s_0=0,\ s_f=0,\\
v_0=1,\ v_f=0
\end{array}$
\\[6mm] \hline &&\\[-4mm]
$\begin{array}{l}
 \mbox{II} \\
 (r_1,r_2>0, \\
 \ 9r_1/5<r_2\le 3r_1)
 \end{array}$
 &
  $ 
\begin{array}{ll}
 q^{(1)}(c), &\mbox{ if } 0<c<6r_1/5\,, \\
 q^{(5)}(c), &\mbox{ if } 6r_1/5\le c<-6r_1+4r_2\,, \\
 q^{(9)}(c), &\mbox{ if } c\ge-6r_1+4r_2\,.
 \end{array}$
&
$\begin{array}{l}
s_0=0,\ s_f=1,\\
v_0=2,\ v_f=-1/2.
\end{array}$
\\[6mm] \hline &&\\[-4mm]
$\begin{array}{l}
 \mbox{III} \\
 (r_1,r_2>0, \\
 \ r_2>3r_1)
 \end{array}$
 &
  $ 
\begin{array}{ll}
 q^{(1)}(c), &\mbox{ if } 0<c<6r_1/5\,, \\
 q^{(5)}(c), &\mbox{ if } 6r_1/5\le c\le 6r_1\,, \\
 q^{(3)}(c), &\mbox{ if } 6r_1<c<2r_2\,,\\
 q^{(8)}(c), &\mbox{ if } 2r_2\le c<-12r_1+6r_2\,, \\
 q^{(9)}(c), &\mbox{ if } c\ge-12r_1+6r_2\,.\\
 \end{array}$
&
$\begin{array}{l}
s_0=0,\ s_f=3,\\
v_0=4,\ v_f=0.
\end{array}$
\\[11mm] \hline &&\\[-4mm]
$\begin{array}{l}
 \mbox{IV} \\
 (r_1\le 0, \ r_2>0)
 \end{array}$
 &
  $
 \begin{array}{ll}
 q^{(3)}(c), &\mbox{ if } 0<c<2r_2\,,\\
 q^{(8)}(c), &\mbox{ if } 2r_2\le c<-12r_1+6r_2\,, \\
 q^{(9)}(c), &\mbox{ if } c\ge-12r_1+6r_2\,.\\
 \end{array}$
&
$\begin{array}{l}
s_0=0,\ s_f=2,\\
v_0=1,\ v_f=0.
\end{array}$
\\[6mm] \hline &&\\[-4mm]
$\begin{array}{l}
 \mbox{V} \\
 (r_1, r_2<0, \\
 \ r_2>9r_1/5)
 \end{array}$
 &
  $
\begin{array}{ll}
 q^{(4)}(c), &\mbox{ if } 0<c<-2r_2/3\,,\\
 q^{(8)}(c), &\mbox{ if } -2r_2/3\le c<-12r_1+6r_2\,, \\
 q^{(9)}(c), &\mbox{ if } c\ge-12r_1+6r_2\,.\\
 \end{array}$
&
$\begin{array}{l}
s_0=0,\ s_f=1,\\
v_0=0,\ v_f=1.
\end{array}$
\\[6mm] \hline &&\\[-4mm]
$\begin{array}{l}
 \mbox{VI} \\
 (r_1, r_2<0, \\
 \ 3r_1\le r_2<9r_1/5)
 \end{array}$
 &
  $
\begin{array}{ll}
 q^{(4)}(c), &\mbox{ if } 0<c<-6r_1/5\,,\\
 q^{(6)}(c), &\mbox{ if } -6r_1/5\le c<6r_1-4r_2\,, \\
 q^{(9)}(c), &\mbox{ if } c\ge6r_1-4r_2\,.\\
 \end{array}$
&
$\begin{array}{l}
s_0=0,\ s_f=5,\\
v_0=0,\ v_f=12.
\end{array}$
\\[6mm] \hline &&\\[-4mm]
$\begin{array}{l}
 \mbox{VII} \\
 (r_1, r_2<0, \\
 \ r_2<3r_1)
 \end{array}$
 &
  $
\begin{array}{ll}
 q^{(4)}(c), &\mbox{ if } 0<c<-6r_1/5\,,\\
 q^{(6)}(c), &\mbox{ if } -6r_1/5\le c\le-6r_1\,, \\
 q^{(2)}(c), &\mbox{ if } -6r_1<c<-2r_2\,, \\
 q^{(7)}(c), &\mbox{ if } -2r_2\le c<12r_1-6r_2\,, \\
 q^{(9)}(c), &\mbox{ if } c\ge12r_1-6r_2\,.\\
 \end{array} $
&
$\begin{array}{l}
s_0=0,\ s_f=1,\\
v_0=0,\ v_f=5.
\end{array}$
\\[11mm] \hline &&\\[-4mm]
$\begin{array}{l}
 \mbox{VIII} \\
 (r_1\ge 0, \ r_2<0)
 \end{array}$
 &
  $
 \begin{array}{ll}
 q^{(2)}(c), &\mbox{ if } 0<c<-2r_2\,, \\
 q^{(7)}(c), &\mbox{ if } -2r_2\le c<12r_1-6r_2\,, \\
 q^{(9)}(c), &\mbox{ if } c\ge12r_1-6r_2\,.\\
 \end{array} $
&
$\begin{array}{l}
s_0=1,\ s_f=0,\\
v_0=0,\ v_f=1.
\end{array}$ 
\\[6mm] \hline &&\\[-4mm]
  $\begin{array}{ll}
 \mbox{IX} \, (r_1>0, \\
 r_2= 9r_1/5)
 \end{array}$
 &
 $ 
\begin{array}{ll}
 q^{(1)}(c), &\mbox{ if } 0<c<6r_1/5\,, \\
 q^{(9)}(c), &\mbox{ if } c\ge 6r_1/5(=12r_1-6r_2)\,.
 \end{array} $
 &
 $\begin{array}{l}
s_0=0,\ s_f=0,\\
v_0=5,\ v_f=-4.
\end{array}$
\\[4mm] \hline &&\\[-3mm]
   $\begin{array}{ll}
 \mbox{X} \, (r_1<0, \\
 r_2= 9r_1/5)
 \end{array}$
 &
$
 \begin{array}{ll}
 q^{(4)}(c), &\mbox{ if } 0<c<-6r_1/5\,, \\
 q^{(9)}(c), &\mbox{ if } c\ge -6r_1/5(=-12r_1+6r_2)\,.
 \end{array} $
 &
 $\begin{array}{l}
s_0=0,\ s_f=1,\\
v_0=0,\ v_f=9/5.
\end{array}$
\\[4mm] \hline &&\\[-3mm]
  $\begin{array}{ll}
  \mbox{XI}  \\
 (r_1>0,\, r_2=0)
 \end{array}$
 &
 $
 \begin{array}{ll}
 q^{(7)}(c), &\mbox{ if } 0<c<12r_1-6r_2\,, \\
 q^{(9)}(c), &\mbox{ if } c\ge 12r_1-6r_2\,.
 \end{array} $
 &
 $\begin{array}{l}
s_0=1,\ s_f=0,\\
v_0=0,\ v_f=0.
\end{array}$
 \\[4mm] \hline &&\\[-3mm]
  $\begin{array}{ll}
   \mbox{XII}\\
 (r_1<0,\,r_2=0)
 \end{array}$
 &
 $
 \begin{array}{ll}
 q^{(8)}(c), &\mbox{ if } 0<c<-12r_1+6r_2\,, \\
 q^{(9)}(c), &\mbox{ if } c\ge -12r_1+6r_2\,.
 \end{array} $
 &
  $\begin{array}{l}
s_0=0,\ s_f=1,\\
v_0=0,\ v_f=0.
\end{array}$
 \\[4mm] \hline 
   $\begin{array}{ll}
   \mbox{XIII}\\
 (r_1=0,\,r_2=0)
 \end{array}$
 &
  $
  \begin{array}{ll}
 q^{(9)}(c), &\mbox{ if } c> 0\,.
 \end{array}$
 &
  $\begin{array}{l}
s_0=0,\ s_f=0,\\
v_0=0,\ v_f=0.
\end{array}$\\
\hline
 \end{tabular}
 }
\caption{\sf The dual function in each of the regions as listed in Table~\ref{table:r1r2} and displayed in Figure~\ref{fig:r1r2relation}. The notation $q^{(i)}$, $i = 1,\ldots,9$, indicates the dual function derived in Case $i$ of Theorem~\ref{th:dualCarUnconstrIFF}.  The dual function $q(c)$ for a region is a concatenation of $q^{(i)}(c)$ as described.}
\label{table:r1r2Dual}
\end{table}

\clearpage

In what follows, we denote by $M_P$ the optimal value of Problem~(P), and by $S(P)$ and $D(P)$ the sets of primal and dual solutions of Problem~(P), respectively.

Next, we define the exact and least exact penalty parameters.  A positive number $\bar c$ is an {\em exact penalty parameter} for Problem~(P) if for every $c\ge \bar c$ we have ${\rm argmin \,}L(\cdot, c)\subset S(P)$.  In particular, $q(c)=M_P$, for $c\ge \bar c$.  On the other hand, a positive number $\bar c$ is the {\em least exact penalty parameter} for Problem~(P) when the following two properties hold.
\begin{itemize}
    \item[(i)] $\bar c$ is an {exact penalty parameter} for Problem~(P), and 
    \item[(ii)] for every $c< \bar c$ we have ${\rm argmin}\, L(\cdot,c)\cap S(P)=\emptyset$.
\end{itemize}

\begin{corollary}[Analytical solution for the least exact penalty parameter]
\label{cor:solution_exactParameter_p1}
With the assumptions of Theorem \ref{th:dualCarUnconstrIFF}, the following hold.
\begin{itemize}
    \item [$i)$] The control and state variables obtained by the PDP method as solution of Problem~(P) comply with the analytical solutions \eqref{solnCarUnconstr} and \eqref{cPenaltyCarUnconstr}.
    \item[$ii)$]The least exact penalty parameter for Problem~(P) is given by
    \[
    \bar c = \max\{|12r_1-6r_2|,|-6r_1 + 4r_2|\}\,, 
    \]
    with the optimal dual value $\bar{q} := q(\bar{c}) = 6r_1^2-6r_1r_2+ 2r_2^2$.
\end{itemize}
\end{corollary}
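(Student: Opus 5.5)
The whole argument will rest on Theorem~\ref{th:dualCarUnconstrIFF} together with the partition given by Remark~\ref{rk:cr_union} and Lemma~\ref{lm:crSet_disjoint}. For any instance $(r_1,r_2)$ and any $c>0$, the triple $(r_1,r_2,c)$ lies in exactly one $V^i$. By the theorem, $u_c$ is feasible (i.e.\ $h(u_c)=0$) if and only if we are in Case~9, which is equivalent to $(r_1,r_2,c)\in V^9$. By definition of $V^9$, this holds exactly when $c\ge \bar c:=\max\{|12r_1-6r_2|,|-6r_1+4r_2|\}$.

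For part $i)$ I will take $c\ge\bar c$ and read off $u_c^{(9)}(t)=(12r_1-6r_2)t-6r_1+2r_2$ from Table~\ref{table:thm_9cases}. Substituting $r_1=v_0+s_0-s_f$ and $r_2=v_0-v_f$ gives $12r_1-6r_2=-12(s_f-s_0)+6(v_0+v_f)=c_1$ and $-6r_1+2r_2=6(s_f-s_0)-2(2v_0+v_f)=c_2$, which is exactly \eqref{cPenaltyCarUnconstr}. Integrating twice with the initial conditions then reproduces $x_1,x_2$ in \eqref{solnCarUnconstr}.

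To confirm that this is indeed the solution of (P), I will use a weak-duality argument. For any feasible $u$, $L(u,c)=\varphi(u)$, so $q(c)\le M_P$. When $c\ge\bar c$, the minimizer $u_c$ is feasible, so $M_P\le\varphi(u_c)=L(u_c,c)=q(c)\le M_P$. Hence $u_c\in S(P)$, and by uniqueness in Lemma~\ref{lem:subgrad_p1}(ii), ${\rm argmin}\,L(\cdot,c)=\{u_c\}\subset S(P)$. This shows that $\bar c$ is an exact penalty parameter. Evaluating $q^{(9)}$ at $\bar c$ gives $\bar q=6r_1^2-6r_1r_2+2r_2^2$, and a direct check that $\tfrac12\int_0^1(c_1t+c_2)^2dt$ equals this value serves as a consistency test.

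For the minimality property (ii), I will take $0<c<\bar c$. Then $(r_1,r_2,c)\notin V^9$, so some $V^i$ with $i\le 8$ contains it, and by the theorem at least one of $h_1(u_c),h_2(u_c)$ is nonzero. Thus $u_c$ is infeasible and ${\rm argmin}\,L(\cdot,c)\cap S(P)=\emptyset$.

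The main obstacle is the degenerate case $\bar c=0$, i.e.\ $(r_1,r_2)=(0,0)$ (Region~XIII), where ``positive'' fails. There $V^9$ contains all $c>0$ and every $c>0$ is exact, so the statement should be read as $\bar c=0$ in the infimum sense. Otherwise, the step requiring care is making sure that the boundary $c=\bar c$ itself belongs to $V^9$ and to no other $V^i$. This is guaranteed because $V^9$ uses $\ge$ while the other sets use strict upper bounds. If needed, I would verify it region by region using Table~\ref{table:r1r2Dual} and \eqref{eq:maxIntersect}, noting that in each region the last breakpoint listed equals the maximum of $M_1$ and $M_2$.
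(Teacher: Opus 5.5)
Your proposal is correct and follows the paper's proof essentially step for step: $u_c$ is feasible exactly in Case~9, i.e.\ when $(r_1,r_2,c)\in V^9$, i.e.\ when $c\ge\bar c$. You then match $u^{(9)}_c$ with \eqref{cPenaltyCarUnconstr}, and for $c<\bar c$ only the infeasible Cases 1--8 remain. Your weak-duality chain $M_P\le\varphi(u_c)=q(c)\le M_P$ is a slightly more self-contained replacement for the paper's appeal to strong duality, and your caveat that $(r_1,r_2)=(0,0)$ gives $\bar c=0$, which is not a positive number, is a genuine point the paper does not address. One small correction: $c=\bar c$ lies in no other $V^i$ by Lemma~\ref{lm:crSet_disjoint}, not because the other sets use strict upper bounds, since $V^5$ and $V^6$ have non-strict ones.
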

\begin{proof}
\ $ (i)$ By the stopping criteria in iteration $k$ of the PDP method, we have $z_k=0$ (Algorithm~\ref{alg:PDP}, Step 1(b)), which means that $\sigma(h(u_{c_k}))=|h_1(u_{c_k})|+|h_2(u_{c_k})|=0$. This happens if and only if $h_1(u_{c_k})=0$ and $h_2(u_{c_k})=0$, which corresponds to Case 9 in Theorem~\ref{th:dualCarUnconstrIFF}. Let us now compute the optimal control $u_{c_k}(t)$ for this case. From the proof of Theorem \ref{th:dualCarUnconstrIFF}, Case 9, equation \eqref{eq:a&b_case9}, we have $a=12r_1-6r_2$, and $b=-6r_1+2r_2$. Substituting $r_1$ and $r_2$ given in \eqref{eq:r1_r2} into $a$ and $b$, we obtain
\begin{equation}\label{eq:abP1}
 a=-12(s_f-s_0)+6(v_0+v_f)\quad \mbox{and} \quad b=6(s_f-s_0)-2(2v_0+v_f).   
\end{equation}
Combining the assumption that $u_{c_k}(t) =at+b$ from \eqref{eq:uabUnconstr}, we have established that the optimal control complies with \eqref{solnCarUnconstr} and \eqref{cPenaltyCarUnconstr}. We now show the solution for the optimal state variables $x_1(t)$ and $x_2(t)$. Recall the expression~\eqref{eq:xP1reformulation} for $x_1(t)$ and $x_2(t)$  from the reformulated Problem~$(P1)$. We substitute $u_{c_k}$ of \eqref{eq:uabUnconstr} into \eqref{eq:xP1reformulation} and obtain
\begin{equation}\label{eq:abxP1}
    x_2(t)=\dfrac{1}{2}at^2+bt+v_0 \quad \mbox{and} \quad
    x_1(t)=\dfrac{1}{6}at^3+\dfrac{1}{2}bt^2+v_0t+s_0,
\end{equation}
together with \eqref{eq:abP1}, we have established the optimal state variables comply with the solutions depicted in  \eqref{solnCarUnconstr} and \eqref{cPenaltyCarUnconstr}.

We now show $(ii)$.  By Theorem~\ref{th:dualCarUnconstrIFF} and the definition of $\bar c$, if $c\ge \bar c$ then we are in Case 9 and hence $(h_1(u_c),h_2(u_c))=(0,0)$. Hence, the (unique) Lagrangian minimizer $\{u_c\}={\rm argmin \,}L(\cdot, c)$ is a solution for $c\ge \bar c$. Indeed, in this situation, we can write
\[
\bar q\ge q(c)=L(u_c,c)=\varphi(u_c)\ge M_P,
\]
where we used in the first equality the definition of $q$ and the fact that $u_c$ is a minimizer of $L(\cdot,c)$. In the second equality and in the last inequality we used the fact that $h_1(u_c)=h_2(u_c)=0$ so $u_c$ is feasible for (P). By strong duality, all values in the expression above coincide, and we deduce that whenever $c\ge \bar{c}$ the solution $u_c\in S(P)$ and $c\in S(D)$. Consequently, $q(c)= \bar q=M_P$ for $c\ge \bar c$. Therefore, $\bar c$ is an exact penalty parameter. To verify that  $\bar c$ is the least exact penalty parameter, we need to show that $\bar c$ satisfies condition (ii) of the definition of the least exact penalty parameter. That is, we will show that ${\rm argmin \,} L(\cdot,c)\cap S(P)=\emptyset$ if $c<\bar c$. Since $c<\bar c$, by Theorem~\ref{th:dualCarUnconstrIFF}, Case 9 cannot happen and only Cases 1--8 can happen for this value of $c$. For all these eight cases, we have $|h_1(u_c)|+|h_2(u_c)|>0$. In particular, $u_c$ is not feasible for Problem~(P), and hence $u_c\not\in S(P)$. Consequently, ${\rm argmin \,}L(\cdot,c)\cap S(P)= \{u_c\}\cap S(P)=\emptyset$ and condition (ii) of the definition of the least exact penalty parameter holds.
\end{proof}

\subsection{Behaviour of the constraint functions}
\label{sec:constraints_p1}
The function $h:=(h_1,h_2)$ defined in \eqref{eq:h_p1} describes the constraints of Problem~(P). For a fixed $c>0$, the uniqueness of the minimizer $u_c$ of the Lagrangian allows us to define a map $c\mapsto (h_1(u_c),h_2(u_c))$. More precisely, define $z:\R_+ \to \R^2$ such that
 \begin{equation}\label{eq:z_P1}
 z(0):= \left[ 
 \begin{array}{l}
     r_1  \\[2mm]
     r_2 \\
 \end{array}
 \right]; \quad \mbox{and} \quad
 z(c):=  \left[ 
 \begin{array}{l}
     z_1(c)  \\[2mm]
     z_2(c) \\
 \end{array}
 \right]
 :=\left[ 
 \begin{array}{l}
     h_1(u_c)  \\[2mm]
     h_2(u_c) \\
 \end{array}
 \right],\ \mbox{ for} \: c>0.
 \end{equation}

By means of the proof of Theorem \ref{th:dualCarUnconstrIFF}, and using \eqref{eq:h_alpha}, we obtain an explicit expression for $(h_1(u_c),h_2(u_c))$ as a function of $c$, for each of the cases, according to the signs of $h_1(u_c)$ and $h_2(u_c)$.  

The definitions in~\eqref{eq:z_P1} provide parametric equations with parameter $c$. The function $z(c)$  provides a measure of feasibility for the corresponding $u_c$ at each value of $c$ as updated by the PDP algorithm. This allows us to study the behaviour of $z$ as the dependent variable of a dynamical system with the independent variable $c$. Then, the trajectory $z(c)$ will effectively show the behaviour, or the evolution of the constraint functions $h_1(u_c)$ and $h_2(u_c)$ towards $(0,0)$ (the solution) as $c$ increases. 

We illustrate various trajectories $z(c)$ in Figure~\ref{fig:subfiguresz1z2}, starting at various points $(r_1,r_2)$, depending on the conditions described in Table~\ref{table:r1r2Dual}. Recall that there are 13 different zones, or regions, where $(r_1,r_2)$ might be. With the application of our PDP algorithm the trajectory $z(c)$ always terminates at $z(\overline{c})=(0,0)$ (i.e., we have a feasible point), the origin of Figure~\ref{fig:subfiguresz1z2}, where the constraints are satisfied with $(h_1(u_{\bar c}),h_2(u_{\bar c}))=(0,0)$, and $\overline{c}$ is the least exact penalty parameter (or greater if we overstep).

\begin{figure}[t!]
  \begin{subfigure}{\textwidth}
  \centering
  \includegraphics[width=0.42\textwidth]{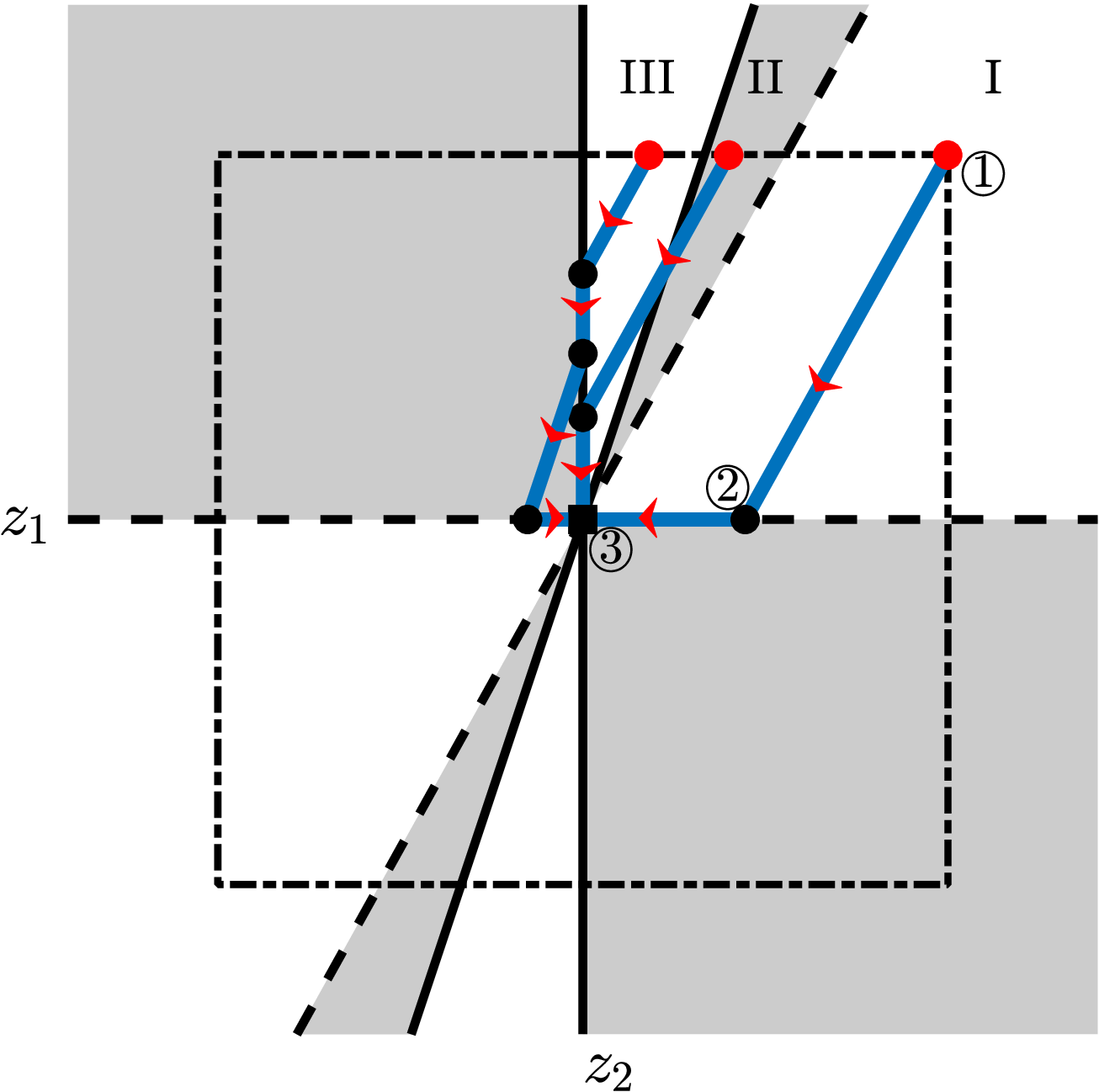} \hspace{7mm}  
  \includegraphics[width=0.42\textwidth]{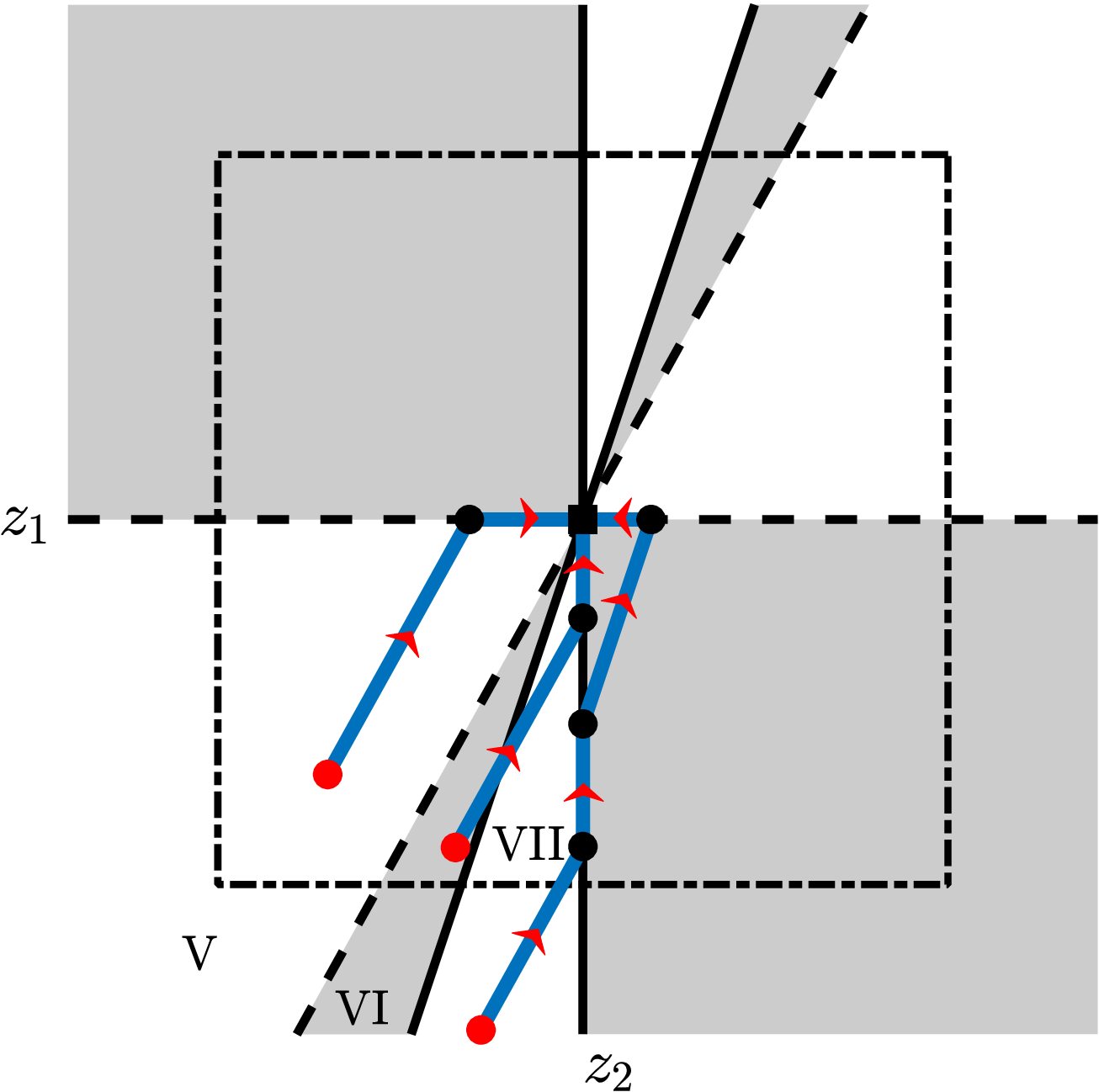}
  \caption{\small\sf Starting in Regions I, II, III (left) and V, VI, VII (right).}
  \label{fig:h1h2_1_3}
\end{subfigure}
\ \\[2mm] 
\begin{subfigure}{\textwidth}
\centering
\includegraphics[width=0.42\textwidth]{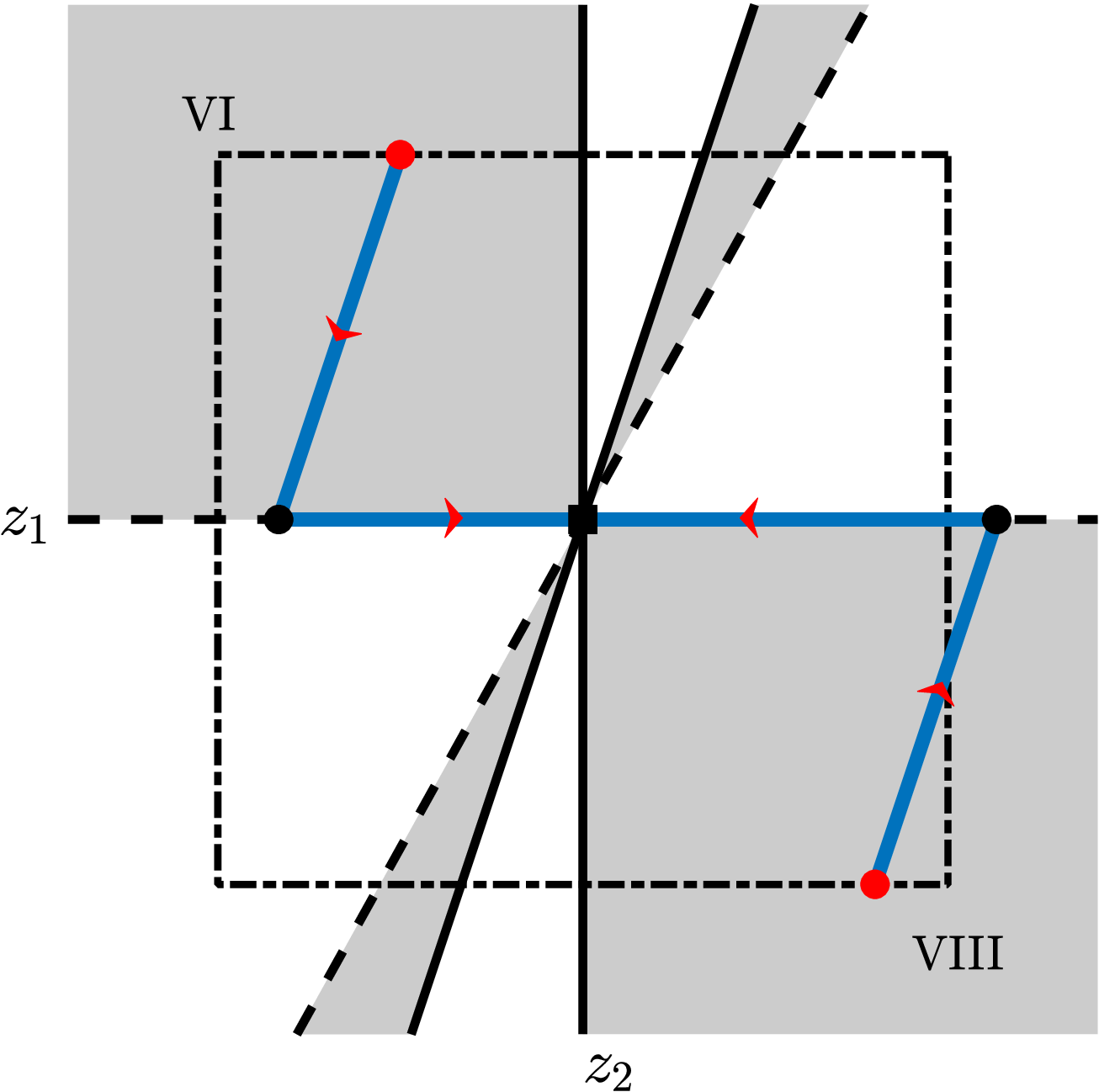} \hspace{7mm}
\includegraphics[width=0.42\textwidth]{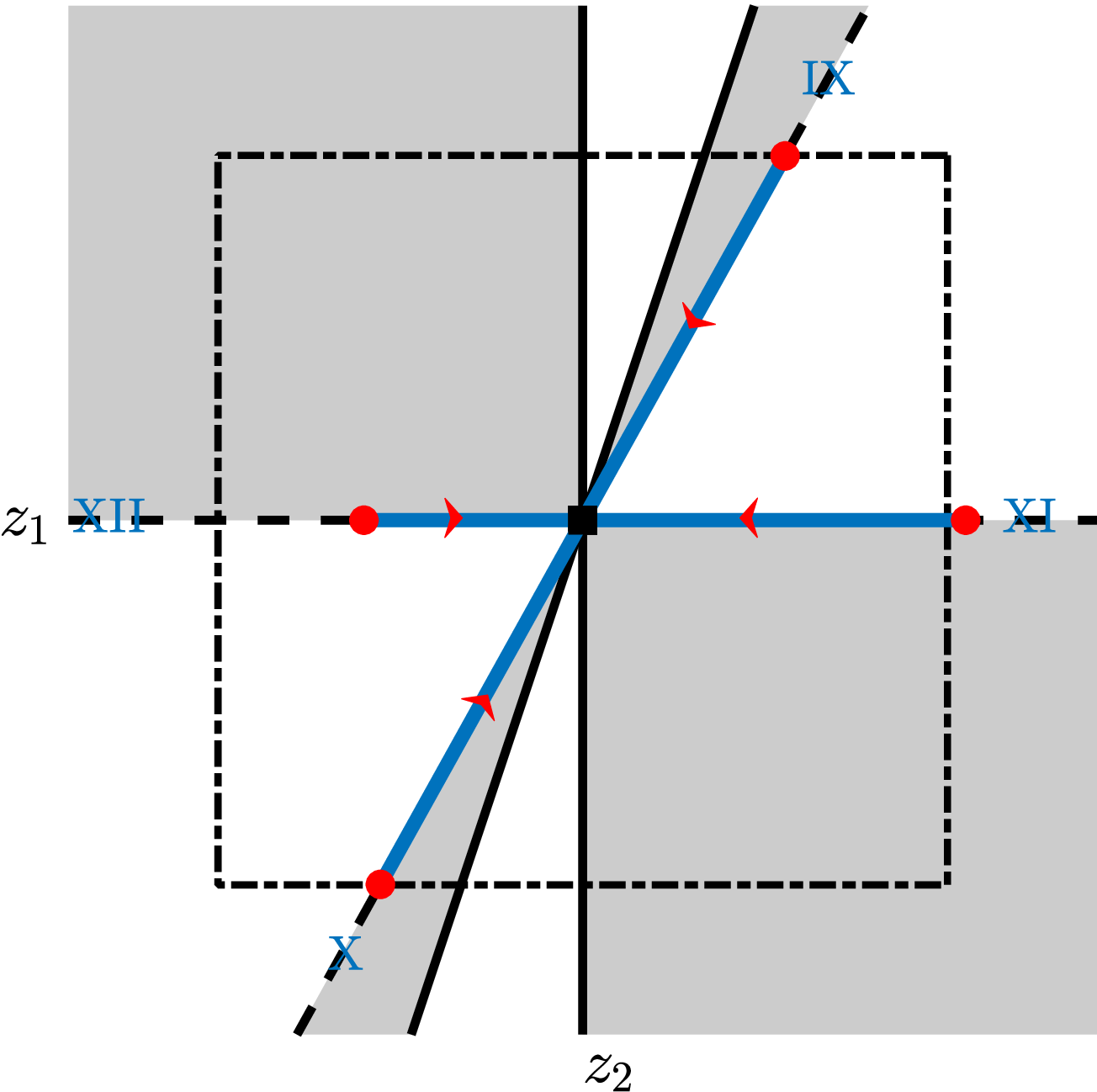}
\caption{\small\sf Starting in Regions {IV}, VIII (left) and IX, X, XI, XII (right).}
\label{fig:h1h2_2_4}
\end{subfigure}
\caption{\sf $z_2$ vs $z_1$ for the example boundary conditions given for the regions listed in~Table~\ref{table:r1r2Dual}. The dashed-dotted square drawn in each diagram has the top-right corner coordinate $(1,1)$.}
\label{fig:subfiguresz1z2}
\end{figure}

Each trajectory $z(c)$ is a concatenation of linear segments, as shown in Figure~\ref{fig:subfiguresz1z2}, except for the case when $(r_1,r_2)=(0,0)$, in which the trajectory $z(c)$ is a single point at the origin, for all $c>0$.  We explain a particular trajectory of Figure~\ref{fig:h1h2_1_3} in the following example, whose starting point is in Region I. Recall that the definitions of Regions~I--XIII are given in Table~\ref{table:r1r2}.

The example trajectory we want to elaborate on starts from the (red) solid circular point indicated by \circled{1}, at the coordinates $(r_1,r_2) = (1,1)$.  This, in particular, is the case where $v_0=1$, $v_f=0$, $s_0=0$, and $s_f=0$, the solution for which is shown in Figure~\ref{fig:solnUnconstr}. The initial part of the trajectory $z(c)$ is a linear segment that arrives at the black switching point \circled{2}: $\left(r_1-\dfrac{5}{9}r_2,0\right)$, hitting the $z_1$-axis. The trajectory finally approaches, moving along the $z_1$-axis, to the terminal point \circled{3}, the origin $(0,0)$, marked with a black square. 

We include in Table~\ref{table:ztrajectoryDescribe} the coordinates of the starting, switching, and terminal points of each trajectory according to the region in which the initial point $(r_1,r_2)$ lies. We have drawn a square centred at the origin using dashed-dotted lines, with the coordinates of the top-right corner $(1,1)$, to give an idea of the scale of the trajectories for the chosen examples.

\begin{table}[t!]
\centering
{\small
\begin{tabular}{r| l}
Region  & Trajectory of $z(c)$ as line segments between indicated points \\[1mm]
\hline\\[-3mm]
 I, V  &  $(r_1,r_2)\longmapsto (r_1-5r_2/9,0)\longmapsto (0,0)$ \\[1mm]
 II, VI  &  $(r_1,r_2)\longmapsto (0,-9r_1/5+r_2)\longmapsto (0,0)$ \\[1mm]
 III, VII  &  $(r_1,r_2)\longmapsto (0,-9r_1/5+r_2)\longmapsto (0,-3r_1+r_2)\longmapsto (r_1-r_2/3,0)\longmapsto (0,0)$ \\[1mm]
 IV, VIII  &  $(r_1,r_2)\longmapsto (r_1-r_2/3,0)\longmapsto (0,0)$ \\[1mm]
 IX, X, XI, XII  &  $(r_1,r_2)\longmapsto (0,0)$ \\[1mm]\hline
\end{tabular}
}
\caption{\sf Key junction points along the trajectory of $z(c)$.}
\label{table:ztrajectoryDescribe}
\end{table}

We specify the ranges of $c$ for each segment of the trajectory $z(c)$ in Table~\ref{table:segment_z}. We also write the parametric expressions for $z(c)$ according to the nine cases from the proof of Theorem~\ref{th:dualCarUnconstrIFF} in Table~\ref{table:z1z2parametricP1}, and include the lines (or trajectories) in the $z_1z_2$-plane in Table~\ref{table:z1z2P1}.

Take again the example in Figure~\ref{fig:h1h2_1_3}, with the starting point $(1,1)$, labelled \circled{1}, in Region I. As $c$ increases, one gets the trajectory segment \circled{1} $\to$ \circled{2} for $c\in[0,2r_2/3)$, which is segment 1, Region I, in Table~\ref{table:segment_z}. As $c$ increases from $2r_2/3$, one gets the trajectory segment \circled{2} $\to$ \circled{3} for $c\in[2r_2/3,12r_1-6r_2]$, which is segment 2, Region I, Table~\ref{table:segment_z}. The trajectory ends at \circled{3}, when $c=12r_1-6r_2$.

\begin{table}[t!]
\centering
{\small
\begin{tabular}{c | l| c| l} 
Region & Trajectory line segments & Region & Trajectory line segments\\
  with &  of $z(c)$ & with &  of $z(c)$ \\
   $z(0)$ & &  $z(0)$ &\\
 [1mm] \hline &&&\\
 I
 & 
$
\begin{array}{ll}
 1: & 0\le c < 2r_2/3 \\
 2: & 2r_2/3\le c \le 12r_1-6r_2
 \end{array} $
&
V
 &
  $\begin{array}{ll}
 1: & 0\le c<-2r_2/3\,,\\
 2: & -2r_2/3\le c\le-12r_1+6r_2 
 \end{array} $
 \\ &&&\\
 \hline &&&\\
II
 &

$\begin{array}{ll}
 1: & 0\le c<6r_1/5 \\
 2: & 6r_1/5\le c \le -6r_1+4r_2
 \end{array} $
&
VI
 &
  $\begin{array}{ll}
 1: & 0\le c<-6r_1/5\,,\\
 2: & -6r_1/5\le c\le6r_1-4r_2 
 \end{array} $
 \\ &&&\\
 \hline &&&\\
 III
 &
$\begin{array}{ll}
 1: & 0\le c<6r_1/5 \\
 2: & 6r_1/5\le c\le 6r_1 \\
 3: & 6r_1<c<2r_2 \\
 4: & 2r_2\le c \le -12r_1+6r_2
 \end{array} $
&
VII
 &
  $\begin{array}{ll}
 1: & 0\le c<-6r_1/5\,,\\
 2: & -6r_1/5\le c\le-6r_1\,, \\
 3: & -6r_1<c<-2r_2\,, \\
 4: & -2r_2\le c\le 12r_1-6r_2 
 \end{array} $
 \\ &&&\\
 \hline &&&\\
IV
 &
  $\begin{array}{ll}
 1: & 0\le c<2r_2 \\
 2: & 2r_2\le c\le-12r_1+6r_2 
 \end{array} $
&
VIII
 &
  $\begin{array}{ll}
 1: & 0\le c<-2r_2\,, \\
 2: & -2r_2\le c\le12r_1-6r_2 
 \end{array} $
 \\ &&&\\
 \hline\hline &&&\\
 IX
 &
 $\begin{array}{ll}
 1: & 0\le c\le6r_1/5 
 \end{array} $
 &
 X
 &
 $\begin{array}{ll}
 1: & 0\le c\le-6r_1/5 
 \end{array} $
 \\ &&&\\
 \hline &&&\\
  XI
 &
 $\begin{array}{ll}
 1: & 0\le c\le12r_1-6r_2 
 \end{array} $
 &
  XII
 &
 $\begin{array}{ll}
 1: & 0\le c\le-12r_1+6r_2 
 \end{array} $
 \\[3mm]
   \hline
 \end{tabular}
 }
\caption{\sf The range of $c$ for each line segment of the trajectory $z(c)$ whose starting regions are as in Table~\ref{table:r1r2}.}
\label{table:segment_z}
\end{table}

\begin{table}[t!]
        \centering
        {\small
 \begin{tabular}{c| l| c| l}
  Case  &  $z(c) = (z_1(c),z_2(c))$  & Case  &  $z(c) = (z_1(c),z_2(c))$\\ [1mm]
\hline &&&\\[-3mm]
 1  &  $\left(-5c/6+r_1,\ -3c/2+r_2\right)$  &  4  &  $\left(5c/6+r_1,\ 3c/2+r_2\right)$ \\
  &&&\\
  2  &  $\left(c/6+r_1,\ c/2+r_2\right)$ & 3  &  $\left(-c/6+r_1,\ -c/2+r_2\right)$ \\
  &&&\\
   5  &  $\left(0,\ -c/4-3r_1/2+r_2\right)$  &  6  &  $\left(0,\ c/4-3r_1/2+r_2\right)$  \\
  &&&\\
    7  &  $(-c/12+r_1-r_2/2,\ 0)$ & 8  &  $(c/12+r_1-r_2/2,\ 0)$ \\ 
  &&&\\
     9  &  $(0,\ 0)$ & & \\[1mm]\hline
 \end{tabular}
 }
\caption{\sf $(z_1(c),\,z_2(c))$ for Cases 1--9.}
\label{table:z1z2parametricP1}
\end{table}
 
\clearpage

\begin{table}[t!]
\centering
{\small
\begin{tabular}{c| c| c| c| c| c}
Case  & Eqn of line & Case  & Eqn of line  & Case  & Eqn of line \\ [1mm]
\hline &&&&\\[-3mm]
1 \& 4  &  $z_2 = 9z_1/5-9r_1/5+r_2$  & 5 \& 6  &  $ z_1 = 0$ & 9  &  $(z_1,z_2) = (0,0)$\\
&&&&\\
2 \& 3  &  $ z_2 = 3z_1-3r_1+r_2$  &  7 \& 8  &  $ z_2 = 0$ & & \\[1mm]\hline
\end{tabular}
}
\caption{\sf Lines in the $z_1z_2$-plane.}
\label{table:z1z2P1}
\end{table}

Interpretations of various other trajectories listed in Table~\ref{table:ztrajectoryDescribe} and depicted in Figure~\ref{fig:subfiguresz1z2} can be found in~\cite{Liu2022}.

The trajectory $z(c)$ shows a correspondence relation between the change in the signs of the constraint functions and the structure of the dual functions. Consider again the example problem with the boundary condition values $s_0= 0$, $s_f= 0$, $v_0= 1$, and $v_f= 0$, i.e., row 1, column 3 in Table~\ref{table:r1r2Dual}. It is the one illustrated in Region I, Figure~\ref{fig:h1h2_1_3} left. In this figure, we can see the changes in the signs of the constraint functions $[h_1(c),h_2(c)]$: for the section \circled{1} -- \circled{2}, we have $h_1>0$ and $h_2>0$; for the section \circled{2} -- \circled{3}, we have $h_1>0$ and $h_2=0$; and at \circled{3} - the origin, we have $h_1=0$ and $h_2=0$. Using Table~\ref{table:r1r2Dual}, we see that the dual function of this problem has three cases: $q^{(1)}$, $q^{(7)}$ and $q^{(9)}$, corresponding to Cases 1, 7 and 9, respectively, in Theorem~\ref{th:dualCarUnconstrIFF}. These cases match the three situations we discussed previously in Region I, Figure~\ref{fig:h1h2_1_3} (left).

\subsection{The PDP iterates of Problem~(P) under different choices of the step-size}
\label{sec:iterates_p1}

We will now elaborate on the outcome of our experiments using four different choices of the step-size, two of which are proposed in the present paper.  We consider again the boundary values $s_0= 0$, $s_f= 0$, $v_0= 1$, and $v_f= 0$, shown in row 1 and column 3 of Table~\ref{table:r1r2Dual}. In Figure \ref{fig:step-sizeDual12}, we show the iterations of $c$ against the dual function $q(c)$ using the step-size as in \eqref{sk_DSG1} in Figure \ref{fig:step-sizeDual1} and the step-size as in \eqref{sk_DSG2} in Figure \ref{fig:step-sizeDual2}. The solid (blue) curve is the graph of the dual function $q(c)$ and the solid (red) dots are the iterates generated by the PDP algorithm.\\
In these experiments, we have set the PDP algorithmic parameters $\alpha_k=0.5$, $c_0=0.1$. With the step-size as in \eqref{sk_DSG1}, we have set $\eta=0.1$, $\beta=4$, and we have taken $s_k=0.5\eta_k+0.5\beta_k$. In the step-size as in \eqref{sk_DSG2}, we have set $\theta_k=\beta=1$, which results in $s_k = 1/\|h(u_k)\|_1$ and so $c_{k+1} = c_k + \tilde{s}_k\,\|h(u_k)\|_1 = c_k + 1.5$, since $\tilde{s}_k = 1.5\,s_k$ and $s_k = 1/\|h(u_k)\|_1$. It takes $38$ iterations with the step-size in~\eqref{sk_DSG1}  to find the solution and just $4$ iterations with the step-size in \eqref{sk_DSG2}. With the  step-size in \eqref{sk_DSG1}, most iterations are performed when the dual value $q(c_k)>1.99$ ($37$ out of $38$ iterations), with the dual optimal value  $\bar q=2$ \textcolor{black}{ when $c>5.9$ ($32$ out of $38$)}.
 \begin{figure}[t!]
  \centering
  \begin{subfigure}{.49\textwidth}
\centering
  \includegraphics[width=\textwidth]{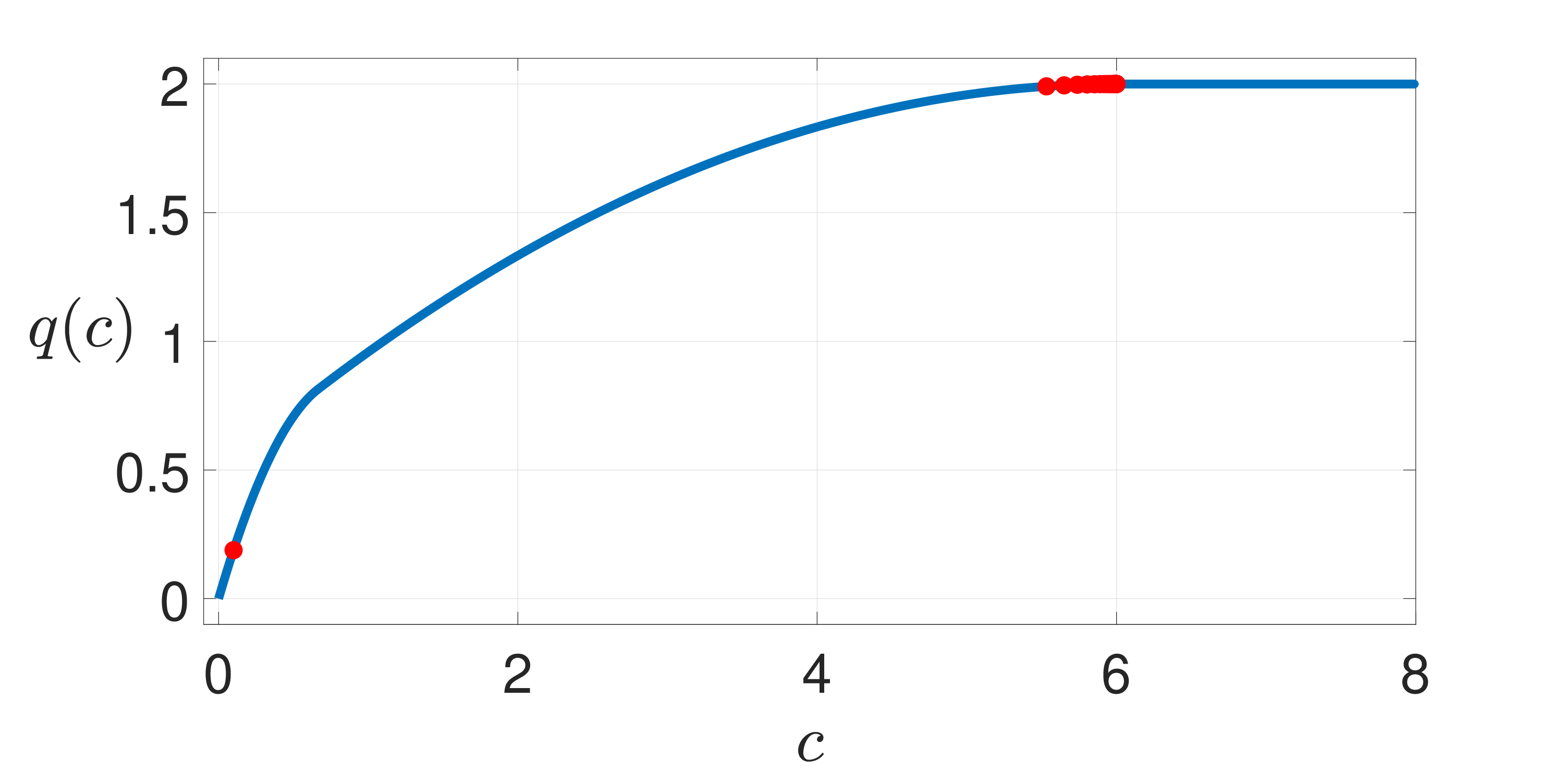}
  \caption{\small Algorithm~PDP with $s_k$ in \eqref{sk_DSG1}.}
  \label{fig:step-sizeDual1}
\end{subfigure}
\begin{subfigure}{.49\textwidth}
 \centering
  \includegraphics[width=\textwidth]{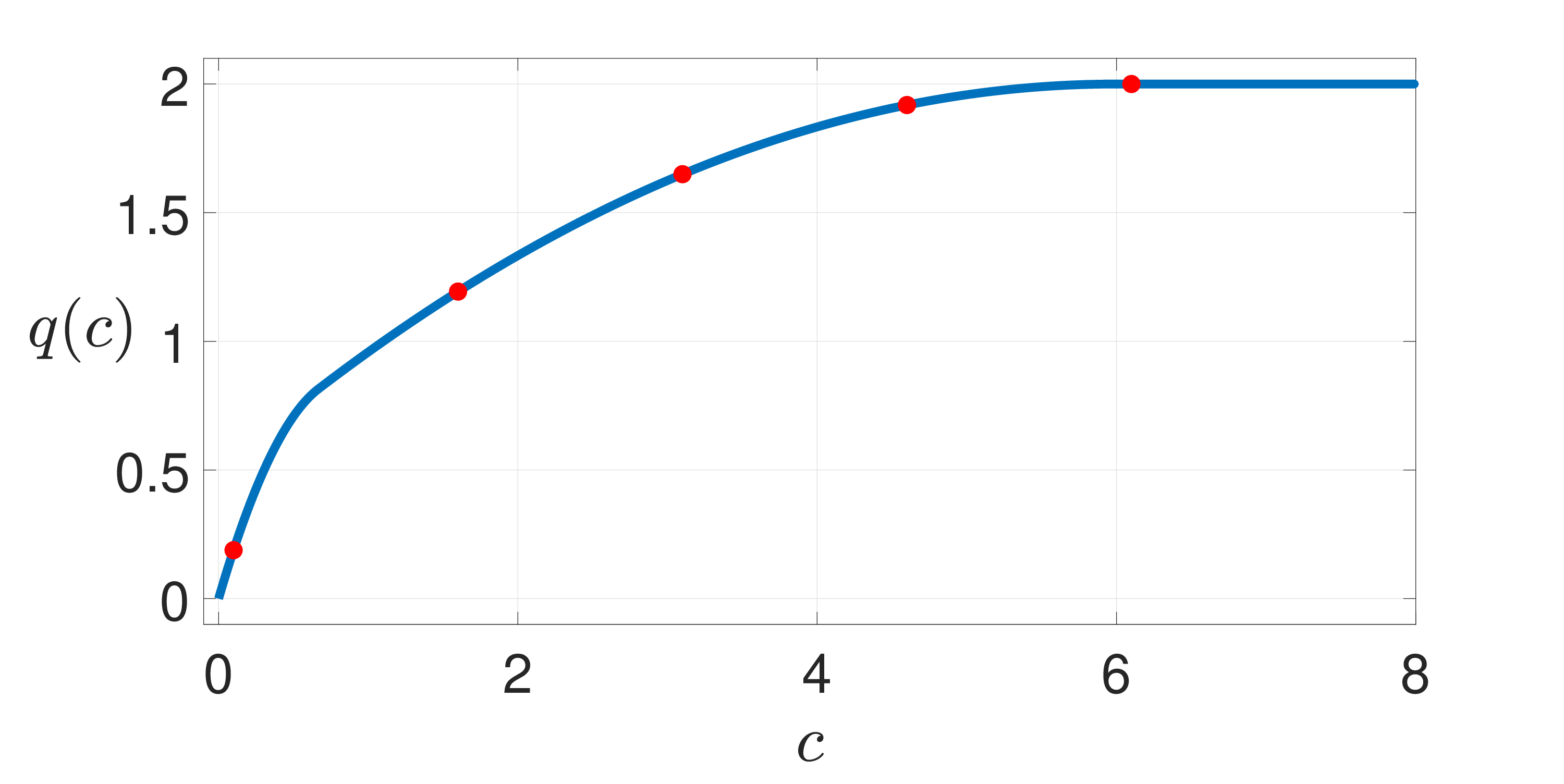}
  \caption{\small Algorithm~PDP with $s_k$ in \eqref{sk_DSG2}.}
  \label{fig:step-sizeDual2}
  \end{subfigure}
  \caption{\sf The dual function updates (shown by red dots on the blue curve representing the graph of the dual function) in each iteration of Algorithm~PDP using step-sizes of types 1 and 2.}
  \label{fig:step-sizeDual12}
\end{figure}

If we adjust $s_k$ slightly to $s_k=0.4\eta_k+0.6\beta_k$, then we can obtain the solution in just one iteration. We also include the first four iterations of $c$ for both choices of step-size as in \eqref{sk_DSG1} and \eqref{sk_DSG2} in Table~\ref{table:cUpdatesPDP12}.  Note that the calculations have been performed analytically; therefore, the iterations displayed in the table are exact to the number of decimal places shown.
\begin{table}[t!]
\centering
{\small
\begin{tabular}{l| l r | l r } 
  & \multicolumn{2}{c|}{step-size of type 1}& \multicolumn{2}{c}{step-size of type 2} \\
  \cline{2-3} \cline{4-5}
 $k$ & $c_k$ & $q(c_k)$ & $c_k$ & $q(c_k)$\\[1mm]
 \hline &&&& \\[-3mm]
 0 & 0.1 & 0.188333 & 0.1 & 0.188333\\ &&&&\\
 1 & 5.532500 & 1.990893 & 1.6 & 1.193333\\ &&&&\\
 2 & 5.650513 & 1.994910 & 3.1 & 1.649583\\ &&&&\\
 3 & 5.738521 & 1.997512 & 4.6 & 1.918333\\ &&&&\\
 4 & 5.804247 & 1.998403 & 6.1 & 2.000000\\ [1mm]
 \hline
 \end{tabular}
 }
\caption{Updates of $c$ for algorithm PDP under step-sizes of types 1 and 2.}
\label{table:cUpdatesPDP12}
\end{table}

We plot the function iterates $u_k$ in Figure~\ref{fig:uConverge} under the step-size in \eqref{sk_DSG2}.  Here, $u_k$ are the minimizers of the dual function $q(c_k)$ (for $k=0,1,2,3,4$). Although in the initial iteration ($k=0$), $u_0$ seems to be a horizontal line, that is not the case. In fact, the initial function iterate is a linear function with slope $0.1$.  It is interesting to observe that, apart from the initial iteration, all iterates $u_k$, $k=1,2,3,4$,  intersect at the (same) point $\left(1/2,-1\right)$, which can easily be computed analytically: By Corollary~\ref{cor:boundary01_p1} the initial iteration, which corresponds to $c\in\left[0,2/3\right)$, gives $u(t)=ct-2c$, and all other four iterations fall into the case when $c\in\left[2/3,6\right]$, which yields $ u(t)= ct-(c+2)/2$. From the expression of $u$, we trivially conclude that they all cross the point $\left(1/2,-1\right)$.

\begin{figure}[t!]
  \centering
    \includegraphics[width=0.7\textwidth]{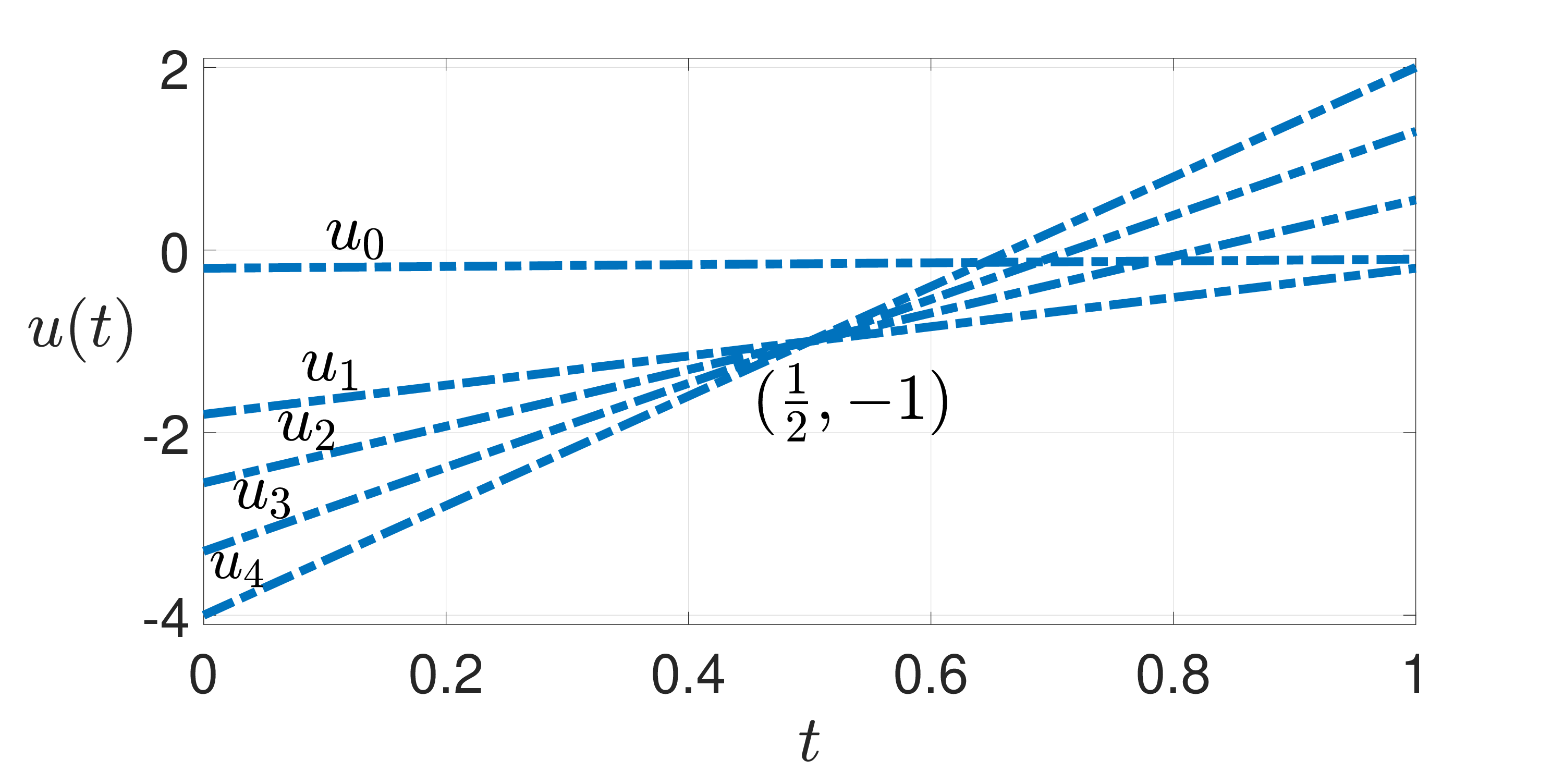}
    \caption{\sf The iterations of $u(t)$ under algorithm PDP with step-size of type 2.}
  \label{fig:uConverge}
\end{figure}

\subsection{Comparison with the Polyak-type step-size}\label{sec: PolyakComparison}

For comparison purposes, we have also implemented two different Polyak-type step-sizes~\cite{Erm1966,Pol1967} in our experiments. See the iterations of the values of the dual function superimposed on the graph of the dual function $q$ vs the penalty parameter $c$ in Figure~\ref{fig:Polyak}. We firstly use the step-size 
\begin{equation}\label{PolyakStep1}
   s_k= \dfrac{\bar q - q_k}{\sigma_k} 
\end{equation}
and plot the updates in Figure~\ref{fig:Polyak1}, where $q_k:=L(u_k,c_k)$ is the current dual value, $\sigma_k:=|h_1(u_k)|+|h_2(u_k)|$, and $\bar q$ is the optimal dual value, which as mentioned above is equal to $2$ for this example. Figure \ref{fig:Polyak2} shows the updates using a second step-size: 
\begin{equation}\label{PolyakStep2}
   s_k= \dfrac{(1+\alpha)\delta}{1+(1+\alpha)^2}\left[\dfrac{\hat q-q_k}{\sigma_k}+\alpha(\hat c-c_k)\right],
\end{equation}
introduced in formula $(21)$ of Reference \cite{BKdsg}, where $\hat q$ is an upper bound of the dual value, taking a value $2.1$ here, $\hat c$ takes $6.5$, and both $\delta$ and $\alpha$ take $1$. 

We observe, arguably for the first time in the literature, that the step-size in \eqref{PolyakStep1}, a ``pure form'' of the Polyak-type step-size, results in the penalty parameter update formula to be a {\em Newton iteration}~(see \cite{NW2006,Fle2013} for more information on Newton's method) in solving the equation $\bar{q} - q_k=0$.  One can verify a well known geometric property of Newton's method on the graph in Figure~\ref{fig:Polyak1}: The solid black lines are tangent to the graph of the dual function $q$ at each iteration $c_k$. Each tangent line intersects the optimal dual value ($2$, here), as expected from a Newton iteration, producing the next iterate $c_{k+1}$.

We consolidate our interpretation of the penalty parameter update as a Newton iteration in the following remark.

\begin{remark}[the Newton method and the PDP Algorithm]\hfill\break
\label{rem:Newton_PDP_P1}\rm
Note that the iterations we observe in Figure~\ref{fig:Polyak1} are reminiscent of the Newton method, applied to finding a zero of the function $q(c) - 2 = 0$.  Recall that the Newton iterations are given by
\[
c^{k+1} = c^k - \frac{q(c^k) - 2}{q'(c^k)}\,,\ \ \ k = 0,1,\ldots\,.
\]
The graph implies that the first derivative $q'(c) = 0$.  To see why this holds, recall that $q(c) = L(\bar{u},c)$, where $\bar{u}$ is the minimizer of $L(\cdot,c)$.  So, we get $q'(c) = dL(\bar{u},c)/dc = |h_1(\bar{u})|+|h_2(\bar{u})| = \sigma(\bar{u})$.  Since $\sigma(\bar{u}) = 0$ at the solution, $q'(c) = 0$ at the solution. Therefore, quadratic convergence cannot be guaranteed by the standard convergence theory of the Newton method, which requires $q'(c) \neq 0$ in a neighbourhood of the solution -- see \cite[Theorem 2.4.3]{DenSch1983}.  This explains the ``sluggish'' (less than quadratic convergence) behaviour of the iterations near the solution $c^* \ge 2$.  We point out that our algorithm finds the solution in $23$ iterations using this step-size.  
\endproof
\end{remark}

\begin{figure}[t!]
  \centering
  \begin{subfigure}{0.49\linewidth}
\centering
  \includegraphics[width=\linewidth]{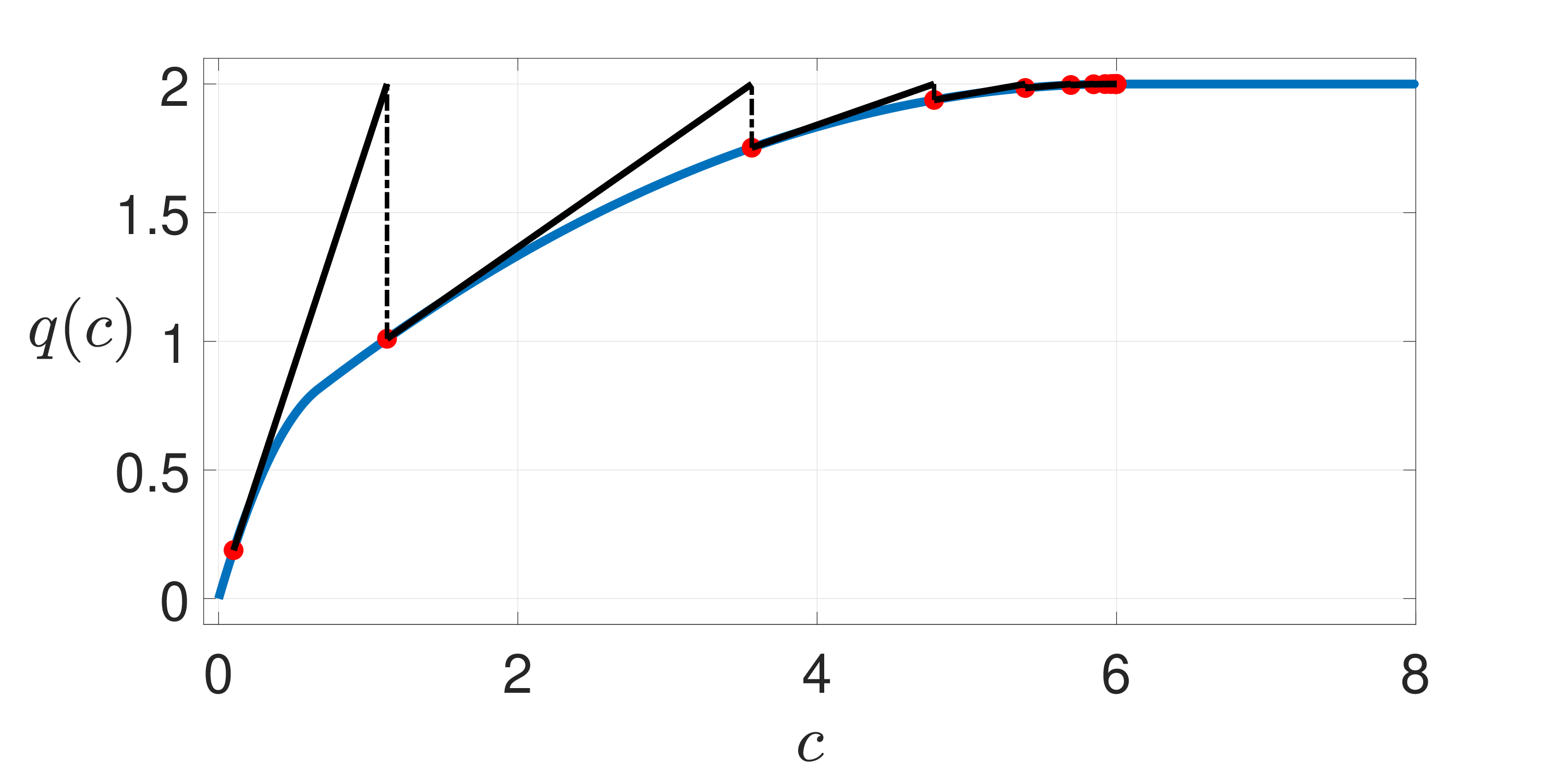}
  \caption{\small Algorithm~PDP with the Polyak-type step-size in~\eqref{PolyakStep1}.}
  \label{fig:Polyak1}
\end{subfigure}
\begin{subfigure}{0.49\linewidth}
 \centering
  \includegraphics[width=\linewidth]{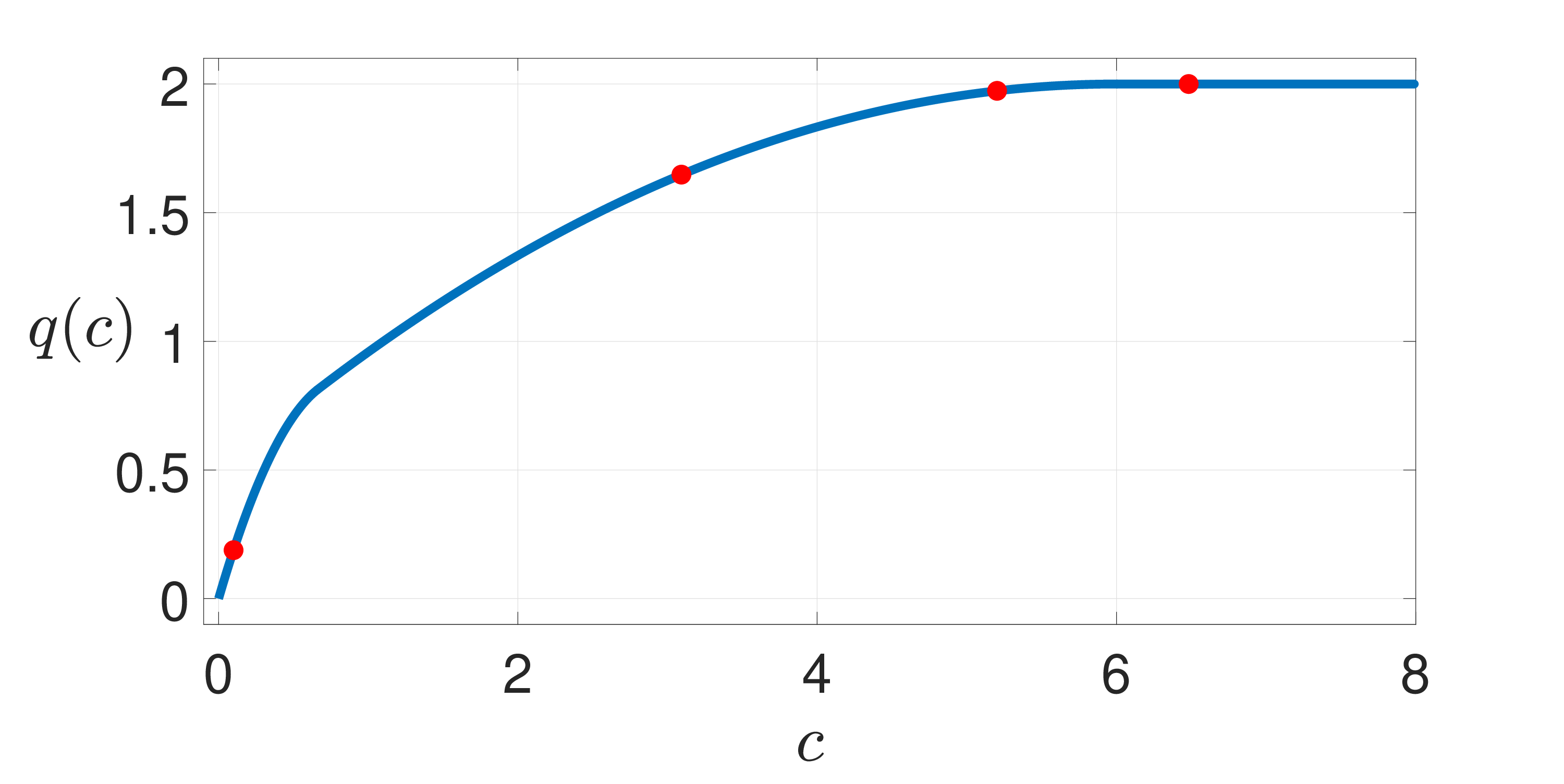}
  \caption{\small Algorithm~PDP with the Polyak-type step-size in~\eqref{PolyakStep2}.}
  \label{fig:Polyak2}
  \end{subfigure}
  \caption{\sf The dual function updates (shown by red dots on the blue curve representing the graph of the dual function) in each iteration of Algorithm~PDP using step-sizes of Polyak type.}
  \label{fig:Polyak}
\end{figure}

\section{Conclusion and Discussion}
We solved the double integrator optimal control problem
via an augmented Lagrangian duality scheme that can be interpreted as a penalty method. To consider all possible instances of the double integrator problem, we divided the problem space into 13 disjoint classes according to the boundary values. For each of these 13 classes, we obtained analytical expressions of (i) the dual function, (ii) the control variable, and the (iii) least exact penalty parameter.

We graphically compared the iterations of the dual sequence using four different choices of step-sizes: \eqref{sk_DSG1}, \eqref{sk_DSG2}, and two choices of Polyak type step-size.

To understand the behaviour of the method, we plotted the trajectory of the constraint functions as a parametric equation involving the penalty parameter $c$. We showed the correspondence between the switchings of the trajectory and the changes of the dual function.

\setcounter{section}{0}

\renewcommand{\thesection}{Appendix}

\renewcommand{\thesection}{A\Alph{section}}
\section*{Appendix} \label{sec:proofs}

\setcounter{equation}{0}
\renewcommand{\theequation}{A.\arabic{equation}}

\textbf{\hypertarget{prf:LEMsubgrad_p1}{Preliminaries to the Proof of Lemma \ref{lem:subgrad_p1}}} \\[1mm]
To establish Lemma \ref{lem:subgrad_p1}, we need {\bf Remark A.1} and {\bf Lemma A.1}, given next.

\begin{remark}[Derivative of a linear functional]
\label{rem:Hilbert}
\rm Recall that the space ${\cal L}^2([0,1];\R)$ is a Hilbert space with scalar product given by
\[
\langle u,u'\rangle= \int_0^1 u(s)\,u'(s) ds,
\]
for every $u,u'\in {\cal L}^2([0,1];\R)$. By~\cite[Theorem 5.5]{Brezis} (Riesz--Frechet representation theorem in Hilbert spaces), for every linear functional $\Psi:{\cal L}^2([0,1];\R) \to \R$, there exists a unique element $w\in {\cal L}^2([0,1];\R)$ such that 
\begin{equation}\label{eq:der}
   \Psi(v)= \langle w,v\rangle= \int_0^1 v(s) w(s) ds,\,\, \forall \,\, v\in {\cal L}^2([0,1];\R).
\end{equation}
Moreover, it is easy to deduce that $w=\dfrac{d \Psi}{d v}\in {\cal L}^2([0,1];\R)$ is the derivative of $\Psi$. This follows directly from observing that
\[
\lim_{v\to 0}\frac{\Psi(u+v)-\Psi(u)-\langle w,v\rangle}{\|v\|}
  =\lim_{v\to 0}\frac{\langle w,u+v\rangle - \langle w,u\rangle-\langle w,v\rangle}{\|v\|}
  =0,
\]
where we used \eqref{eq:der}. The above expression implies that $w$ is the Frechet derivative of $\Psi$.
\end{remark}

\begin{lemma}[Derivative of the double integrator] \label{lem:A1}
Consider the function\hfill\\ $\Lambda_1:{\cal L}^2([0,1];\R)\to \R$ defined by
\[
\Lambda_1(u)=\ds \int_0^1 \left[ \int_0^t u(\tau) d\tau\right] dt, 
\]
where $u\in {\cal L}^2([0,1];\R)$. Define $\gamma:[0,1]\to [0,1]$ as $\gamma(s):=1-s$ for every $s\in [0,1]$.
Then $\Lambda_1$ is linear and we have $\dfrac{d \Lambda_1}{d u} = \gamma$.
\end{lemma}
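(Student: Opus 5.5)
The plan is to apply Fubini's theorem to the iterated integral, turning $\Lambda_1$ into a single integral against a fixed $\mathcal{L}^2$ kernel. Remark~\ref{rem:Hilbert} will then identify that kernel as the derivative.

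\textbf{Linearity.} For $u,u'\in{\cal L}^2([0,1];\R)$ and $a,b\in\R$, the inner integral $\int_0^t (au+bu')(\tau)\,d\tau$ splits by linearity of the Lebesgue integral. The outer integral splits in the same way, so $\Lambda_1(au+bu')=a\Lambda_1(u)+b\Lambda_1(u')$. All integrals are finite because ${\cal L}^2([0,1])\subset{\cal L}^1([0,1])$. Indeed, $|\int_0^t u|\le \|u\|_{{\cal L}^1}\le\|u\|_{{\cal L}^2}$ by Cauchy--Schwarz, so the function $t\mapsto\int_0^t u$ is bounded and continuous, and its outer integral exists.

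\textbf{Rewriting as an inner product.} Write the inner integral as $\int_0^1 \mathbf 1_{\{\tau\le t\}}u(\tau)\,d\tau$. The integrand $(t,\tau)\mapsto \mathbf 1_{\{\tau\le t\}}u(\tau)$ is absolutely integrable on $[0,1]^2$, since its integral in absolute value is at most $\|u\|_{{\cal L}^1}$. Fubini's theorem therefore lets us swap the order of integration:
\[
\Lambda_1(u)=\int_0^1 u(\tau)\left[\int_\tau^1 dt\right]d\tau=\int_0^1 (1-\tau)\,u(\tau)\,d\tau=\langle \gamma,u\rangle .
\]
As a by-product, this gives the bound $|\Lambda_1(u)|\le\|\gamma\|_{{\cal L}^2}\|u\|_{{\cal L}^2}$, so $\Lambda_1$ is also continuous.

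\textbf{Derivative.} Since $\gamma\in{\cal L}^2([0,1];\R)$ and $\Lambda_1(v)=\langle\gamma,v\rangle$ for every $v$, Remark~\ref{rem:Hilbert} applies directly. It shows that $\gamma$ is the Fr\'echet derivative, because $\Lambda_1(u+v)-\Lambda_1(u)-\langle\gamma,v\rangle=0$ identically. Hence $\frac{d\Lambda_1}{du}=\gamma$. Uniqueness of the representing element comes from the Riesz--Fr\'echet part of that remark.

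The only step needing care is the justification of Fubini. It is mild here because the integrand is dominated by $|u(\tau)|$, which is integrable on the unit square. An alternative that avoids measure theory is integration by parts with $U(t)=\int_0^t u$, which is absolutely continuous: $\int_0^1 U(t)\,dt=[tU(t)]_0^1-\int_0^1 t\,u(t)\,dt=\int_0^1(1-t)u(t)\,dt$.
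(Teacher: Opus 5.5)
Your proof is correct and follows the paper's route: you write the inner integral using the indicator of $\{\tau\le t\}$, swap the order of integration to obtain $\Lambda_1(u)=\int_0^1(1-\tau)u(\tau)\,d\tau=\langle\gamma,u\rangle$, and then invoke Remark~\ref{rem:Hilbert}. Your additions are welcome: the explicit justification of Fubini via the ${\cal L}^1$ bound, the observation that $\Lambda_1$ is bounded (which the paper's appeal to Riesz--Fr\'echet tacitly requires), and the integration-by-parts alternative, which is also valid.
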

\begin{proof}
The linearity of $\Lambda_1$ is directly related to the definition and properties of the integral. By Remark \ref{rem:Hilbert}, to prove that $\dfrac{d \Lambda_1}{d u} = \gamma$, it is enough to  show that
\begin{equation}\label{eq:Lem-L1}
    \Lambda_1(u) = \langle u,\gamma \rangle=  \int_0^1 u(s) \gamma(s) ds,\,\, \forall \,\, u\in {\cal L}^2([0,1];\R).
\end{equation}
Indeed, for a fixed $t\in [0,1]$ define ${\mathcal\chi}_{(0,t]}:[0,1]\to \R$, as 
\[\chi_{(0,t]}(\tau):=
  \left\{
    \begin{array}{ll}
      1, \quad &  \tau\le t, \\
      0, \quad &  \tau>t,\\
    \end{array}
  \right. 
  \]
  for every $\tau\in [0,1]$. Similarly, for a fixed $\tau\in [0,1]$, we define
${\mathcal\chi}_{(0,\tau)}:[0,1]\to \R$, as 
\[\chi_{(0,\tau)}(t):=
  \left\{
    \begin{array}{ll}
      1, \quad &  \tau< t, \\
      0, \quad &  \tau\ge t,\\
    \end{array}
  \right. 
  \]
  for every $t\in [0,1]$. It is easy to see that $\chi_{(0,t]}, \chi_{(0,t)}\in {\cal L}^2([0,1];\R)$ (with $\|\chi_{(0,t]}\|_2=\|\chi_{(0,t)}\|_2=\sqrt{t}\le 1$). Using the definitions, we can write
\[
\begin{array}{rcl}
    \Lambda_1(u)   & =  &\ds \int_0^1 \left[ \int_0^t u(\tau) d\tau\right] dt=\int_0^1\int_0^1\chi_{(0,t]}(\tau)u(\tau)d\tau dt\\
    &&\\
    &=&\ds\int_0^1\left[\int_0^1\chi_{(0,t]}(\tau)u(\tau)dt\right] d\tau =\int_0^1u(\tau)\left[\int_0^1\chi_{(0,t]}(\tau)dt\right]d\tau\\
    &&\\
   & =&\ds\int_0^1u(\tau)\left[\int_0^1\left(1-\chi_{(0,\tau)}(t)\right)dt\right] d\tau
    \\
    &&\\
   & =&\ds\int_0^1u(\tau)\left[1-\int_0^\tau 1 dt\right] d\tau =\int_0^1u(\tau)(1-\tau)d\tau,
\end{array}
\]
where we have changed the order of integration in the third equality, and in the fifth equality we used the fact that  $\chi_{(0,t]}(\tau)=1-\chi_{(0,\tau)}(t)$ for every $\tau,t\in [0,1]$. The above expression establishes \eqref{eq:Lem-L1}.
\end{proof}

We can now proceed with the {\bf proof of Lemma} \ref{lem:subgrad_p1}.

\begin{proof} 
 $(i)$  
With the notation of Lemma \ref{lem:A1}, we consider $\Lambda_1(u) = \ds\int_0^1\left[\int_0^t u(\tau)\,d\tau\right]$.  Then
\begin{equation}
 \begin{array}{rcl}
h_1(u)&=&\Lambda_1(u) +r_1\,,\quad h_2(u)=\ds\int_0^1u(t)dt +r_2\,,\quad \varphi(u)=\ds\frac{1}{2}\int_0^1u^2(t)dt\,, \label{test}
\end{array}
\end{equation}
where $r_1,r_2$ are as in \eqref{eq:r1_r2} and $h_1,h_2$ are as in \eqref{eq:h_p1}. 
By \eqref{eq:LagrnCarUnconstr}, the Lagrangian for Problem~(P) is given by 
\begin{equation} \label{eq:LP1}
    L(u,c)=\varphi(u)+c|h_1(u)|+c|h_2(u)|. 
\end{equation}
Since the functions $h_i$ are affine, they are continuously differentiable and convex, so we can apply\linebreak \cite[Proposition~4.5.1 (sum rule)]{Schirotzek2007} to deduce that
\[
\partial_u L(\cdot,c)=\dfrac{d \varphi}{d u}(\cdot)+c\,  \partial \theta_1(\cdot) +c\,\partial \theta_2(\cdot),
\]
where $\theta_1(u):=|h_1(u)|$, and $\theta_2(u):=|h_2(u)|$. Using now the fact that $h_1$ and $h_2$ are affine, \eqref{eq:comp} gives
\[
 \partial_u L(u,c)=\dfrac{d \varphi}{d u}(u)+c\,\alpha_1(u)\,\frac{d h_1}{d u}(u)+c\,\alpha_2(u)\,\frac{d h_2}{d u}(u),
\]
where $\alpha_i(u)={\rm sign}(h_i(u))$ if $h_i(u)\neq 0$ and $\alpha_i(u)\in [-1,1]$ if $h_i(u)= 0$, for $i=1,2$. Direct differentiation yields $ \dfrac{d \varphi}{d u}(u) = u$. We claim now that $\dfrac{d h_2}{d u} (u)= \xi$, where $\xi(s)=1$ for every $s\in [0,1]$. Indeed, using the definition of $\xi$ we can write
\[
h_2(u)=\int_0^1 u(s) ds+r_2=\int_0^1 u(s) \xi(s)ds+r_2.
\]
Note that $h_2$ is an affine function of $u$, with linear part $\lambda(u):= \int_0^1 u(s)ds$. Hence, its derivative w.r.t $u$ will coincide with the derivative w.r.t $u$ of its linear part $\lambda(u)= \int_0^1 u(s)\xi(s)ds$. Now, the claim follows from Remark \ref{rem:Hilbert} for $\Psi:=\lambda$ and $w:=\xi$. To compute $\dfrac{d h_1}{d u}(u)$, we will apply Lemma~\ref{lem:A1}. Indeed, differentiating the first equation in \eqref{test} and using the notation of Lemma~\ref{lem:A1}, we have
\[
\dfrac{d h_1}{d u}= \dfrac{d \Lambda_1}{d u}=\gamma.
\]
Together,
\[
 \partial_u L(u,c)=u+c\,\alpha_1(u)\, \gamma+c\,\alpha_2(u),
\]
establishing \eqref{minimizerLagrnCarUnconstr}. Note that since $\xi\equiv 1$ in $[0,1]$, we do not write $\xi$ in the third term.
\\[2mm]
$(ii)$ Recall that the dual function $q$ is obtained as follows
\[
q(c):=\min_{u\in H} L(u,c),
\]
where $H={\cal L}^2([0,1];\R)$. Since the objective function $\varphi$ is strongly convex on $u$, the minimizer of the Lagrangian is unique, and we denote it as $u_c$. This minimizer has to verify
\[
0\in \partial_u L(u_c,c).
\]
Using now part (i), there exist $(\alpha_i(u))_{i=1,2}$ such that
\[
0=u_c+c\,\alpha_1(u_c)\, \gamma+c\,\alpha_2(u_c).
\]
Solving for $u_c$, and using the definition of $\gamma$, the above expression becomes 
\[
u_c(t)=c\alpha_1(u_c) t-c(\alpha_1(u_c)+\alpha_2(u_c)).
\]
The proof is complete.
\end{proof}

\textbf{\hypertarget{prf:LEMcrSet_disjoint}{Proof of Lemma~\ref{lm:crSet_disjoint}}}

\begin{proof}
Recall the definitions of $V^i$, $i=1,\ldots ,9$, in~\eqref{eq:r1r2cSet}. We will show for each $i$, $i=1,\ldots ,8$ that $V^i\cap V^j =\emptyset$, for $j=i+1,\ldots ,9$. 

$i=1.$ We will show that $V^1\cap V^k=\emptyset$ for $k=2,\ldots,9$. From the definition of $V^1$ in~\eqref{eq:r1r2cSet}, we know that $r_1,r_2>0$, which trivially yields that $V^1$ does not intersect $V^2$, $V^4$ or $V^6$. By the definition of $V^1$, we have $c<\min\left\{\dfrac{6}{5}r_1,\dfrac{2}{3}r_2\right\}$, which implies that $V^1\cap V^j=\emptyset$ for $j=3,5,7,8$. Next, we show that $V^1\cap V^9=\emptyset$. Indeed, assume that $(r_1,r_2,c)\in V^1\cap V^9$. We have from the definition of $V^1$ and $V^9$, the following two simultaneous inequalities
\begin{equation}\label{eq:cr_V1V9}
c<\dfrac{6}{5}r_1\quad \hbox{ and }\quad c\ge\max\{|12r_1-6r_2|,|-6r_1+4r_2|\}.
\end{equation}
We will show that the two inequalities in \eqref{eq:cr_V1V9} contradict each other and thus $V^1\cap V^9=\emptyset$. Since $(r_1,r_2,c)\in V^1\cap V^9$, we have that $(r_1,r_2)$ is in $S_{I}\cup S_{IX}$, $S_{II}$ or $S_{III}$, giving rise to three possible situations. For each of the three situations, we now show that the second inequality in \eqref{eq:cr_V1V9} yields 
\begin{equation}\label{eq:cr_V9_1}
c\ge\dfrac{6}{5}r_1,
\end{equation}
 in contradiction with the first inequality in \eqref{eq:cr_V1V9}. Assume that we are in the first situation, i.e., $(r_1,r_2)\in S_{I}\cup S_{IX}$. The second inequality in \eqref{eq:cr_V1V9} gives $c\ge 12r_1-6r_2$. By the definition of $S_I$ and $S_{IX}$, $r_2\le\dfrac{9}{5}r_1$. The last two inequalities yield 
 \[
 c\ge 12r_1-6r_2\ge 12r_1-6 \left(\dfrac{9}{5}r_1\right)=\dfrac{6}{5}r_1,
 \]
 where we used the fact that $r_1>0$. Hence  \eqref{eq:cr_V9_1} holds, so this situation cannot occur. Assume now that $(r_1,r_2)\in S_{III}$. The second inequality in \eqref{eq:cr_V1V9} implies $c\ge -12r_1+6r_2$. By the definition of $S_{III}$ we must have $r_2>3r_1$. Using the  last two inequalities, we obtain the following.
 \[
 c\ge -12r_1+6r_2> -12r_1+6 \left(3r_1\right)=6r_1\ge \dfrac{6}{5}r_1,
 \]
where we again used the fact that $r_1>0$. Hence \eqref{eq:cr_V9_1} holds, and this case cannot happen. Finally, assume that $(r_1,r_2)\in S_{II}$. The second inequality in \eqref{eq:cr_V1V9} yields $c\ge-6r_1+4r_2$. By the definition of $S_{II}$, we have $r_2>\dfrac{9}{5}r_1$. Using the last two inequalities, we can write 
  \[
 c\ge -6r_1+4r_2 > -6r_1+4 \left(\dfrac{9}{5}r_1\right)= \dfrac{6}{5}r_1,
 \]
 showing again that  \eqref{eq:cr_V9_1} holds. Together, we must have $V^1\cap V^9=\emptyset$.

$i=2.$ Assume that $(r_1,r_2,c)\in V^2$. The definition of $V^2$ trivially yields $V^2\cap V^3=V^2\cap V^5=V^2\cap V^8=\emptyset$. The definition of $V^2$ also implies that $\max\{0,-6r_1\}<c<-2r_2$, which implies that $V^2$ does not intersect $V^4$, $V^6$ or $V^7$. Next, we show that $V^2\cap V^9=\emptyset$. Assume that $(r_1,r_2,c)\in V^2\cap V^9$.   Since $(r_1,r_2,c)\in V^2\cap V^9$, we must have the following two simultaneous inequalities
\begin{equation}\label{eq:cr_V2V9}
c<-2r_2\quad \hbox{ and }\quad c\ge\max\{|12r_1-6r_2|,|-6r_1+4r_2|\}.
\end{equation}
We will show that the two inequalities in \eqref{eq:cr_V2V9} contradict each other. By the definition of $V^2$, we have two possible cases: either $(r_1,r_2)\in S_{VII}$ or  $(r_1,r_2)\in S_{VIII}$. Assume first that $(r_1,r_2)\in S_{VII}$, this implies that $r_1,r_2<0$ and $r_2<3r_1$.
The second inequality in \eqref{eq:cr_V2V9} yields $c\ge 12r_1-6r_2$. Combining the latter inequality with the fact that $r_2<3r_1$, yields $c>-2r_2$, which contradicts the leftmost inequality in \eqref{eq:cr_V2V9}. This shows that we cannot have $(r_1,r_2)\in S_{VII}$. Assume now that $(r_1,r_2)\in S_{VIII}$. This means that $r_1\ge 0$ and $r_2<0$. The first and second inequality in \eqref{eq:cr_V2V9} yield
\[
-2r_2> c\ge 6r_1-4r_2\ge -4r_2, 
\]
which gives $r_2>0$, a contradiction. Hence we have that $V^9\cap V^2=\emptyset$.

$i=3.$ Assume that $(r_1,r_2,c)\in V^3$. By definition of $V^3$, we have that $r_2>0$, $r_2>3r_1$, which trivially leads that none of $V^4$, $V^6$ or $V^7$ intersects $V^3$. By definition of $V^3$, we also have that $\max\{0,6r_1\}<c<2r_2$, which gives the result that $V^5\cap V^3=\emptyset$, and $V^8\cap V^3=\emptyset$. We show next that $V^3\cap V^9=\emptyset$. Assume that $(r_1,r_2,c)\in V^3\cap V^9$. By the definitions of $V^3$ and $V^9$, we have the following two simultaneous inequalities
\begin{equation}\label{eq:cr_V3V9}
c<2r_2\quad \hbox{ and }\quad c\ge\max\{|12r_1-6r_2|,|-6r_1+4r_2|\}.
\end{equation}
We will show that the two inequalities in \eqref{eq:cr_V3V9} contradict each other. The second inequality in \eqref{eq:cr_V3V9} yields $c\ge -12r_1+6r_2$. Combining the latter inequality with the fact that $r_2>3r_1$ yields $c>2r_2$, which contradicts the leftmost inequality in \eqref{eq:cr_V3V9}. Hence we have $V^9\cap V^3=\emptyset$.

$i=4.$ Assume that $(r_1,r_2,c)\in V^4$. By definition of $V^4$, we have $r_1,r_2<0$, which trivially leads that $V^5\cap V^4=\emptyset$. By definition of $V^4$, we also have $c<\min\left\{-\dfrac{6}{5}r_1,-\dfrac{2}{3}r_2\right\}$, which gives the result that $V^4$ does not intersect $V^6$, $V^7$ or $V^8$. Next, we show that $V^4\cap V^9=\emptyset$. Assume that $(r_1,r_2,c)\in V^4\cap V^9$. By the definitions of $V^4$ and $V^9$, we have the following two simultaneous inequalities
\begin{equation}\label{eq:cr_V4V9}
c<-\dfrac{6}{5}r_1\quad \hbox{ and }\quad c\ge\max\{|12r_1-6r_2|,|-6r_1+4r_2|\}.
\end{equation}
We will show that the two inequalities in \eqref{eq:cr_V4V9} contradict each other and thus $V^4\cap V^9=\emptyset$. Since $(r_1,r_2,c)\in V^4\cap V^9$, we have that $(r_1,r_2)$ is in $S_{V}\cup S_{X}$, $S_{VI}$ or $S_{VII}$, giving rise to three possible situations. For each of the three situations, we now show that the second inequality in \eqref{eq:cr_V4V9} yields 
\begin{equation}\label{eq:cr_V9_4}
c\ge-\dfrac{6}{5}r_1,
\end{equation}
 in contradiction with the first inequality in \eqref{eq:cr_V4V9}. Assume first that $(r_1,r_2)\in S_{V}\cup S_{X}$. The second inequality in \eqref{eq:cr_V4V9} gives $c\ge -12r_1+6r_2$. By the definitions of $S_V$ and $S_{X}$, $r_2\ge\dfrac{9}{5}r_1$. The last two inequalities yield \eqref{eq:cr_V9_4}, so this situation cannot happen. Assume now that $(r_1,r_2)\in S_{VII}$. The second inequality in \eqref{eq:cr_V4V9} implies $c\ge 12r_1-6r_2$. By the definition of $S_{VII}$ we must have $r_2<3r_1$. The last two inequalities yield 
 \[
 c\ge 12r_1-6r_2 >-6r_1>-\dfrac{6}{5}r_1,
 \]
 because $r_1<0$. Hence \eqref{eq:cr_V9_4} holds and this second situation cannot occur. Finally, assume that $(r_1,r_2)\in S_{VI}$. The second inequality in \eqref{eq:cr_V4V9} yields $c\ge 6r_1-4r_2$. By the definition of $S_{VI}$, we have $r_2<\dfrac{9}{5}r_1$. Using the last two inequalities, we again deduce
\[
c\ge 6r_1-4r_2> 6r_1-4 \left(\dfrac{9}{5}r_1\right)=-\dfrac{6}{5}r_1,
\]
 showing again that  \eqref{eq:cr_V9_4} holds. Together, we must have $V^4\cap V^9=\emptyset$.

$i=5.$ Assume that $(r_1,r_2,c)\in V^5$. By the definition of $V^5$, we have $r_1,r_2>0$ and $r_2>\dfrac{9}{5}r_1$, which trivially leads to  $V^6\cap V^5=\emptyset$ and $V^7\cap V^5=\emptyset$. We now show $V^8\cap V^5=\emptyset$. Assume that $(r_1,r_2,c)\in V^8\cap V^5$. By the definitions of $V^5$ and $V^8$, we obtain $r_2>3r_1$ and the following two simultaneous inequalities
\begin{equation}\label{eq:cr_V5V8}
c\le 6r_1\quad \hbox{ and }\quad c\ge 2r_2.
\end{equation}
Using $r_2>3r_1$, the second inequality in \eqref{eq:cr_V5V8} yields that $c>6r_1$, which leads to a contradiction to the first inequality in \eqref{eq:cr_V5V8}. Hence $V^8\cap V^5=\emptyset$. Next, we show that $V^5\cap V^9=\emptyset$. Assume that $(r_1,r_2,c)\in V^5\cap V^9$. By the definition of $V^5$ we have $(r_1,r_2)\in S_{II}\cup S_{III}$, giving rise to two possible situations. Assume first that $(r_1,r_2)\in S_{II}$. The definitions of $V^5$, $S_{II}$ and $V^9$ imply the following two simultaneous inequalities.
\begin{equation}\label{eq:cr_V5V9_a}
c<-6r_1+4r_2\quad \hbox{ and }\quad c\ge\max\{|12r_1-6r_2|,|-6r_1+4r_2|\}\ge -6r_1+4r_2,
\end{equation}
and it is clear that the above two inequalities contradict each other. Hence, this situation cannot occur. Assume now that $(r_1,r_2)\in S_{III}$. Similarly, we derive the following simultaneous inequalities.
\begin{equation}\label{eq:cr_V5V9_b}
 c\le 6r_1\quad \hbox{ and }\quad c\ge\max\{|12r_1-6r_2|,|-6r_1+4r_2|\}\ge -12r_1+6r_2,
\end{equation}
By the definitions of $V^5$ and $V^9$, we also obtain $r_2>3r_1$. Combining the latter inequality with  the rightmost inequality in \eqref{eq:cr_V5V9_b} yields $c> 6r_1$, contradicting the first inequality in \eqref{eq:cr_V5V9_b}. Consequently, $V^9\cap V^5=\emptyset$.

$i=6.$ Assume that $(r_1,r_2,c)\in V^6$. By the definition of $V^6$, we have $r_1,r_2<0$ and $r_2<\dfrac{9}{5}r_1$, which trivially leads to $V^6\cap V^8=\emptyset$. We now show $V^7\cap V^6=\emptyset$. Assume that $(r_1,r_2,c)\in V^7\cap V^6$. By the definitions of $V^6$ and $V^7$, we obtain $r_2<3r_1$ and the following two simultaneous inequalities.
\begin{equation}\label{eq:cr_V6V7}
 c\le -6r_1\quad \hbox{ and }\quad c\ge -2r_2.
\end{equation}
Using $r_2<3r_1$, the second inequality in \eqref{eq:cr_V6V7} yields that $c>-6r_1$, which is a contradiction to the first inequality in \eqref{eq:cr_V5V8}. Hence $V^6\cap V^7=\emptyset$. Next, we show that $V^6\cap V^9=\emptyset$. Assume that $(r_1,r_2,c)\in V^6\cap V^9$. By the definition of $V^6$ we have $(r_1,r_2)\in S_{VI}\cup S_{VII}$, giving rise to two possible situations. Assume first that $(r_1,r_2)\in S_{VI}$. The definitions of $V^6$ and $V^9$ imply the following two simultaneous inequalities.
\begin{equation}\label{eq:cr_V6V9_a}
 c< 6r_1-4r_2\quad \hbox{ and }\quad c\ge\max\{|12r_1-6r_2|,|-6r_1+4r_2|\}\ge -6r_1+4r_2.
\end{equation}
The two inequalities above trivially contradict each other. Hence, this situation cannot occur. Assume now that $(r_1,r_2)\in S_{VII}$, which means that $r_2<3r_1$. Since $(r_1,r_2,c)\in V^6\cap V^9$, we derive the following simultaneous inequalities.
\begin{equation}\label{eq:cr_V6V9_b}
 c\le -6r_1\quad \hbox{ and }\quad c\ge\max\{|12r_1-6r_2|,|-6r_1+4r_2|\}\ge 12r_1-6r_2.
\end{equation}
Combining the rightmost inequality above with the fact that $r_2<3r_1$ yields $c> -6r_1$, contradicting the first inequality in \eqref{eq:cr_V6V9_b}, and consequently, $V^9\cap V^6=\emptyset$.

$i=7.$ From the definition of $V^7$ and $V^8$, we trivially have $V^7\cap V^8=\emptyset$. Assume that $(r_1,r_2,c)\in V^7\cap V^9$. By the definition of $V^9$, we have $c\ge\max\{|12r_1-6r_2|,|-6r_1+4r_2|\}\ge 12r_1-6r_2$. Since $V^7$ requires $c<12r_1-6r_2$, we obtain $V^9\cap V^7=\emptyset$.

$i=8.$ Now we show that $V^9\cap V^8=\emptyset$. From the definition of $V^8$, we have $c<-12r_1+6r_2$, which trivially contradicts $c\ge\max\{|12r_1-6r_2|,|-6r_1+4r_2|\}$ given by $V^9$. Hence, we obtain $V^9\cap V^8=\emptyset$.

Together, we have established the claim in Lemma~\ref{lm:crSet_disjoint}.
\end{proof}

\textbf{\hypertarget{prf:th:dualCarUnconstrIFF}{Proof of Theorem~\ref{th:dualCarUnconstrIFF}}}

\begin{proof}
The proof is structured in three steps. Before proceeding with the proof, we briefly explain here each of the steps.
In Step 0, we list some general facts that will be used for each of the cases.
In Step 1, we show for each $i=1,\ldots,9$, that $(iA)\rightarrow(iB)$ and that $(iA)\rightarrow(iC)$. Namely, we will show that the sign vector $({\rm sign}(h_1(u_c)),{\rm sign}(h_2(u_c)))$ determines the control ${u^{(i)}_c}$ minimizing the Lagrangian, as well as the dual function $q^{(i)}(c)$ and the inclusion  $(r_1,r_2,c)\in V^i$.  In Step 2, we will show $(iB)\rightarrow(iA)$ and $(iC)\rightarrow(iA)$.  In this way, we will obtain the equivalence of $(iA), (iB)$ and $(iC)$ for each $i=1,\ldots,9$.
We proceed now with the proof. \\
{\bf Step 0}.
The facts \eqref{eq:ab_alpha} to \eqref{eq:phi_P1}, which we list next, will be used in {\bf Step 1}. Using \eqref{eq:minimizernCarUnconstr} in Lemma~\ref{lem:subgrad_p1}(ii), we have that $u_c(t)=c\alpha_1(u_c) t- c(\alpha_1(u_c)+\alpha_2(u_c))$. To simplify notation, define
\begin{equation}\label{eq:ab_alpha}
    a:=c\alpha_1(u_c), \quad b:=-c\alpha_1(u_c)-c\alpha_2(u_c).
\end{equation}
Hence we can re-write that $u_c(\cdot)$ as
\begin{equation}\label{eq:uabUnconstr}
    u_c(t)=at+b.
\end{equation}
 Substituting these expressions for $u_c$ into $h_1$ and $h_2$, and solving the resulting integrals, we obtain
\begin{equation}\label{eq:h1_ab}
    h_1(u_c) = r_1+ \int_0^1\left[\int_0^t(a\tau+b)d\tau\right]dt
    =\frac{a+3b+6r_1}{6}.
\end{equation}
Similarly,
\begin{equation}\label{eq:h2_ab}
h_2(u_c)= r_2+\int_0^1(at+b)dt=\frac{a+2b+2r_2}{2}.
\end{equation}
Using \eqref{eq:ab_alpha}, we can substitute $a$ and $b$ into \eqref{eq:h1_ab} and \eqref{eq:h2_ab} to obtain the following.
\begin{equation}\label{eq:h_alpha}
\hspace{-3mm}h_1(u_c) = \frac{-2c\alpha_1(u_c)-3c\alpha_2(u_c)+6r_1}{6},\,
h_2(u_c)= \frac{-c\alpha_1(u_c)-2c\alpha_2(u_c)+2r_2}{2}.
\end{equation}
Using the definition of $\varphi$ we calculate $\varphi(u_c)$.
\begin{equation}\label{eq:phi_P1}
\varphi(u_c)=\frac{1}{2}\int_0^1(at+b)^2 dt=\frac{1}{6}a^2 + \frac{1}{2}ab+ \frac{1}{2}b^2.
\end{equation}

{\bf Step 1} We now show that $(iA)$ implies $(iB)$ and $(iC)$ for Cases 1--9. To achieve this, we consider next each of the nine possibilities for ${\rm sign\,}(h_i(u_c))\in \{0,1,-1\}$  for $i=1,2$.

Case 1. $h_1(u_c)>0$ and $h_2(u_c)>0$. 
This fact, combined with \eqref{eq:alfa} yields $\alpha_1(u_c)=\alpha_2(u_c)=1$. By \eqref{eq:ab_alpha} this implies that
$a=c$ and $b=-2c$. The latter, combined with 
\eqref{eq:uabUnconstr} yields $u_c(t)=:u^{(1)}_c(t)=ct-2c$, establishing $(1A)\rightarrow(1B)$.
Let us show now that $(1A)\rightarrow(1C)$. Note that $(1C)$ has two parts: the formula for the dual function and the inclusion $(r_1,r_2,c)\in V^1$. We start by establishing the formula for the dual function.

By $(1A)$, we have that $\alpha_1(u_c)=\alpha_2(u_c)=1$. Substituting these values into the expressions for $h_1$ and $h_2$ in \eqref{eq:h_alpha} and using the assumption on the signs of $h_1(u_c)$ and $h_2(u_c)$ gives
\begin{equation}\label{eq:h1c_case1}
h_1(u_c)=\frac{-5c+6r_1}{6}>0,
\end{equation}
and
\begin{equation}\label{eq:h2c_case1}
h_2(u_c)=\frac{-3c+2r_2}{2}>0.
\end{equation}
 Let us now compute the dual function $q(c)=\min_u L(u,c)=L(u_c,c)$. By $(1A)$, we already have that $a=c$ and $b=-2c$. Combining this fact with \eqref{eq:phi_P1} yields
\[
\varphi(u_c)=\dfrac{1}{6}c^2 + \dfrac{1}{2}c(-2c)+ \dfrac{1}{2}(-2c)^2=\dfrac{7}{6}c^2.
\]
Hence, the dual function $q(c)$ for this case is
\begin{align*}
q(c)&=L(u_c,c)=\varphi(u_c)+c|h_1(u_c)|+c|h_2(u_c)|\\
&\\
&=\dfrac{7}{6}c^2+\dfrac{(-5c^2+6cr_1)}{6}+\dfrac{(-3c^2+2cr_2)}{2}=-\dfrac{7}{6}c^2+(r_1+r_2)c,
\end{align*}
where we used \eqref{eq:h1c_case1} and \eqref{eq:h2c_case1}. By definition of $q^{(1)}$, we have proved that $(1A)$ implies the first part of $(1C)$. We now show that $(1A)$ implies the second part of $(1C)$. The inequalities in \eqref{eq:h1c_case1} and \eqref{eq:h2c_case1} give $ c<\dfrac{6}{5}r_1$ and $c<\dfrac{2}{3}r_2$ respectively. Hence, we have that 
\begin{equation}\label{eq:cr_case1}
c<\min\left\{\dfrac{6}{5}r_1,\dfrac{2}{3}r_2\right\}.
\end{equation}
Using the fact that $c>0$, \eqref{eq:cr_case1} yields that $r_1,\,r_2>0$, and $0<c< \min\left\{\dfrac{6}{5}r_1,\dfrac{2}{3}r_2\right\}$, which establishes the second part of (1C) (namely that $(r_1,r_2,c)\in V^1$). Hence we have shown that (1A) implies (1C). 

 Case 2. $h_1(u_c)>0$ and $h_2(u_c)<0$. By \eqref{eq:alfa} and \eqref{eq:ab_alpha} we have that $\alpha_1(u_c)=1$, $\alpha_2(u_c)=-1$, $a=c$ and $b=0$. By \eqref{eq:uabUnconstr} this implies that $u_c(t)=u^{(2)}_c(t)=ct$, establishing $(2A)\rightarrow(2B)$. Let us show now that $(2A)\rightarrow(2C)$. By $(2A)$, we have that $\alpha_1(u_c)=1$ and $\alpha_2(u_c)=-1$. Substituting these values into the expressions for $h_1$ and $h_2$ in \eqref{eq:h_alpha} and using the assumption on the signs of $h_1(u_c)$ and $h_2(u_c)$ give
\begin{equation}\label{eq:hc_case2}
h_1(u_c)=\frac{c+6r_1}{6}>0,\,\,h_2(u_c)=\frac{c+2r_2}{2}<0,
\end{equation}
We now compute the dual function $q(c)=L(u_c,c)$. From \eqref{eq:phi_P1} and the fact that $a=c$ and $b=0$, we have that
\begin{align*}
q(c)&=L(u_c,c)=\varphi(u_c)+c|h_1(u_c)|+c|h_2(u_c)|\\[2mm]
&=\frac{1}{6}c^2+\frac{c^2+6cr_1}{6}+\frac{-c^2-2cr_2}{2}= -\frac{1}{6}c^2+(r_1-r_2)c,
\end{align*}
where we used \eqref{eq:hc_case2} in the third equality. By definition of $q^{(2)}$, we have proved that $(2A)$ implies the first part of $(2C)$. We now proceed to show that $(2A)$ implies the second statement in $(2C)$ (namely that $(r_1,r_2,c)\in V^2$). Equation~\eqref{eq:hc_case2}  implies that 
\begin{equation}\label{eq:cr_case2}
-6r_1<c<-2r_2.
\end{equation}
Equation \eqref{eq:cr_case2} and the fact $c>0$, imply that $r_2<3r_1$ and $r_2<0$, which is equivalent to the second part of $(2C)$. We have thus proved that $(2A)$ implies $(2C)$. 

 Case 3. $h_1(u_c)<0$ and $h_2(u_c)>0$. In this case $\alpha_1(u_c)=-1$, $\alpha_2(u_c)=1$, so $a=-c$ and $b=0$. By \eqref{eq:uabUnconstr} this implies that $u_c(t)=u^{(3)}_c(t)=-ct$, establishing $(3A)\rightarrow(3B)$. Now, let us show that $(3A)\rightarrow(3C)$. We start by computing the dual function. By $(3A)$, we have that $\alpha_1(u_c)=-1$ and $\alpha_2(u_c)=1$. Substituting these values into the expressions for $h_1$ and $h_2$ in \eqref{eq:h_alpha} and using the assumption on the signs of $h_1(u_c)$ and $h_2(u_c)$ give
  \begin{equation}\label{eq:hc_case3}
h_1(u_c)=\frac{-c+6r_1}{6}<0,\,\,h_2(u_c)=\frac{-c+2r_2}{2}>0.
\end{equation}
 From \eqref{eq:phi_P1} and the fact that $a=-c$ and $b=0$, we have
\begin{align*}
q(c)&=L(u_c,c)=\varphi(u_c)+c|h_1(u_c)|+c|h_2(u_c)|=\frac{1}{6}c^2+\frac{c^2-6cr_1}{6}+\frac{-c^2+2cr_2}{2}\\[2mm]
&=-\frac{1}{6}c^2+(-r_1+r_2)c,
\end{align*}
where we used \eqref{eq:hc_case3} in the third equality. Hence we have established the first part of $(3C)$. Let us now show that the second part of $(3C)$ also follows from $(3A)$. Indeed, by \eqref{eq:hc_case3} we deduce that 
\begin{equation}\label{eq:cr_case3}
6r_1<c<2r_2.
\end{equation}
Since $c>0$, we obtain that $r_2>3r_1$ and $r_2>0$, which is equivalent to the second part of $(3C)$.

 Case 4. $h_1(u_c)<0$ and $h_2(u_c)<0$. In this case $\alpha_1(u_c)=\alpha_2(u_c)=-1$, so $a=-c$ and $b=2c$. By \eqref{eq:uabUnconstr} this implies that $u_c(t)=u^{(4)}_c(t)=-ct+2c$, establishing $(4A)\rightarrow(4B)$. Let us show now that $(4A)\rightarrow(4C)$. We start by computing the dual function. By $(4A)$, we have that $\alpha_1(u_c)=\alpha_2(u_c)=-1$. Substituting these values into the expressions for $h_1$ and $h_2$ in \eqref{eq:h_alpha} and using the assumption on the signs of $h_1(u_c)$ and $h_2(u_c)$ give
 \begin{equation}\label{eq:hc_case4}
h_1(u_c)=\frac{5c+6r_1}{6}<0,\,\,h_2(u_c)=\frac{3c+2r_2}{2}<0.
\end{equation}
 From \eqref{eq:phi_P1} and the fact that $a=-c$ and $b=2c$ we obtain
\begin{align*}
q(c)&=\varphi(u_c)+c|h_1(u_c)|+c|h_2(u_c)|\\[2mm]
&=\frac{7}{6}c^2+\frac{-5c^2-6cr_1}{6}+\frac{-3c^2-2cr_2}{2}=-\frac{7}{6}c^2-(r_1+r_2)c,
\end{align*}
so the first part of $(4C)$ is established. We now show that $(4A)$ implies the second part of $(4C)$. From the two inequalities in \eqref{eq:hc_case4}, we obtain that 
\begin{equation}\label{eq:cr_case4}
c<\min\left\{-\dfrac{6}{5}r_1,-\dfrac{2}{3}r_2\right\}.
\end{equation}
Since $c>0$, we have that $r_1,\,r_2<0$, and $0<c<\min\left\{-\dfrac{6}{5}r_1,-\dfrac{2}{3}r_2\right\}$. Hence we showed that $(4A)$ also implies the second part of $(4C)$.

Case 5. $h_1(u_c)=0$ and $h_2(u_c)>0$. In this case, $\alpha_2(u_c)=1$, and $\alpha_1(u_c)\in [-1,1]$ (note that $\alpha_1(u_c)$ can be any element in $[-1,1]$). Using also \eqref{eq:ab_alpha} we can write
\begin{equation}\label{eq:ab_case5}
a=c\alpha_1(u_c), \quad b=-c\alpha_1(u_c)-c,
\end{equation}
which, combined with \eqref{eq:h_alpha} gives
\begin{equation}\label{eq:h1_case5}
    h_1(u_c)=\frac{-2c\alpha_1(u_c)-3c+6r_1}{6}=0.
\end{equation}
Altogether, we deduce that  
\begin{equation}\label{eq:c_alpha1_case5}
     a=c\alpha_1(u_c)=\dfrac{-3c+6r_1}{2}.
\end{equation}
The expression above and \eqref{eq:ab_case5} yield $u_c(t)=u^{(5)}_c(t)=\dfrac{-3c+6r_1}{2}t+\dfrac{c-6r_1}{2}$, establishing $(5A)\rightarrow(5B)$.
To show the first part of $(5C)$, we compute $h_2(u_c)$. By \eqref{eq:h_alpha} we have
\begin{equation}\label{eq:h2_case6}
h_2(u_c)=\dfrac{-c\alpha_1(u_c)-2c+2r_2}{2}=\dfrac{-c-6r_1+4r_2}{4}>0,
\end{equation}
where we used \eqref{eq:c_alpha1_case5} in the second equality. To show that $q(c)=q^{(5)}(c)$, we compute first $\varphi(u_c)$ using \eqref{eq:ab_case5} and \eqref{eq:phi_P1} as follows,
\begin{align*}
\varphi(u_c)&    =\frac{1}{6}(c\alpha_1(u_c))^2 + \frac{1}{2}(c\alpha_1(u_c))(-c\alpha_1(u_c)-c)+ \frac{1}{2}(-c\alpha_1(u_c)-c)^2\\[2mm]
  & =\frac{1}{6}c^2\alpha_1^2(u_c) + \frac{1}{2}c^2\alpha_1(u_c) + \frac{1}{2}c^2\\[2mm]
  & =\frac{1}{6} \left(\frac{-3c+6r_1}{2}\right)^2
    + \frac{1}{2} \frac{c(-3c+6r_1)}{2} + \frac{1}{2} c^2 =\frac{1}{8}c^2+\frac{3}{2}r_1^2,
\end{align*}
where we are using \eqref{eq:c_alpha1_case5} in the third equality. Now we are ready to compute $q(c)$.
By assumption we have that $c|h_1(u_c)|=0$, and we also have that
\[
c|h_2(u_c)|=\frac{-c^2-6cr_1+4cr_2}{4}.
\]
Altogether, we can write
\[
\begin{array}{rcl}
 q(c)=L(u_c,c)    &  = &\varphi(u_c)+c|h_2(u_c)|=\dfrac{1}{8}c^2+\dfrac{3}{2}r_1^2 +\dfrac{-c^2-6cr_1+4cr_2}{4}\\[2mm]
     &= &-\dfrac{1}{8}c^2+(-\dfrac{3}{2}r_1+r_2)c+\dfrac{3}{2}r_1^2=q^{(5)}(c),
\end{array}
  \]
 which establishes the first part of $(5C)$. We now proceed to prove that $(5A)$ implies the second statement in $(5C)$. Since $\alpha_1(u_c)\in[-1,1]$, $3+2\alpha_1(u_c)\ge 1> 0$. Equation~\eqref{eq:c_alpha1_case5} yields
\begin{equation}\label{eq:cr_case5_1}
c= \dfrac{6r_1}{3+2\alpha_1(u_c)}.
\end{equation}
From \eqref{eq:h2_case6} we derive
that 
\begin{equation}\label{eq:cr_case5_2}
c \in(0,-6r_1+4r_2). \quad \mbox{Consequently,}\quad r_2>\dfrac{3}{2}r_1.
\end{equation}
Using the fact that $\alpha_1(u_c)\in[-1,1]$, we obtain the range of $c$ from Equation \eqref{eq:cr_case5_1}. That is, we deduce that
\begin{equation}\label{eq:cr_case5_3}
c \in\left[\dfrac{6}{5}r_1,6r_1\right] \quad\mbox{and}\quad r_1>0.
\end{equation}
Since $r_1$, $r_2$ satisfy both \eqref{eq:cr_case5_3} and \eqref{eq:cr_case5_2}, we obtain $r_1,r_2>0$. Noting that $c$ satisfies both \eqref{eq:cr_case5_3} and \eqref{eq:cr_case5_2}, we have 
\begin{equation}\label{eq:cr_case5_4}
    c\in (0,-6r_1+4r_2)\cap \left[\dfrac{6}{5}r_1,6r_1\right].
\end{equation}
For the intersection in \eqref{eq:cr_case5_4} to be nonempty, one must have $\dfrac{6}{5}r_1<-6r_1+4r_2$, which yields $r_2>\dfrac{9}{5}r_1$. We now consider the lower bound of $c$ from the intersection in \eqref{eq:cr_case5_4}. Since $\dfrac{6}{5}r_1>0$, we have  $c\ge\dfrac{6}{5}r_1$. For the upper bound of $c$ from \eqref{eq:cr_case5_4}, we have two possibilities: 
\[
 \mbox{Either }-6r_1+4r_2\le 6r_1, \,\, \mbox{yielding} \,\, r_2\le 3r_1; \hbox{ or } \,
  -6r_1+4r_2> 6r_1, \,\, \mbox{yielding}\,\, r_2> 3r_1.
\]
Together, these two complementary cases and \eqref{eq:cr_case5_4} yield
\[
\dfrac{6}{5}r_1\le c < -6r_1+4r_2, \,\, \mbox{for} \,\,  \dfrac{9}{5}r_1<r_2\le 3r_1;\hbox{ or } \quad
\dfrac{6}{5}r_1\le c \le 6r_1, \,\, \mbox{for} \,\,  r_2> 3r_1,
\]
where we also include the  lower bound in the result. By definition of $V^5$, the above expressions show that $(5A)$ implies the second part of $(5C)$.
    
Case 6.  $h_1(u_c)=0$ and $h_2(u_c)<0$.
In this case, $\alpha_2(u_c)=-1$, and
\begin{equation}\label{eq:ab_case6}
a=c\alpha_1(u_c), \quad b=-c\alpha_1(u_c)+c.
\end{equation}
Proceeding as in the previous case, we obtain
\begin{equation}\label{eq:hc_case6}
h_1(u_c)=\frac{-2c\alpha_1(u_c)+3c+6r_1}{6}=0, \quad h_2(u_c)=\frac{c-6r_1+4r_2}{4}<0.
\end{equation}
Using the equality above involving $h_1$, we can write 
 \begin{equation}\label{eq:cr_case6_1}
 c=\dfrac{6r_1}{-3+2\alpha_1(u_c)} \quad \mbox{and} \quad a= c\alpha_1(u_c)=\dfrac{6r_1+3c}{2}.
 \end{equation} 
The second equality above, together with \eqref{eq:ab_case6} gives 
\[
u_c(t)=at+b= \dfrac{6r_1+3c}{2} t -\dfrac{(6r_1+c)}{2}=u^{(6)}_c(t),
 \]
 establishing $(6B)$. We proceed now to show $(6A)\rightarrow(6C)$. To derive $q(c)$, we compute first $\varphi(u_c)$, using \eqref{eq:ab_case6} and \eqref{eq:phi_P1} as follows,
\begin{align*}
\varphi(u_c)&=\frac{1}{6}(c\alpha_1(u_c))^2 + \frac{1}{2}(c\alpha_1(u_c))(-c\alpha_1(u_c)+c)+ \frac{1}{2}(-c\alpha_1(u_c)+c)^2\\
      &=\frac{1}{6}c^2\alpha_1^2(u_c) - \frac{1}{2}c^2\alpha_1(u_c) + \frac{1}{2}c^2
    =\frac{1}{8}c^2+\frac{3}{2}r_1^2,
\end{align*}
where \eqref{eq:cr_case6_1} is used in the last equation. We now proceed to the computation of $q(c)$. The assumption on $h_1$ gives
$c|h_1(u_c)|=0$, and by \eqref{eq:hc_case6}, $c|h_2(u_c)|=\dfrac{-c^2+6cr_1-4cr_2}{4}$. Altogether, we can write,
\[
\begin{array}{rcl}
  q(c)=L(u_c,c)    &  = &\varphi(u_c)+c|h_2(u_c)|=\dfrac{1}{8}c^2+\dfrac{3}{2}r_1^2 +\dfrac{-c^2+6cr_1+4cr_2}{4}\\
 &&\\
     &= &-\dfrac{1}{8}c^2+\left(\dfrac{3}{2}r_1-r_2\right)c+\dfrac{3}{2}r_1^2=q^{(6)}(c),
\end{array}
  \]
which establishes the first part of $(6C)$. We now proceed to show that $(6A)$ implies the second part of  $(6C)$ (namely that $(r_1,r_2,c)\in V^6$). The expression for $h_2(u_c)$ in \eqref{eq:hc_case6} yields 
\begin{equation}\label{eq:cr_case6_2}
     c\in(0,6r_1-4r_2).
\end{equation}
Using the fact that $\alpha_1(u_c)\in[-1,1]$ and the first equation in \eqref{eq:cr_case6_1} we obtain
 \begin{equation}\label{eq:cr_case6_3}
     r_1<0,\,\mbox{and}\, c\in\left[-\frac{6}{5}r_1, -6r_1\right].
 \end{equation}
We derive that $r_1,r_2<0$ from $6r_1-4r_2>0$ in \eqref{eq:cr_case6_2} and $r_1<0$ in \eqref{eq:cr_case6_3}. For the range of $c$, using \eqref{eq:cr_case6_2} and \eqref{eq:cr_case6_3}, we further obtain the following.
\begin{equation}\label{eq:cr_case6_4}
     c\in(0,6r_1-4r_2)\cap\left[-\frac{6}{5}r_1, -6r_1\right] .
\end{equation}
The non-emptiness of the intersection in \eqref{eq:cr_case6_4} requires $-\dfrac{6}{5}r_1<6r_1-4r_2$, which leads $r_2<\dfrac{9}{5}r_1$. Since $-\dfrac{6}{5}r_1>0$, we have from the lower bound of the intersection that $c\ge-\dfrac{6}{5}r_1$. As in the previous case, for the upper bound of the intersection we consider two possibilities.
\[
\mbox{Either }6r_1-4r_2\le -6r_1, \,\, \mbox{yielding} \,\, r_2\ge 3r_1; \hbox{ or } \,
  6r_1-4r_2> -6r_1, \,\, \mbox{yielding}\,\, r_2< 3r_1.
\]
In the rightmost case, we deduce that  $-\dfrac{6}{5}r_1 \le c < 6r_1-4r_2$ for $3r_1\le r_2<\dfrac{9}{5}r_1$. In the leftmost case we obtain $-\dfrac{6}{5}r_1 \le c \le -6r_1$ for $r_2< 3r_1$. By definition of $V^6$, we have established that $(6A)$ implies $(6C)$.

Case 7. $h_1(u_c)>0$ and $h_2(u_c)=0$. In this case, $-1\le\alpha_2(u_c)\le1$, $\alpha_1(u_c)=1$ and
\begin{equation}\label{eq:ab_case7}
    a=c, \quad b=-c-c\alpha_2(u_c).
\end{equation}
From \eqref{eq:h_alpha} and the fact that $h_2(u_c)=0$, we obtain
\begin{equation} \label{eq:h2_case7}
h_2(u_c)=\frac{-c-2c\alpha_2(u_c)+2r_2}{2}=0,
\end{equation}
which yields
\begin{equation} \label{eq:cr_case7_1}
    c(1+2\alpha_2(u_c))=2r_2 \quad \mbox{and} \quad
    c\alpha_2(u_c)=\dfrac{-c+2r_2}{2}.
\end{equation}
From \eqref{eq:ab_case7} and the second equality above, we deduce that $a=c$ and $b=-(c/2)-r_2$ which trivially yields 
\[
u_c(t)=at+b= c t -\dfrac{(2r_2+c)}{2}=u^{(7)}_c(t),
 \]
 establishing $(7B)$. We proceed now to show $(7A)\rightarrow(7C)$. We start with the computation of the dual function. Using \eqref{eq:h_alpha} and the expression for $c\alpha_2(u_c)$ in \eqref{eq:cr_case7_1} we obtain
\begin{equation} \label{eq:h1_case7}
  h_1(u_c)=\dfrac{-c+12r_1-6r_2}{12}>0.
\end{equation}
We compute $q(c)$ by deriving $\varphi(u_c)$ as follows. Using \eqref{eq:phi_P1}, \eqref{eq:ab_case7} and \eqref{eq:cr_case7_1}, we have
 \begin{equation} \label{eq:phi_case7}
  \begin{array}{rcl}
     \varphi(u_c) & = & \dfrac{1}{6}a^2 + \dfrac{1}{2}ab+ \dfrac{1}{2}b^2=\dfrac{1}{6}c^2 + \dfrac{1}{2}c(-c-c\alpha_2(u_c))+ \dfrac{1}{2}(-c-c\alpha_2(u_c))^2\\
     & &\\
     & = & \dfrac{1}{24}c^2+\dfrac{1}{2}r_2^2.
     \end{array}
 \end{equation}
Expression \eqref{eq:h1_case7} and the assumption on $h_2(u_c)$ give $c|h_1(u_c)|=-\dfrac{1}{12}c^2+c(r_1-\dfrac{1}{2}r_2)$ and $c|h_2(u_c)|=0$, respectively. Hence, we can write
 \[
 q(c)=L(u_c,c)=\varphi(u_c)+c|h_1(u_c)|=-\frac{1}{24}c^2+\left(r_1-\frac{1}{2}r_2\right)c+\frac{1}{2}r_2^2,
 \]
thus establishing the first part of $(7C)$. We now show that  $(7A)$ implies the second part of $(7C)$. From \eqref{eq:h1_case7}, we obtain 
\begin{equation}\label{eq:cr_case7_2}
      0<c<12r_1-6r_2.
 \end{equation}
We now compute the range of $c$ from the first equation in \eqref{eq:cr_case7_1}. We proceed with our calculation by considering three complementary cases according to the possible values of $\alpha_2(u_c)$. When $\alpha_2(u_c)\neq-\dfrac{1}{2}$, \eqref{eq:cr_case7_1} yields
\begin{equation}\label{cr_case7_3}
    c=\dfrac{2r_2}{1+2\alpha_2(u_c)}.
\end{equation}
If $-1\le\alpha_2(u_c)<-\dfrac{1}{2}$, then $-1\le 1+2\alpha_2(u_c)<0$. Since $c>0$, \eqref{cr_case7_3} yields that $r_2<0$. Using the lower bound of the denominator in \eqref{cr_case7_3} gives $c\ge-2r_2$. Using now the upper bound for $c$ in \eqref{eq:cr_case7_2} we have that $c<12r_1-6r_2$. Combining the last two inequalities for $c$ yield $-2r_2<12r_1-6r_2$. Therefore, $r_2<3r_1$ and $-2r_2\le c<12r_1-6r_2$, which establishes the first condition in the definition of $V^7$. We proceed now to establish the two remaining conditions.

If $-\dfrac{1}{2}<\alpha_2(u_c)\le 1$, then $0<1+2\alpha_2(u_c)\le 3$. Since $c>0$, \eqref{cr_case7_3} yields that $r_2>0$ and $c\ge\dfrac{2}{3}r_2$. Since $c$ also satisfies \eqref{eq:cr_case7_2}, we have $\dfrac{2}{3}r_2<12r_1-6r_2$, which leads to $r_2<\dfrac{9}{5}r_1$. Altogether, we obtain that $r_1>0$ from $r_2>0$; and $\dfrac{2}{3}r_2\le c<12r_1-6r_2$. Hence we have shown the second condition in the definition of $V^7$.

When $\alpha_2(u_c)=-\dfrac{1}{2}$, using the first equation in \eqref{eq:cr_case7_1}, we derive $r_2=0$. From \eqref{eq:cr_case7_2}, we further have $r_1>0$. Thus we proved the last condition in the definition of $V^7$. Altogether, we have shown that $(7A)\rightarrow(7C)$.

Case 8.      $h_1(u_c)<0$ and $h_2(u_c)=0$. In this case, $-1\le\alpha_2(u_c)\le1$, $\alpha_1(u_c)=-1$, and
      \begin{equation}\label{eq:ab_case8}
          a=-c, \quad b=c-c\alpha_2(u_c).
      \end{equation}
Using the assumption on $h_2(u_c)$, as well as \eqref{eq:h_alpha}, we obtain
\begin{equation}\label{eq:h2_case8a}
h_2(u_c)=\frac{c-2c\alpha_2(u_c)+2r_2}{2}=0,
\end{equation}
which further gives us
  \begin{equation} \label{eq:cr_case8_1}
      c(-1+2\alpha_2(u_c))=2r_2 \quad \mbox{and} \quad c\alpha_2(u_c)=\dfrac{c+2r_2}{2}.
 \end{equation}
The second equality above, together with \eqref{eq:ab_case8} yield $a=-c$ and $b=(c/2)-r_2$. Hence,
\[
u_c(t)=at+b=-ct +(c/2)-r_2= u^{(8)}_c(t),
 \]
 establishing $(8B)$.
We now show that $(8A)\rightarrow (8C)$. 
From $h_1(u_c)<0$ and \eqref{eq:h_alpha}, we have
\begin{equation}\label{eq:h2_case8}
h_1(u_c)=\frac{c+12r_1-6r_2}{12}<0. 
\end{equation}
 We now proceed to the computation of $\varphi(u_c)$ in the purpose of deriving $q(c)$. The expressions for $\varphi(u_c)$ in \eqref{eq:phi_P1} and for $c\alpha_2(u_c)$ in \eqref{eq:cr_case8_1} result in
  \begin{equation}\label{eq:L3C8}
      \varphi(u_c)=\frac{1}{6}(-c)^2 + \frac{1}{2}(-c)(c-c\alpha_2(u_c))+ \frac{1}{2}(c-c\alpha_2(u_c))^2
 = \frac{1}{24}c^2+\frac{1}{2}r_2^2.  
  \end{equation}
 Using the facts that $c|h_2(u_c)|=0$ and $c|h_1(u_c)|=-\dfrac{1}{12}c^2+c\left(\dfrac{1}{2}r_2-r_1\right)$ and \eqref{eq:L3C8}, obtain the following.
 \[
 q(c)=L(u_c,c)=\varphi(u_c)+c|h_1(u_c)|=-\frac{1}{24}c^2+\left(\frac{1}{2}r_2-r_1\right)c+\frac{1}{2}r_2^2=q^{(8)}(c),
 \]
 which establishes the first part of $(8C)$. We now proceed to show the second part of $(8C)$. The inequality in \eqref{eq:h2_case8} gives us
  \begin{equation} \label{eq:cr_case8_2}
      0<c<-12r_1+6r_2.
 \end{equation}
  In a similar manner as in Case 7, and according to three complementary possible values for $\alpha_2(u_c)$, we will obtain $(8C)$. When $\alpha_2(u_c)\neq\dfrac{1}{2}$, we derive from \eqref{eq:cr_case8_1} that
   \begin{equation} \label{eq:cr_case8_3}
      c=\dfrac{2r_2}{-1+2\alpha_2(u_c)}.
 \end{equation}
If $\dfrac{1}{2}<\alpha_2(u_c)\le1$, then $1\ge -1+2\alpha_2(u_c)>0$. Using $c>0$, \eqref{eq:cr_case8_3} yields $r_2>0$ and $c\ge2r_2$, which gives a lower bound for $c$. Combining with the upper bound of $c$ in \eqref{eq:cr_case8_2}, we obtain $2r_2<-12r_1+6r_2$, leading to $r_2>3r_1$ and $2r_2\le c<-12r_1+6r_2$. Hence we have shown the first condition in the definition of $V^8$.
 
If $-1\le\alpha_2(u_c)<\dfrac{1}{2}$, then $-3\le -1+2\alpha_2(u_c)<0$. Since $c>0$, \eqref{eq:cr_case8_3} yields $r_2<0$ and $c\ge-\dfrac{2}{3}r_2$, which gives a lower bound for $c$. Combining with the upper bound of $c$ in \eqref{eq:cr_case8_2}, we obtain that $-\dfrac{2}{3}r_2<-12r_1+6r_2$, which gives $r_2>\dfrac{9}{5}r_1$, $r_1<0$ and $-\dfrac{2}{3}r_2 \le c<-12r_1+6r_2$. Hence the second condition in the definition of $V^8$ is established.

When $\alpha_2(u_c)=\dfrac{1}{2}$, the first equation in \eqref{eq:cr_case8_1} gives $r_2=0$. Combining this fact with \eqref{eq:cr_case8_2}, we have $r_1<0$. Hence we have established the last condition in the definition of $V^8$. Altogether, we have proved that $(8A)\rightarrow(8C)$.

 Case 9.  $h_1(u_c)=0$ and $h_2(u_c)=0$. This assumption, together with \eqref{eq:h1_ab} and \eqref{eq:h2_ab} gives
 \begin{equation}\label{eq:h1_case9}
h_1(u_c)=\frac{a+3b+6r_1}{6}=0,\,\,h_2(u_c)=\frac{a+2b+2r_2}{2}=0.
\end{equation}
Hence,
\begin{equation}\label{eq:a&b_case9}
    a=12r_1-6r_2,\, \, b=-6r_1+2r_2.
\end{equation}
This readily gives $u_c(t)=u^{(9)}_c(t)$, establishing $(9B)$. Next, we compute $q$. Combining \eqref{eq:a&b_case9} with \eqref{eq:phi_P1} gives
\begin{align*}
q(c) & = L(u_c,c)=\varphi(u_c)\\
&\\
 & =\frac{1}{6}(12r_1-6r_2)^2 + \frac{1}{2}(12r_1-6r_2)(-6r_1+2r_2)+ \frac{1}{2}(-6r_1+2r_2)^2\\
&\\
   &=6r_1^2-6r_1r_2+2r_2^2,
\end{align*}
 where we used the fact that $h_1(u_c)=h_2(u_c)=0$ in the second equality. Note that in this case $q$ is constant, since it does not depend on $c$. Thus $(9A)$ implies the first part of $(9C)$. Let us now show that $(9A)$ implies the second part of $(9C)$. Assumption $(9A)$ gives $a=c\alpha_1(u_c)$ and $b=-c(\alpha_1(u_c)+\alpha_2(u_c))$ with $\alpha_i(u_c)\in [-1,1]$ for $i=1,2$. Since $a=12r_1-6r_2=c\alpha_1(u_c)$ and $\alpha_1(u_c)\in [-1,1]$, we can write
\[
|a|=|12r_1-6r_2 | =|c\alpha_1(u_c)|= c|\alpha_1(u_c)|\le c,
\]
so that $c\ge|12r_1-6r_2 |$. Similarly, elementary calculations give the following.
\[
b=-6r_1+2r_2=-c(\alpha_1(u_c)+\alpha_2(u_c))=-a -c\alpha_2(u_c)= -(12r_1-6 r_2)-c\alpha_2(u_c) ,
\]
so $c\alpha_2(u_c)= -6 r_1+4r_2$. The latter yields
\[
|-6 r_1+4r_2|=|c\alpha_2(u_c)|= c|\alpha_2(u_c)|\le c,
\]
so $c\ge|-6 r_1+4r_2 |$. Together, we deduce that
\begin{equation}\label{eq:cr_case9}
    c\ge\max\{|12r_1-6r_2|,|-6r_1+4r_2|\}.
\end{equation}
 Hence we have shown that $(9A)$ implies the second part of $(9C)$. This completes {\bf Step 1}. That is, we have shown that $(iA)$ implies $(iB)$ and $(iC)$ for $i=1,\ldots,9$.\\ 
 {\bf Step 2}  We will show in this step that $(iB)$ implies $(iA)$, and that $(iC)$ implies $(iA)$ for all $i=1,\ldots ,9$. 
Let us show that $(iB)\rightarrow (iA)$. For every $j\in \{1,\ldots,9\}$, 
denote by $(a^{(j)},b^{(j)})$ the vector of coefficients of the control function $u^{(j)}_c$ computed in Case $j$ by means of \eqref{eq:ab_alpha} and \eqref{eq:uabUnconstr}. By inspecting the proofs of all cases, we can easily check that $(a^{(j)},b^{(j)})\neq (a^{(s)},b^{(s)})$ whenever $j\neq s$, $j,s\in \{1,\ldots,9\}$. Assume that $(iB)$ holds. Since one and only one of the cases $(jA)$ holds, assume that $(kA)$ holds. We will show that $k=i$. Indeed, by the first part of the theorem we have
\[
 (kA) \rightarrow (kB).
\]
Since $(kA)$ holds, we must have $(kB)$ holds. So, $(iB)$ and $(kB)$ hold simultaneously. This implies that
$(a^{(i)},b^{(i)})= (a^{(k)},b^{(k)})$. But the latter equality can only occur if $k=i$. We have thus shown that $(iB)\rightarrow (iA)$.
 
We proceed to show that $(iC)$ implies $(iA)$.  Note again that at least one of the cases $(kA)$ must hold, since they cover all possibilities. So assume that $(iC)$ holds and let $(kA)$ hold for some $k\in \{1,\ldots,9\}$. We will show that $k=i$.
By Step 1, we have
\[
(kA) \rightarrow (kC),
\]
This means that both $(iC)$ and $(kC)$ hold simultaneously. Using the second parts of these statements, we deduce that
\[
(r_1,r_2,c)\in V^i \cap V^k,
\]
which yields $V^i \cap V^k\not=\emptyset$. According to Lemma~\ref{lm:crSet_disjoint}, this can only happen when $i=k$. Thus, we have proved that $(iC)\rightarrow(iA)$. The proof of the equivalence between the three statements is complete.
\end{proof}

\end{document}